\def\1{\mathbbm{1}}
\newtheorem{theorem}{Theorem}[section]
\newtheorem{lemma}{Lemma}[section]
\newtheorem{remark}{Remark}[section]
\newtheorem{definition}{Definition}[section]
\newtheorem{proposition}{Proposition}[section]
\numberwithin{equation}{section}
\sloppy \pagestyle{plain} \textwidth=18cm \textheight=22.5cm
\begin{document}
	\title{ \textbf{Controllability and Positivity Constraints in Population Dynamics with age, size Structuring and Diffusion } }
	\date{}
	\author{Yacouba Simpore* and Umberto Biccari}
	
	\thanks{This work has been supported by the European Research Council (ERC) under the European Union’s Horizon 2020 research and innovation programme (grant agreement No 694126-DYCON).}
	
	\address{Yacouba Simpore, Universit\'e de Fada N'Gourma-Burkina Faso\\Laboratoire LAMI, Universit\'e Joseph Ki zerbo\\ Laboratoire LAMIA, Universit\'e Norbert Zongo; simplesaint@gmail.com}
	\address{Umberto Biccari, Chair of Computational Mathematics, Fundación Deusto, Universityof Deusto, 48007 Bilbao, Basque Country, Spain}
	
	%\address{Enrique Zuazua, Department of Data Science, Friedrich-Alexander-Universit\"{a}t Erlangen-N\"{u}rnberg, 91058 Erlangen, Germany; Chair of Computational Mathematics, Fundaci\'on Deusto, Av. de las Universidades, 24, 48007, Bilbao, Basque Country, Spain; Departamento de Matem\'{a}ticas, Universidad Aut\'{o}noma de Madrid, 28049 Madrid, Spain; enrique.zuazua@deusto.es}
	
	\thanks{This project has received funding from the European Research Council (ERC) under the European Union’s Horizon 2020 research and innovation programme (grant agreement NO: 694126-DyCon). The
		work of U.B. is supported by the Grant PID2020-112617GB-C22 KILEARN of MINECO (Spain). }
	%The work of E.Z. is funded by the Alexander von Humboldt-Professorship program, the Transregio 154 Project ‘‘Mathematical Modelling,Simulation and Optimization Using the Example of Gas Networks’’, project C08, of the German DFG, and the COST Action grant CA18232, ‘‘Mathematical models for interacting dynamics on networks’’ (MAT-DYN-NET).}
	
	\maketitle
	\begin{abstract}
		In this article, we consider the infinite dimensional linear control system describing the Population Models Structured by Age, Size, and Spatial Position. The control is localized in the space variable as well as with respect to the age and size. For each control  support, we give an estimate of the time needed to control the system to zero. We prove the null controllability of the model, using a technique avoids the explicit use of parabolic Carleman estimates. Indeed, this method combines final-state observability estimates with the use of characteristics and with $L^{\infty}$ estimates of the associated semigroup.
	\end{abstract}
	Key words. Population dynamics, Null controllability.\\ AMS subject classifications. 93B03, 93B05, 92D25
	\section{Introduction and mains results}
	It has been recognized that age structure alone is not adequate to explain the population dynamics of some species (\cite{b9,o,kk,z}). The size of individuals could also be used to distinguish cohorts. In principle there are many ways to differentiate individuals addition to age, such as body size and dietary requirements or some other physiological 
	variables and behavioral parameters. For the sake of simplicity and the reason of similarity of mathematical treatment we assume here that only one internal variable is involved. Meanwhile, we consider the velocity of internal variable to be constant. Note that this assumption is not restrictive since in  it was pointed out that the problem in which the growth of an internal variable does not increase at the same rate as age can be converted to the constant velocity case.\\
	
	%This article is devoted to proving the null-controllability in Population model structured by age, size, and spatial position.\\
	%Size structured models have also been developed separately or in combination with age structured models. 
	In these models going back to Glenn Webb, the state space of the system is $K=L^2((0,A)\times(0,S);L^2(\Omega)),$ where $A$ and $S$ denote respectively the maximal age and the maximal size an individual can attain; $\Omega\subset \mathbb{R}^n$ (with $n\in \mathbb{N}$ in general but $n=3$ for real life application) is an open bounded which represents the space variable.\\
	Let $y(x,a,s,t)$ be the distribution density of individuals with respect to age, size and position $x\in \Omega$ and some time $t\in(0,T)\text{, }T>0.$\\ 
	According to Webb \cite{b9} the function $y$ satisfies the degenerate parabolic partial differential equation.
	\begin{equation}
		\left\lbrace\begin{array}{ll}
			\dfrac{\partial y}{\partial t}+\dfrac{\partial y}{\partial a}+\dfrac{\partial y }{\partial s}-\Delta y+\mu(a,s)y=mu &\hbox{ in }Q ,\\ 
			\dfrac{\partial y}{\partial\nu}=0&\hbox{ on }\Sigma,\\ 
			y\left( x,0,s,t\right) =\displaystyle\int\limits_{0}^{A}\int\limits_{0}^{S}\beta(a,\hat{s},s)y(x,a,\hat{s},t)da d\hat{s}&\hbox{ in } Q_{S,T} \\
			y\left(x,a,s,0\right)=y_{0}\left(x,a,s\right)&
			\hbox{ in }Q_{A,S};\\
			y(x,a,0,t)=0& \hbox{ in } Q_{A,T}.
		\end{array}\right.
		\label{2}
	\end{equation}
	where,  $y_{0}$ is given in $K$, 
	$Q=\Omega\times (0,A)\times (0,S)\times(0,T)$,
	$\Sigma=\partial\Omega\times(0,A)\times (0,S)\times(0,T)$, $ Q_{S,T}=\Omega\times (0,S)\times (0,T)$, $Q_{A,S}=\Omega\times (0,A)\times (0,S)$ and $Q_{A,T}=\Omega\times (0,A)\times (0,T).$\\
	Moreover, the positive function $\mu$ denotes the natural mortality rate of individuals of age $a$ and size $s,$ supposed to be independent of the spatial position $x$ and of time $t$. The control function is $u$, depending on $x$, $a$, $s$ and $t$, where as $m$ is the characteristic function. \\
	We denote by $\beta$ the positive function describing whereas the fertility rate age-size depending on $(a,\hat{s})$ and also depending on the size $s$ of the newborns. The fertility rate $\beta$ is supposed to be independent of the spatial position $x$ and of time $t,$ so that the density of newly born individuals at the point $x$ at time $t$ of size $s$ is given by
	\[\displaystyle\int\limits_{0}^{A}\int\limits_{0}^{S}\beta(a,\hat{s},s)y(x,a,\hat{s},t)da d\hat{s}.\]
	In these models, size is considered to be an individual-specific variable, and may include size, volume, length, maturity, bacterial or viral load, or other physiological or demographic property. It is assumed that size increases in the same way for everyone in the population and is controlled by the growth function $g(s),$ then $\dfrac{\partial (g(s)y) }{\partial s}$ is the growth. Here $g(s)=1$. For more details about the modeling of such system and the biological significance of the hypotheses, we refer to Glenn Webb \cite{b9}.\\
	
	In the sequel we assume that the fertility rate $\beta$ and the mortality rate $\mu(a,s)=\mu_1(a)+\mu_2(s)$ satisfy the demographic property:
	\begin{align*}
		(H_1)=
		\left\lbrace
		\begin{array}{l}
			\mu_1\left(a\right)\geq 0 \text{ for every } (a)\in (0,A)\\
			\mu_1\in L^{1}\left([0,A^*]\right)\hbox{ for every }\hbox{ } A^*\in [0,A) \\
			\displaystyle\int\limits_{0}^{A}\mu_1(a)da=+\infty
		\end{array} \right..
	\end{align*}
	,
	\begin{align*}
		(h_1)=
		\left\lbrace\begin{array}{l}
			\mu_2\left(s\right)\geq 0 \hbox{ for every } (a)\in (0,A)\\
			\mu_2\in L^{1}\left([0,S^*]\right)\hbox{ for every }\hbox{ } S^*\in [0,A) \\
			\displaystyle\int\limits_{0}^{S}\mu_2(s)ds=+\infty
		\end{array} \right..
	\end{align*}
	,
	\begin{align*}
		(H_{2})=
		\left\lbrace\begin{array}{l}
			\beta(a,\hat{s},s)\in C\left([0,A]\times [0,S]^2\right)\\
			\beta\left(a,\hat{s},s\right)\geq0 \hbox{ in } \left([0,A]\times [0,S]^2\right)
		\end{array}
		\right..
	\end{align*}
	In probabilistic terms, fertility $\beta$ can mean the probability of an individual of age $a$ and of size $\hat{s}$ giving birth to an individual of size $s.$\\
	We consider the following hypotheses:
	\[
	(H_3):
	\begin{array}{c}
		\beta(a,\hat{s},s)=0 \text{ } \forall a\in (0,\hat{a}) \text{ for some } \hat{a}\in (0,A). 
	\end{array}
	\]
	\[
	(H_4):
	\begin{array}{c}
		\beta(a,\hat{s},s)=0 \text{ } \forall s\in (s_e,S) \text{ for some } s_e\in (0,S). 
	\end{array}
	\]
	Also we denote by \[Q_{1}=\omega\times (a_1,a_2)\times (s_1,s_2),\]with $0\leq a_1<a_2\leq A\text{ and }0\leq s_1<s_2\leq S.$ and that $\omega\subset \Omega$ is an open set; \[Q_{2}=\{(x,a,s)\hbox{ such that } (x,a,s)\in \omega\times(a_1,a_2)\times(0,S)\hbox{ and } a-a_0<s<a+s_e\}.\] where $s_e\in (0,S],$ and $a_0\in [a_1,\hat{a}].$\\
	We denote by $m_i$ the characteristic function of $Q_{i}$ where $i\in \{1,2\}$; and $m_i$ the support of the control $u_i$.\\
	\textbf{NB :} All the figures on this paper represented in dimension two are in reality in dimension three; They are cubic in shape seen from above. The height represents the time $t.$ The characteristics evolve from top to bottom but in the positive direction with respect to the age variable $a$ and the size variable $s.$
	\begin{figure}[H]
		\begin{subfigure}{0.55\textwidth}
			\begin{overpic}[scale=0.25]{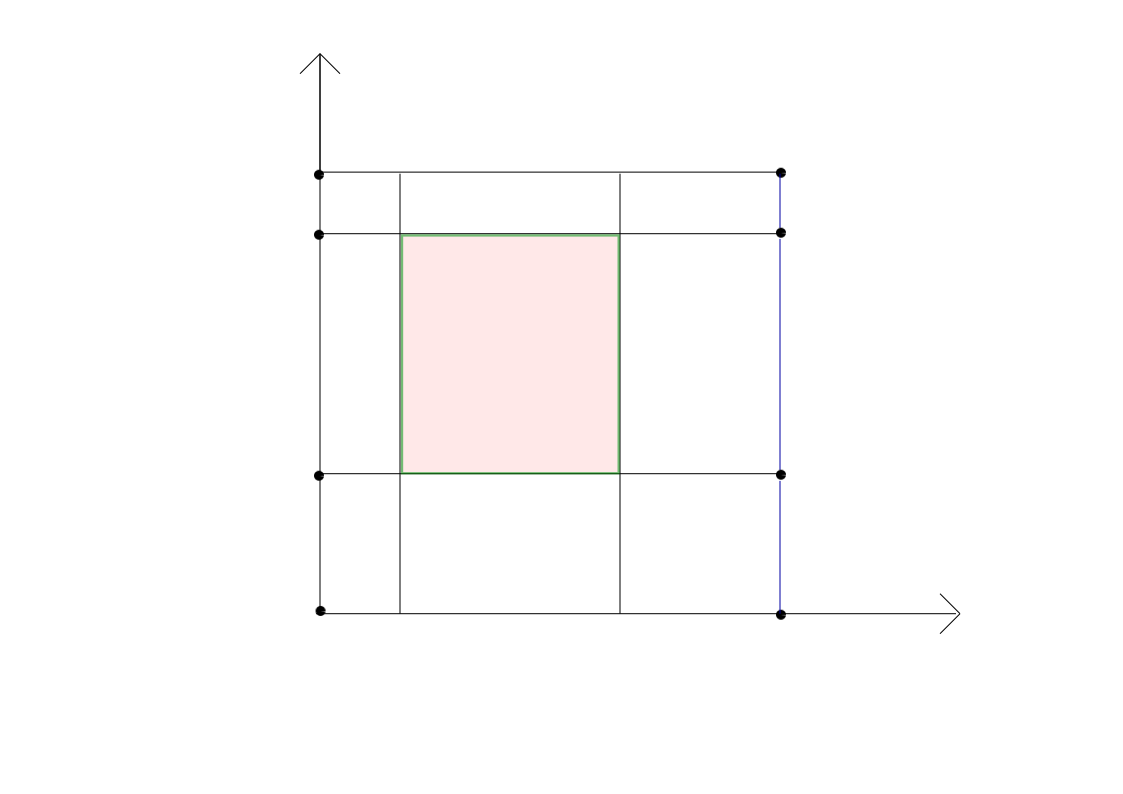}
				\put (21,53) {$S$}
				\put (22,49.5) {$s_2$}
				\put (40,34.5) {$Q_1$}	
				\put (22,27.5) {$s_1$}
				\put (35,12) {$a_1$}
				\put (52,12) {$a_2$}
				\put (67,10.5) {$A$}
			\end{overpic}
			\caption{Here is the support $m_1$ without the space variable see from above}
		\end{subfigure}\quad
		\begin{subfigure}{0.5\textwidth}
			\begin{overpic}[scale=0.25]{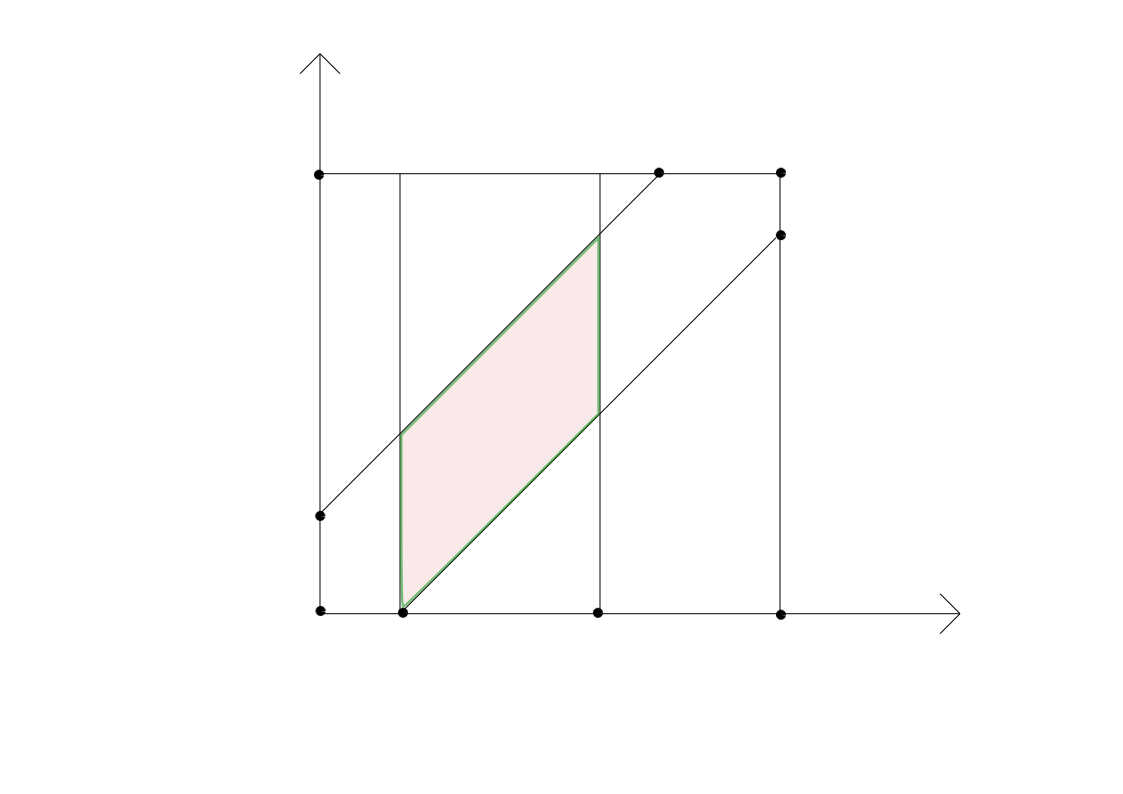}
				\put (20,53) {$S$}	
				\put (22,23) {$s_e$}
				\put (42,32.5) {$Q_2$}
				\put (35,12) {$a_1$}
				\put (50,12) {$a_2$}
				\put (67,10.5) {$A$}	
			\end{overpic}
			\caption{Here is the support $m_2$ without the space variable see from above}
		\end{subfigure}
		\caption{The Figures above illustrate  the different control supports}
	\end{figure}
	\begin{definition}
		$\bullet$ The real $s_e$ is the maximal size expectancy of the newborn.\\
		$\bullet$ The real $\hat{a}$ is the minimal age from which individuals become fertile. We will call it "the minimal age of fertility".\\
	\end{definition}
	From the previous data, we denote by $T_0=\max\{s_1,S-s_2\}\hbox{ and } T_1=\max\{a_1+S-s_2,s_1\}.$ \\
	Our main results are the following:
	\begin{theorem}
		Assume that $\beta$ and $\mu$ satisfy the conditions ($H1)-(H2)-(h_1)$ above.\\  Assume that $a_1< \hat{a}$, $T_0<\min\{a_2-a_1,\hat{a}-a_1\},$ and the assumption $(H3)$ hold.\\
		Then for every $T>A-a_2+T_1+T_0$ and for every $y_0\in L^2(\Omega\times(0,A)\times (0,S)) $
		there exists a control $u_1\in L^2(Q_1)$ such that the solution $y$ of $(\ref{2})$ satisfies 
		\[ y(x,a,s,T)=0\hbox{ a.e }x \in \Omega\hbox{ } a \in (0,A)\hbox{ and } s \in (0,S).\]	
	\end{theorem}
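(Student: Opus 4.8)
The plan is to avoid a global Carleman estimate for the degenerate operator and instead exploit the transport structure in the $(a,s)$ variables together with the internal null controllability of the heat equation in the space variable. The starting point is that the three first-order terms $\partial_t+\partial_a+\partial_s$ form a single material derivative along the characteristic lines $\{a-s=\mathrm{const}\}$, both $a$ and $s$ increasing at unit speed. Setting $Y_c(x,t)=y(x,a_0+t,a_0-c+t,t)$ along the characteristic $a-s=c$, equation $(\ref{2})$ becomes, for each fixed $c$, a controlled heat equation $\partial_t Y_c-\Delta Y_c+\mu(\cdot)Y_c=m_1u_1$ in $\Omega$ with homogeneous Neumann data, the control being active precisely on the time window during which the characteristic lies inside the age-size box $(a_1,a_2)\times(s_1,s_2)$ and $x\in\omega$. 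Thus the problem decouples into a one-parameter family, indexed by $c$, of internal null-controllability problems for the heat equation, for which I would invoke the classical final-state observability estimate, obtained spectrally in the spirit of Lebeau-Robbiano rather than through a parabolic Carleman inequality, with a control cost that is uniform in $c$.

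Next I would classify the characteristics by their entry data. Those that, traced backward in time, meet the face $s=0$ before reaching $a=0$ or $t=0$ carry zero data, since $y(x,a,0,t)=0$; hence they contribute nothing and need no control. Those issued from the initial datum whose slope $a-s$ lies in the admissible range $(a_1-s_2,\,a_2-s_1)$ cross the control box and can be steered to zero by the heat-equation control; the conditions $T_0<\min\{a_2-a_1,\hat a-a_1\}$ guarantee that the relevant time windows are long enough for the observability estimate to apply. The remaining characteristics either leave the cylinder through $a=A$ or $s=S$, where individuals are lost, or enter through the birth face $a=0$, where the value is the nonlocal term $\int_0^A\!\int_0^S\beta\,y\,da\,d\hat s$.

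The main obstacle is precisely this birth coupling, which turns the datum at $a=0$ into a feedback on the interior state and forbids a one-shot construction. Here I would use $(H_3)$: since $\beta$ vanishes for $a<\hat a$, the birth datum at time $t$ depends only on the population of age $\geq\hat a$, that is, on newborns created before time $t-\hat a$ together with aged initial data. This yields a Volterra equation in time with delay $\hat a$, solvable stepwise on intervals of length $\hat a$. Because $a_1<\hat a$, the control box reaches below the fertile age, so the control can annihilate each cohort before it becomes reproductive; once no fertile individual survives, the birth datum vanishes identically and the system merely ages and grows out of the cylinder. Accounting for the worst-case characteristic, namely the time $A-a_2$ needed for the oldest controlled cohort to reach the maximal age, together with the size entry and exit times encoded in $T_0=\max\{s_1,S-s_2\}$ and $T_1=\max\{a_1+S-s_2,s_1\}$, produces exactly the threshold $T>A-a_2+T_1+T_0$.

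Finally I would assemble the pieces: the per-characteristic controls $v_c$ are glued into a single function $u_1(x,a,s,t)=v_{a-s}(x,t)$ supported in $Q_1$, and I would verify $u_1\in L^2(Q_1)$ by integrating the uniform control-cost bound over $c$, using the $L^\infty$ estimates of the underlying semigroup to dominate the data (initial datum and birth term) feeding each characteristic. Propagating this control forward and checking that every characteristic reaching time $T$ carries zero data then yields $y(\cdot,\cdot,\cdot,T)=0$, which is the claim. I expect the delicate points to be the uniform-in-$c$ control cost near the degenerate slopes $c\to a_1-s_2,\ a_2-s_1$, where the time window shrinks, and the measurable selection needed to keep the glued control in $L^2$.
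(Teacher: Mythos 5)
Your overall strategy (a direct, primal construction: transport along the characteristics $a-s=\mathrm{const}$, per-characteristic heat controls on $\omega$, a stepwise treatment of the renewal term using $(H_3)$) is a legitimate alternative to what the paper does --- the paper never touches the primal system and instead proves the final-state observability inequality for the adjoint pair $(\mathcal{A}^*,\mathcal{B}^*)$, estimating the nonlocal term $\int_0^S\beta(a,s,\hat s)\,q(x,0,\hat s,t)\,d\hat s$ along backward characteristics via the Fursikov--Imanuvilov heat observability. But as written your plan has gaps at exactly the points where the theorem's hypotheses do their work. First, the assertion that ``the control can annihilate each cohort before it becomes reproductive'' is false for the cohorts that never meet $Q_1$: any initial mass with $a-s\ge a_2-s_1$ or $a-s\le a_1-s_2$, and every newborn of size $s>s_2-a_1$, has a characteristic disjoint from $(a_1,a_2)\times(s_1,s_2)$, and such individuals do reach the fertile age $\hat a$ and reproduce unless they exit first. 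The hypothesis $T_0<\hat a-a_1$, i.e.\ $S-s_2<\hat a-a_1$ and $s_1<\hat a-a_1$, is precisely what guarantees that an uncontrollable newborn of size $s>s_2-a_1$ leaves through $s=S$ at age $S-s<S-s_2+a_1<\hat a$, hence before becoming fertile; you instead attribute this hypothesis to ``time windows long enough for the observability estimate,'' which misses its actual geometric role, and without it your Volterra/stepwise argument does not close: the birth term never provably switches off.

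Second, the claimed uniform-in-$c$ control cost is not available: the null-control cost of the heat equation over a window of length $\tau$ grows like $e^{C/\tau}$, and the crossing time of a characteristic with the control box tends to $0$ as $c\to a_1-s_2$ or $c\to a_2-s_1$, so integrating your per-characteristic bounds over $c$ does not give $u_1\in L^2(Q_1)$. The cure is to show that those near-degenerate cohorts need no control at all because they exit through $a=A$ or $s=S$ before time $T$ (this is where $T>A-a_2+T_1+T_0$ with $T_0=\max\{s_1,S-s_2\}$ enters), keeping a fixed margin $\delta>0$ away from the degenerate slopes exactly as the paper does in its Proposition 3.1; you flag this as ``delicate'' but do not supply it, and it is not a routine verification. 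Finally, a smaller but real point: the characteristics form a two-parameter family (indexed by $a-s$ and, say, $a-t$), so the gluing formula $u_1(x,a,s,t)=v_{a-s}(x,t)$ assigns one and the same control to infinitely many distinct cohorts carrying different data; the control must be indexed by the full characteristic, e.g.\ $u_1(x,a,s,t)=v_{(a-s,\,a-t)}(x,t)$. Until these three points are filled in, the proposal is an outline of a plausible alternative proof rather than a proof.
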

	One can also have a result of null controllability by acting on the same individuals of age $a_1$ until the age $a_2$, with additional constraints on the kernel $\beta$ but without the condition $T_0<\min\{a_2,\hat{a}\}-a_1.$
	\begin{theorem}
		Assume that $\beta$ and $\mu$ satisfy the conditions ($H1)-(H2)$ above.\\ Assume that the hypotheses $(H3)-(h1)$ and $(H4)$ above.
		Then for every $T>\sup\{S-s_e,A-a_0+a_1\}$ and for every $y_0\in L^2(\Omega\times(0,A)\times (0,S)),$ there exists a control $u_2\in L^2(Q_{2})$ such that the solution $y$ of $(\ref{2})$ satisfies 
		\[ y(x,a,s,T)=0\hbox{ a.e. }  x \in \Omega\hbox{ } a \in (0,A)\hbox{ and } s \in (0,S).\]
	\end{theorem}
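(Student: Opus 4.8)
The strategy is to build the control $u_2$ by slicing the population along the characteristics of the transport part $\partial_t+\partial_a+\partial_s$ and, on each slice, to invoke the final-state (null) controllability of the Neumann heat equation with an interior control in $\omega$ as a black box; this is precisely what lets us dispense with a global Carleman estimate for the full degenerate operator. Since $a$ and $s$ travel at unit speed, the quantity $c:=s-a$ is conserved along characteristics, and setting $Y(x,t):=y(x,a_*+t,s_*+t,t)$ along the line issued from $(a_*,s_*)$ turns $(\ref{2})$ into the controlled heat equation $\partial_tY-\Delta Y+\mu(a_*+t,s_*+t)Y=m_2u_2$ with $\partial_\nu Y=0$. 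In these coordinates the support $Q_2$ is exactly the band of characteristics $c\in(-a_0,s_e)$ observed while the age lies in $(a_1,a_2)$.

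Next I would classify the characteristics arriving in the domain at the final time $T$. If $c>s_e$, then by $(H_4)$ the renewal integral vanishes for newborns of size above $s_e$, so a characteristic issued from the boundary $a=0$ carries a zero datum; the only other contribution is the initial datum, which is transported toward $s=S$ and, because $T>S-s_e$, leaves the size interval before time $T$, where $(h_1)$, i.e. $\int_0^S\mu_2\,ds=+\infty$, forces it to decay to zero. If $c<-a_0$, the characteristic is either issued from $s=0$, where $y=0$, or from the initial datum, which is transported toward $a=A$ and, because $T>A-a_0+a_1$, leaves the age interval before $T$, where $(H_1)$ forces decay. The remaining characteristics, $c\in(-a_0,s_e)$, are precisely those meeting $Q_2$ and will be driven to zero directly.

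On the controllable band, for each fixed characteristic I would apply the heat null-controllability result on the time sub-interval where the age $a(t)$ ranges over $(a_1,a_2)$, producing a control that steers the corresponding $x$-slice to zero as the characteristic leaves $Q_2$; the slice then remains zero and is in particular zero at $t=T$. Uniform $L^\infty$ estimates for the semigroup generated by $\Delta-\mu$ are used to absorb the zeroth-order mortality term independently of $c$ and to guarantee that this one-parameter family of controls depends measurably on $c$ and assembles into a single $u_2\in L^2(Q_2)$ of finite norm.

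The main difficulty is the nonlocal renewal condition at $a=0$, which couples every characteristic and keeps feeding mass into the band $c\in(0,s_e)$; it prevents treating the slices independently. I would handle it by a fixed-point argument: regard the birth profile as a source, solve the controlled, decoupled problem, and show that the resulting map on the birth profile is contractive (or compact with an a priori bound), using $(H_3)$---fertility switches on only at age $\hat a$, which introduces a time delay---and the margin built into $T$. Concretely, the control is arranged to annihilate the fertile ages early enough that the renewal term vanishes for $t>T-a_1$, so that individuals younger than $a_1$ at time $T$ inherit a zero birth datum. The delicate point, and the heart of the bookkeeping, is to check that the threshold $T>\sup\{S-s_e,\,A-a_0+a_1\}$ is exactly what forces every characteristic either to reach $Q_2$ or to exit through $a=A$ or $s=S$ within $[0,T]$, the term $S-s_e$ accounting for the size exit and $A-a_0+a_1$ for the age exit together with the delay before the control ages are reached.
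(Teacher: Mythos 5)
Your proposal attacks the primal system directly, whereas the paper never constructs the control at all: it reduces Theorem 1.2 to the final-state observability of the adjoint pair $(\mathcal{A}^*,\mathcal{B}^*)$ (Theorem 3.2) and proves that inequality by splitting the final-time section into $[0,a_0)\times[0,s_e]$ and its complement $V_2(T)$, showing via the representation formula that on $V_2(T)$ the adjoint solution is produced entirely by the nonlocal term $q(x,0,\hat s,l)$ with $\hat s\in(0,s_e)$ and $l$ bounded below by $\max\{T-A+a,T-S+s\}>a_1$, and then estimating that nonlocal term along characteristics using the Fursikov--Imanuvilov observability inequality for the Neumann heat equation. Your characteristic bookkeeping for the two uncontrolled bands $c=s-a>s_e$ and $c<-a_0$ is sound and mirrors the paper's Lemma on $V_2(T)\cap A_1=\emptyset$, and the exit-time computations do recover the threshold $\sup\{S-s_e,\,A-a_0+a_1\}$.

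The gap is in the treatment of the renewal coupling on the middle band, which is exactly the part you defer to an unproved fixed point. First, the map you propose (birth profile $\mapsto$ controlled solution $\mapsto$ new birth profile) has no reason to be contractive: its natural operator norm is governed by the reproduction number $\mathcal{R}$, which the hypotheses do not bound by $1$. What actually tames the coupling is the finite-generation structure coming from $(H_3)$ (fertility only above age $\hat a\ge a_2$, so each generation can be extinguished in $Q_2$ before it reproduces, and only finitely many generations fit in $[0,T]$); you gesture at the delay but do not exploit it, and without it the iteration need not converge. Second, assembling the slice-wise heat controls into a single $u_2\in L^2(Q_2)$ fails as stated: a characteristic born at time $\tau$ enters the control ages at $\tau+a_1$, and for $\tau\uparrow T-a_1$ the available control window $(\tau+a_1,T)$ shrinks to zero, so the null-controllability cost blows up like $e^{C/(T-\tau-a_1)}$ and the family of controls is not square-summable. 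Arranging the birth term to vanish only for $t>T-a_1$ does not remove these slices; you would need it to vanish for $t>T-a_1-\epsilon_0$ with a fixed margin $\epsilon_0>0$ extracted from the strict inequality on $T$. The paper's dual formulation sidesteps both issues at once, since the nonlocal term only ever needs to be estimated for times $t>\eta$ with $\eta>a_1$ fixed, so a single uniform observability constant suffices.
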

	\begin{remark}
		The proof of the theorem comes down to the observability of the adjoint system of $(\ref{2}).$
	\end{remark}
	\begin{remark}
		\begin{itemize}
			\item [1-] The assumptions $(H3)$ and $a_1<\hat{a}$ in the case where the control support is $Q_1$ are fundamental for the proof of the theorem 1.1. For the moment, we only manage to prove this result when these conditions are verified.
			\item [3-] In the case where the control support is $Q_2,$ the condition $a_0\in [a_1,\hat{a}]$ associated with the hypotheses $(H4)$  allows us to establish the observability inequality of the adjoint system.		
			\item [4-] In all cases the assumptions $ (H1) $ and $ (H2) $ are necessary for the problem to be well posed.	
		\end{itemize}
	\end{remark}
	Many versions of the age-size structured model, both linear and nonlinear, have been investigated, and seminal treatments of such models are given by Metz and Diekmann \cite{j} and Tucker and Zimmerman \cite{z}.\\
	Spatial structure in linear and nonlinear age or size structured models has also been investigated by many researchers, including \cite{j,b9,z} in the context of the existence of solutions by the semi-group method.\\
	Also, many results of null controllability of the Lotka-McKendrick system without or with spatial diffusion have been obtained by several authors \cite{yac3,dy,b1,B1,abst,b6,B}. The first result was obtained by Ainseba and Anita \cite{b1}. They proved that the Lotka-Mckendrick system with spatial diffusion can be driven to a steady state in any arbitrary time  $T>0$ keeping the positivity of the trajectory, provided the initial data is close to the steady state and the control acts in a spatial subdomain $\omega\subset\Omega$ but for all ages.\\
	Recently, Maity \cite{b6} proved that null controllability can be achieved by controls supported in any subinterval of $(0,A)$, provided we control before the individuals start to reproduce.\\
	In \cite{b2}, Hegoburu and Tucsnak proved that the same system is null controllable for all ages and in any time by controls localized with respect to the spatial variable but active for all ages.\\
	In \cite{Genni} the authors study the null controllability property for a single population model in which the population $y$ depends on time $t$ space $x,$ age $a$ and size $\tau$ using essentially the Carleman estimates. It should be noted that in their work the growth function $g$ is variable and depends only on the size $s.$\\
	The authors D. Maity, M. Tucsnak and E. Zuazua in \cite{dy} solved a problem of null controllability of the system Lotka-Mckendrick with spatial diffusion where the control is localized in the space variable as well as with respect to the age. The method combines final-state observability estimates with the use of characteristics and the associated semigroup. The same method is used in \cite{yac2}, to prove the null controllability of the nonlinear Lotka-mckendrick system where the control is also localized in the space variable as well as with respect to the age.\\
	The author Y.Simpor\'e and O.Traor\'e have recently in \cite{yac1} solved a problem of null controllability of a nonlinear age, space and two-sex structured population dynamics model; the system being a cascade system and the coupling being at the level of renewal terms, the technique introduced in \cite{dy} was the most suitable for resolution. \\
	This paper is devoted to introduce a null controllability and null controllability with positivity constraints of population dynamics structuring with age size and spatial diffusion.\\
	Indeed, it has been recognized that age structure alone is not adequate to explain the population dynamics of some species. The size of individuals could also be used to distinguish cohorts. In principle there are many ways to differentiate individuals in addition to age, such as body size and dietary requirements or some other physiological variables and behavioural parameters. Hence the interest of this work.\\
	The main novelties brought in by our paper are:\\
	$\bullet$ The first novelty in this work is the introduction of the null controllability for this model.
	For each control support $m_i,$ we establish an estimate of the time $T$ depending on $m_i$, necessary to control the system at zero using some assumptions on the kernel $\beta.$\\
	$\bullet$ Here, we use  the technique introduce by D. Maity, M. Tucsnak and E. Zuazua (\cite{dy}) combining final-state observability estimates with the use of characteristics and with $L^\infty$ estimates of the associated semigroup. This technique is interesting because it avoids the establishment of Carleman's inequality for the model that seems to be very expensive.\\
	With this technique, we show that our global controllability result applies to individuals of all ages and sizes, without needing to exclude ages and sizes in a neighborhood of zero; which will probably not be the case with Carleman's estimates.\\
	$\bullet$ Controllability with positivity constraints is proved, as far as we know for the first time, with a control which is localised both in age and with respect to the space variable. The methodology employed to obtain this result is based on duality and $L^\infty$ estimates for parabolic PDEs.\\
	The remaining part of this work is organized as follows:\\
	In Section 2, we first recall some basic facts about the semigroup of the population dynamics structured by age, size and spatial position. We next formulate our control problem in a semigroup setting and we define the associated adjoint semigroup.\\ Section 3 is divided in two parts. In first part, we prove the final state observability on space-age for the adjoint system and, as a consequence, we obtain the proof of the result of the Theorem 1.1 and, in the second part, we  prove the final state observability space-size for the adjoint system and, as a consequence, we obtain the proof of the result of the Theorem 1.2.\\
	Section 4 is first devoted to the preliminary study of the existence of a stationary solution for the model, then to the proof that the controllability between positive stationary states can be reached in a sufficiently large time.\\ We give the description of possible extensions and open questions in the section 5.\\
	\section{Population Models Structured by Age, Size, and Spatial Position Semigroup}
	In this section, we provide some basic results on the population semigroup for the linear age, size structured model with diffusion and its adjoint operator. We give the existence of the semigroup in  the Hilbert space $K=L^2((0,A)\times(0,S);L^2(\Omega)).$\\		
	For it, we define the operator: $\mathcal{A}:K\longrightarrow K$ as follows:
	\[\mathcal{A}\varphi=\Delta \varphi-\dfrac{\partial \varphi}{\partial a}-\dfrac{\partial \varphi}{\partial s}-\left(\mu_1(a)+\mu_2(s)\right)\varphi\]
	where
	\begin{equation*}
		D(\mathcal{A})=	\left\lbrace\begin{array}{l}
			\varphi\in L^2\left((0,A)\times(0,S);H^2(\Omega)\right)/ \mathcal{A}\varphi\in K\hbox{, }\\
			\dfrac{\partial \varphi}{\partial a}\hbox{, }\dfrac{\partial \varphi}{\partial s}\in L^2\left((0,A)\times(0,S);H^1(\Omega)\right)\\
			\dfrac{\partial\varphi}{\partial\nu}|_{\partial\Omega}=0\hbox{, }\\ \varphi(a,0,s)=0\hbox{, }\varphi(x,0,s)=\int\limits_{0}^{A}\int\limits_{0}^{S}\beta(a,\hat{s},s)\varphi(x,a,\hat{s})da d\hat{s}.
		\end{array}\right.\label{ywe}
	\end{equation*}
	\begin{theorem}
		Under the assumption $(H1)-(H2)-(h1),$ the population (age-size and diffusion) operator with diffusion $(\mathcal{A},D(\mathcal{A}))$ is the infinitesimal generator of a strongly continuous semigroup $\textbf{U}$ on $K.$	
	\end{theorem}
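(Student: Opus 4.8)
The plan is to invoke the Lumer--Phillips theorem: after subtracting a suitable constant $\omega>0$, I will show that $(\mathcal{A}-\omega I,D(\mathcal{A}))$ is $m$-dissipative on $K$, which produces a strongly continuous contraction semigroup for $\mathcal{A}-\omega I$ and hence a $C_0$-semigroup $\mathbf{U}$ for $\mathcal{A}$. This requires two ingredients: a dissipativity estimate $\mathrm{Re}\,\langle\mathcal{A}\varphi,\varphi\rangle_K\le \omega\|\varphi\|_K^2$ for all $\varphi\in D(\mathcal{A})$, and the range condition $R(\lambda I-\mathcal{A})=K$ for some $\lambda>\omega$.

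For the dissipativity estimate I would compute $\langle\mathcal{A}\varphi,\varphi\rangle_K$ by integrating by parts in each variable. The diffusion term gives $\int_\Omega(\Delta\varphi)\varphi\,dx=-\int_\Omega|\nabla\varphi|^2\,dx\le 0$, the Neumann condition $\partial_\nu\varphi=0$ ensuring that no boundary term survives. The transport terms produce, after integration in $a$ and $s$,
\[
-\tfrac12\!\int_{\Omega}\!\int_0^S\!\varphi^2(x,A,s)\,ds\,dx
+\tfrac12\!\int_{\Omega}\!\int_0^S\!\varphi^2(x,0,s)\,ds\,dx
-\tfrac12\!\int_{\Omega}\!\int_0^A\!\varphi^2(x,a,S)\,da\,dx,
\]
where the $s=0$ boundary term vanishes because $\varphi(x,a,0)=0$ on $D(\mathcal{A})$, and the two negative contributions at $a=A$ and $s=S$ only help. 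The mortality term contributes $-\int(\mu_1+\mu_2)\varphi^2\le 0$ since $\mu_1,\mu_2\ge 0$ by $(H1)$--$(h1)$. The only unfavourable term is the renewal trace at $a=0$; using the boundary condition together with $(H2)$, so that $\beta$ is continuous, hence bounded, on the compact $[0,A]\times[0,S]^2$, Cauchy--Schwarz gives
\[
\int_\Omega\!\int_0^S\varphi^2(x,0,s)\,ds\,dx
\le \|\beta\|_\infty^2\,A\,S^2\,\|\varphi\|_K^2 .
\]
Setting $\omega=\tfrac12\|\beta\|_\infty^2 A S^2$ then yields $\mathrm{Re}\,\langle\mathcal{A}\varphi,\varphi\rangle_K\le\omega\|\varphi\|_K^2$, i.e. dissipativity of $\mathcal{A}-\omega I$.

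For the range condition I would solve $(\lambda I-\mathcal{A})\varphi=f$ for $\lambda$ large and arbitrary $f\in K$ by the method of characteristics. Since the transport speeds in $a$ and $s$ are both one, the characteristics of $\partial_a+\partial_s$ are the lines $a-s=\text{const}$; parametrising along them, and using that the Neumann Laplacian commutes with the $(a,s)$-transport, the resolvent equation reduces on each characteristic to a first-order ODE in the running variable $\sigma$ with values in $L^2(\Omega)$, driven by the analytic Neumann-heat semigroup $e^{\sigma\Delta}$ and by the survival factor $\exp\big(-\!\int(\mu_1+\mu_2)\big)$; the local integrability of $\mu_1,\mu_2$ on compact subintervals in $(H1)$--$(h1)$ makes this factor well defined. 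Integrating from the incoming boundary yields an explicit variation-of-constants representation of $\varphi$ in terms of $f$ and of the still-unknown trace $\varphi(\cdot,0,\cdot)$.

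The main obstacle is precisely this nonlocal renewal boundary condition at $a=0$: it enters through the domain $D(\mathcal{A})$ rather than as a bounded additive perturbation on $K$, so it cannot be absorbed by a routine perturbation theorem. To close the argument I would substitute the characteristic representation into the renewal identity $\varphi(x,0,s)=\int_0^A\int_0^S\beta(a,\hat s,s)\varphi(x,a,\hat s)\,da\,d\hat s$, obtaining a linear fixed-point equation for the trace $\varphi(\cdot,0,\cdot)$. Because each pass through the $a$-integration carries a factor decaying like $1/\lambda$, together with the contraction induced by the survival factor and by $\|\beta\|_\infty$, this map is a strict contraction on $L^2(\Omega\times(0,S))$ for $\lambda$ sufficiently large; the Banach fixed-point theorem then produces a unique trace, hence a unique $\varphi\in D(\mathcal{A})$ solving $(\lambda I-\mathcal{A})\varphi=f$. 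This establishes the range condition, and Lumer--Phillips concludes that $(\mathcal{A},D(\mathcal{A}))$ generates the $C_0$-semigroup $\mathbf{U}$ on $K$.
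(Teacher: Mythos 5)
Your proposal is essentially correct, but it proves generation by a genuinely different route from the paper. You verify the two hypotheses of Lumer--Phillips: quasi-dissipativity of $\mathcal{A}$ (integration by parts, with the sole unfavourable contribution being the age-zero trace, absorbed via Cauchy--Schwarz and the boundedness of $\beta$ from $(H_2)$ into the shift $\omega=\tfrac12\|\beta\|_\infty^2AS^2$), and the range condition, obtained by solving the resolvent equation along the characteristics $a-s=\mathrm{const}$ and closing the nonlocal renewal condition by a Banach fixed point for the trace $\varphi(\cdot,0,\cdot)$, the contraction constant coming from the $e^{-\lambda a}$ decay for $\lambda$ large. The paper instead constructs the semigroup explicitly in the time domain, following Webb: it writes the solution by characteristics, derives an abstract linear Volterra integral equation for the birth function $b_\phi$ in the space $V$, invokes the classical theory of Volterra equations for its unique solvability, and then defines $\mathbf{U}(t)$ by the resulting representation formula, citing Webb and Chen--Guo for the semigroup property. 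The two fixed-point arguments are close cousins --- your resolvent equation for the trace is essentially the Laplace transform of the paper's Volterra equation for $b_\phi$ --- but the payoffs differ: the constructive approach yields the explicit formula for $\mathbf{U}(t)$ that the paper reuses later (for the adjoint representation and the observability estimates along characteristics), whereas your argument is shorter and more self-contained for generation alone. The only detail you should still supply is that the function produced by your variation-of-constants formula indeed has the regularity required to lie in $D(\mathcal{A})$ (membership in $L^2((0,A)\times(0,S);H^2(\Omega))$ and the stated regularity of $\partial_a\varphi,\partial_s\varphi$); this is standard for the Neumann heat semigroup but is part of the range condition and deserves a sentence.
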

	To give a clear idea on the semi-group, let $\phi\in K,$ we consider the following system:
	\begin{equation}
		\left\lbrace\begin{array}{ll}
			\dfrac{\partial y}{\partial t}+\dfrac{\partial y}{\partial a}+\dfrac{\partial y }{\partial s}-\Delta y+\mu_1(a)y=u &\hbox{ in }\Omega\times(0,A)\times(0,S)\times(0,+\infty) ,\\ 
			\dfrac{\partial y}{\partial\nu}=0&\hbox{ on }\partial\Omega\times(0,A)\times(0,S)\times(0,+\infty),\\ 
			y\left( x,0,s,t\right) =\displaystyle\int\limits_{0}^{A}\int\limits_{0}^{S}\beta(a,\hat{s},s)y(x,a,\hat{s},t)da d\hat{s}&\hbox{ in }\Omega\times(0,S)\times(0,+\infty) \\
			y\left(x,a,s,0\right)=y_{0}\left(x,a,s\right)&
			\hbox{ in }\Omega\times(0,A)\times(0,S);\\
			y(x,a,0,t)=0& \hbox{ in }\Omega\times(0,A)\times(0,+\infty).
		\end{array}\right..
		\label{xdc}
	\end{equation}
	We denote by $w(x,\lambda)=y(x,a+\lambda,s+\lambda,\lambda).$ The function $w$ verifies the following system:
	\begin{equation}
		\left\lbrace
		\begin{array}{l}
			w'(x,\lambda)=(\Delta -\mu(a-\lambda)-\mu_2(s-\lambda))w\\
			w(x,0)=\phi(x,a,s).
		\end{array}
		\right.
	\end{equation}
	The method of characteristics yields the following formula for the density $y$ :
	\begin{equation}
		y(x,a,s,t)=\left\lbrace
		\begin{array}{l}
			X(t)\phi(x,a-t, s-t)\text{ if } 0<t<a\text{ and }  0<t<s,\\
			X(a)y(x,0,s-a,t-a) \text{ if }0<a<t \text{ and } 0<a<s,\\
			0 \text{ otherwise }.
		\end{array}
		\right.\label{mop}
	\end{equation}
	where $X$ is a  semigroup (see \cite[\hbox{ pp 33-35 }]{b9}) and \cite{kk}.\\
	To obtain $y(x,a,s,t)$ for $0<a<t \text{ and } 0<a<s$  we must solve the boundary condition using \[y(x,0,s,t)=\int\limits_{0}^{A}\int\limits_{0}^{S}\beta(a,\hat{s},s)y(x,a,\hat{s},t)da d\hat{s}\] and the representation $(\ref{mop}).$ \\
	We denote by: \[b_{\phi}(s,t)=y(x,0,s,t)=\int\limits_{0}^{A}\int\limits_{0}^{S}\beta(a,\hat{s},s)y(x,a,\hat{s},t)da d\hat{s},\] where 
	\[b_{\phi}:[0,S)\times[0,\infty)\longrightarrow L^{2}(\Omega)\]
	satisfies the following integral equation :
	\[b_{\phi}(s,t)=\int\limits_{0}^{t}\int\limits_{a}^{S}\beta(a,\hat{s},s)X(a)y(x,0,\hat{s}-a,t-a)da d\hat{s}+X(t)\int\limits_{t}^{A}\int\limits_{t}^{S}\beta(a,\hat{s},s)\phi(x,a-t,\hat{s}-t)da d\hat{s}\]
	\begin{equation}
		=\int\limits_{0}^{t}\int\limits_{a}^{S}\beta(a,\hat{s},s)X(a)b_{\phi}(\hat{s}-a,t-a)da d\hat{s}+X(t)\int\limits_{0}^{A-t}\int\limits_{0}^{S-t}\beta(a+t,\hat{s}+t,s)\phi(x,a,\hat{s})da d\hat{s}.\label{semi1}
	\end{equation}
	Let \[V=\{f\in C([0,S), L^2(\Omega)):  \lim_{s\longrightarrow S}f(s)=0\},\] let $W=C([0,T_1],V)$ where $T_1>0.$\\ Let
	\begin{align*}
		c_{\phi}(s,t)=\left\lbrace\begin{array}{l}
			X(t)\int\limits_{0}^{A-t}\int\limits_{0}^{S-t}\beta(a+t,\hat{s}+t,s)\phi(x,a,\hat{s})da d\hat{s}	\hbox{ if } A>t \hbox{ and } S>t	\\
			0 \hbox{ otherwise }
		\end{array}\right.
	\end{align*}
	Let $C_{\phi}(t)(s)=c_{\phi}(s,t) \text { if } 0\leq t\leq T_1\text{, }s\in(0,S),$ and then $C_{\phi}\in W.$\\
	Define $P:[0,+\infty )\longrightarrow B(V)$ (the space of bounded linear operators in V) by \[(P(a)f)(s)=X(a)\int_{a}^{S}\beta(a,\hat{s},s)f(\hat{s}-a)d\hat{s} \text{,  }f\in V \text{ }a\in (0,A)\]
	Then, $P(a)$ is well-defined, since $X(t), t>0$ is uniformly strongly continuous on  $L^2(\Omega)$, and $\beta$ and $f$ are continuous. Equation $(\ref{semi1})$ may now be written as an abstract linear Volterra integral equation in $V$ :
	\begin{equation}
		B_{\phi}(t)=\int\limits_{0}^{t}P(a)B_{\phi}(t-a)da+C_{\phi}(t)\label{vi}
	\end{equation}
	where $B_{\phi}\in W$ and $B_{\phi}(t)(s)=b_{\phi}(t,s).$\\
	The Volterra equation (\ref{vi}) has a unique solution ( see (\cite{cv})). \\
	We thus define the family of linear operators $\textbf{U}(t)\text{ }t\geq 0$ in $K$ by the following formula:
	\begin{align*}
		(\textbf{U}(t)\phi)(x,a,s)=\left\lbrace
		\begin{array}{ll}
			X(t)\phi(x,a-t, s-t)\hbox{ if } 0<t<a\hbox{ and }  0<t<s,\\
			X(a)b_{\phi}(x,s-a,t-a) \hbox{ if }0<a<t \hbox{ and } 0<a<s,\\
			0 \hbox{ otherwise }.
		\end{array},
		\right.
	\end{align*}
	The operator $\textbf{U}$ defines a strongly continuous semigroup on $K$ (see (\cite{b9,o}).\\
	We also introduce the input space $\mathcal{U}=K$ and the control operator $\mathcal{B}\in \mathcal{L}(\mathcal{U},K)$ defined by 
	\begin{equation} 
		\mathcal{B}u=mu \text{ } (u\in \mathcal{U}).\label{op1}
	\end{equation}
	
	With above notation, we rewrite the system (\ref{2}) by:
	\begin{equation}
		\left\lbrace
		\begin{array}{ll}
			\dot{y}=\mathcal{A}y+\mathcal{B}u(t)\\
			y(0)=y_{0}.
		\end{array}\right.,
	\end{equation}
	It is already established that, the null controllability in time $T$ of $(\mathcal{A},\mathcal{B})$ is equivalent to the final-state observability in time $T$ of the pair $(\mathcal{A}^{*},\mathcal{B}^{*})$, where $\mathcal{A}^*$ and $\mathcal{B}^*$ are the adjoint operators of $\mathcal{A}$ and $\mathcal{B}$, respectively (see, for instance, \cite[\hbox{ Section } 11.2]{b8}. For that it is important to determine the adjoint of the operator $\mathcal{A}$. We obtain by proceeding as in \cite{dy} the following result:		
	\begin{proposition}
		The operator $(\mathcal{A}^*,D(\mathcal{A}^*))$ in $K$ is defined by 
		\begin{align*}
			D(\mathcal{A}^*):= \begin{cases}
				\varphi\in K,\quad \varphi\in L^2((0,A)\times (0,S);H^{2}(\Omega)),\cr
				\dfrac{\partial y}{\partial a}+\dfrac{\partial y}{\partial s}+L\varphi-\left(\mu_1(a)+\mu_2(s)\right)\varphi \in K,\cr
				\dfrac{\partial\varphi}{\partial\nu}=0,\cr
				\varphi(\cdot,A,\cdot)=0,\quad \varphi(\cdot,S,\cdot)=0,
			\end{cases}
		\end{align*}
		
		and we have, for every $\varphi\in D(\mathcal{A}^*)$,
		
		\begin{align*}
			\mathcal{A}^*\varphi=\Delta \varphi+\dfrac{\partial \varphi}{\partial a}+\dfrac{\partial \varphi}{\partial s}-\left(\mu_1(a)+\mu_2(s)\right)\varphi+\int\limits_{0}^{S}\beta(a,s,\hat{s})\varphi(x,0,\hat{s})ds,
		\end{align*}
	\end{proposition}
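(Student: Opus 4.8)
The plan is to determine $\mathcal{A}^*$ directly from the defining duality relation $\langle \mathcal{A}\varphi,\psi\rangle_K = \langle \varphi,\mathcal{A}^*\psi\rangle_K$, which must hold for every $\varphi\in D(\mathcal{A})$ and every $\psi\in D(\mathcal{A}^*)$. Concretely, I would fix $\varphi\in D(\mathcal{A})$, take a smooth test function $\psi$, write out $\langle\mathcal{A}\varphi,\psi\rangle_K = \int_\Omega\int_0^A\int_0^S[\Delta\varphi - \partial_a\varphi - \partial_s\varphi - (\mu_1+\mu_2)\varphi]\,\psi$, and transfer all the derivatives onto $\psi$ by integration by parts, carefully bookkeeping the boundary contributions. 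The resulting identity reads off simultaneously the formal expression of $\mathcal{A}^*$ (the volume integrand multiplying $\varphi$) and the domain $D(\mathcal{A}^*)$ (the conditions on $\psi$ needed to annihilate every boundary term for all admissible $\varphi$).

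Step by step: for the spatial part I would apply Green's formula twice; the Neumann condition $\partial_\nu\varphi=0$ from $D(\mathcal{A})$ disposes of one boundary integral, and imposing $\partial_\nu\psi=0$ disposes of the other, leaving $\int\varphi\,\Delta\psi$, so that $\Delta$ is formally self-adjoint under Neumann conditions. Integrating by parts in $a$ turns $-\partial_a\varphi$ into $+\partial_a\psi$ and produces the boundary term $-\int_\Omega\int_0^S[\varphi\psi]_{a=0}^{a=A}$, and likewise in $s$ one gets $+\partial_s\psi$ together with $-\int_\Omega\int_0^A[\varphi\psi]_{s=0}^{s=S}$; the zero-order term is unchanged. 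Collecting the volume contributions already yields the claimed part $\Delta\psi+\partial_a\psi+\partial_s\psi-(\mu_1+\mu_2)\psi$.

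The crux is the treatment of the four transport boundary terms using the structural conditions built into $D(\mathcal{A})$. The condition $\varphi(x,a,0)=0$ immediately kills the $s=0$ contribution. The terms at $a=A$ and at $s=S$ are not controlled by $D(\mathcal{A})$, so I would annihilate them by \emph{defining} $D(\mathcal{A}^*)$ to contain $\psi(\cdot,A,\cdot)=0$ and $\psi(\cdot,S,\cdot)=0$; this is precisely the expected flip of the inflow boundary for the backward transport semigroup. The remaining $a=0$ term is where the nonlocal renewal part of $\mathcal{A}^*$ is generated: substituting $\varphi(x,0,s)=\int_0^A\int_0^S\beta(a,\hat s,s)\varphi(x,a,\hat s)\,da\,d\hat s$ and applying Fubini to bring the $\varphi$-integration outside produces $\int_\Omega\int_0^A\int_0^S \varphi(x,a,s)\big[\int_0^S\beta(a,s,\hat s)\psi(x,0,\hat s)\,d\hat s\big]$ after relabeling the dummy size variables. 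I expect this relabeling, which is responsible for the swap $\beta(a,\hat s,s)\mapsto\beta(a,s,\hat s)$ between the direct and adjoint operators, to be the one genuinely error-prone point and the place where the final statement is actually pinned down.

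Finally, to upgrade this formal computation into the precise identification of $(\mathcal{A}^*,D(\mathcal{A}^*))$, I would argue both inclusions: the integration-by-parts identity shows that any $\psi$ meeting the stated boundary and regularity conditions lies in $D(\mathcal{A}^*)$ with the asserted action, while conversely, testing against functions $\varphi$ whose boundary traces at $a=A$ and $s=S$ range over a dense set forces exactly those vanishing conditions on any $\psi\in D(\mathcal{A}^*)$; the $H^2$-regularity in $x$ then follows from interior and boundary elliptic regularity together with the closedness of $\mathcal{A}^*$, exactly as in the reference \cite{dy} invoked just before the statement.
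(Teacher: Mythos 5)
Your proposal is correct and takes essentially the same route as the paper: the paper's own proof is only the citation to \cite{dy}, and the technique there is precisely the duality/integration-by-parts computation you describe, with the Neumann condition handling the spatial boundary terms, the conditions $\psi(\cdot,A,\cdot)=0$ and $\psi(\cdot,\cdot,S)=0$ absorbed into $D(\mathcal{A}^*)$, and the renewal condition at $a=0$ generating the nonlocal term. Your write-up in fact supplies the details the paper omits, including the Fubini step and the relabeling of the size variables that turns $\beta(a,\hat s,s)$ into $\beta(a,s,\hat s)$ in the adjoint.
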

	\begin{proof}
		See \cite{dy} (the same technique allows to prove the Proposition).
	\end{proof}
	According to the Proposition 2.1 the adjoint system of $(\ref{2})$ is given by: 
	\begin{equation}
		\left\lbrace\begin{array}{ll}
			\dfrac{\partial q}{\partial t}-\dfrac{\partial q}{\partial a}-\dfrac{\partial q}{\partial s}-\Delta q+(\mu_1(a)+\mu_2(s))q=\displaystyle\int\limits_{0}^{S}\beta(a,s,\hat{s})q(x,0,\hat{s},t)d\hat{s}&\hbox{ in }Q ,\\ 
			\dfrac{\partial q}{\partial\nu}=0&\hbox{ on }\Sigma,\\ 
			q\left( x,A,s,t\right) =0&\hbox{ in } Q_{S,T} \\
			q(x,a,S,t)=0& \hbox{ in } Q_{S,T}\\
			q\left(x,a,s,0\right)=q_0(x,a,s)&
			\hbox{ in }Q_{A,S};
		\end{array}\right.
		\label{112}
	\end{equation}
	Let $Q'$ be the domain $Q$ without the space variable.\\
	We split the domain $Q'$ as follow:
	\[A_1=\{(a,s,t)\in Q' \text{ such that } 0< t< A-a \text{ and } 0< t< S-s\},\] \[A'_1=\{(a,s,t)\in Q' \text{ such that } S-s>t>A-a>0\text{ or } t>S-s>A-a>0\}\] and
	\[A'_2=\{(a,s,t)\in Q' \text{ such that } A-a>t>S-s>0\text{ or } t>A-a>S-s>0\}.\] To simplify, we denote by $A_2=A'_1\cup A'_2,$ then $Q'=A_1\cup A_2$.\\
	The figures below illustrate $Q'$:
	\begin{figure}[H]
		\begin{subfigure}{0.4\textwidth}
			\begin{overpic}[scale=0.25]{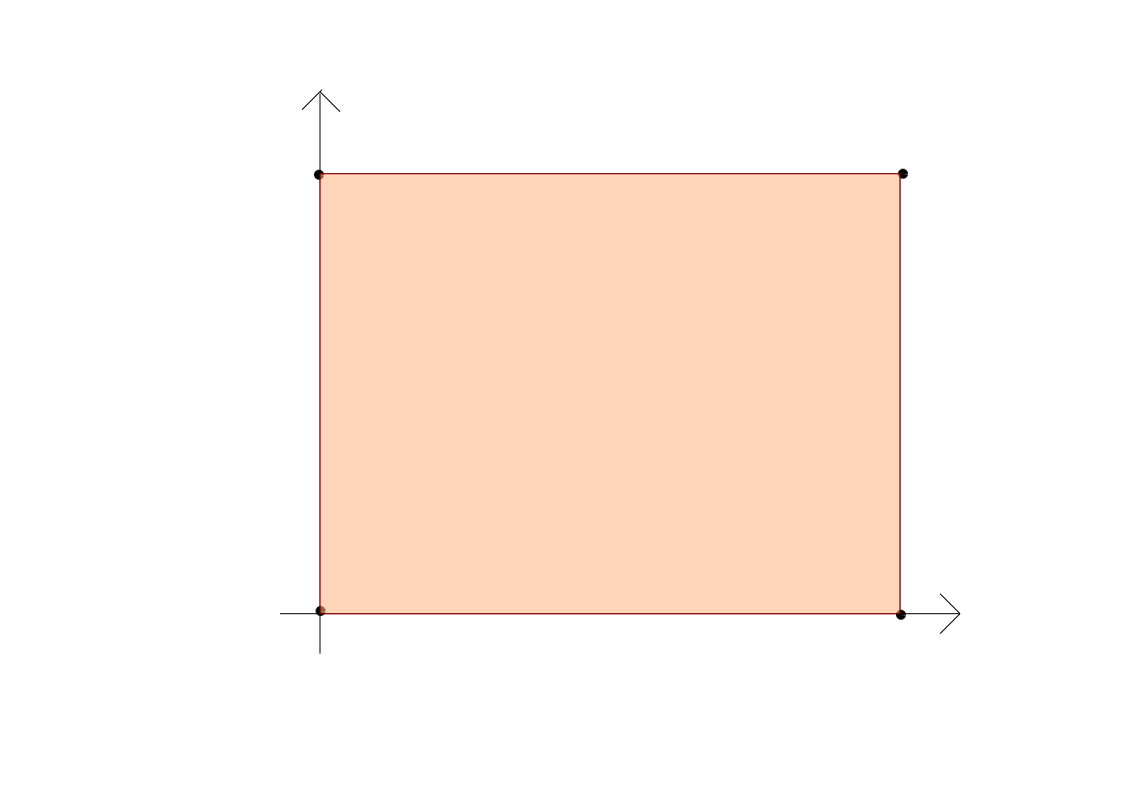}
				\put (21,53) {$S$}	
				\put (50,35) {$A_1$}
				\put (78.5,10.5) {$A$}
			\end{overpic}
			\subcaption{Here is the section $(t=0)$ of $Q'$}
		\end{subfigure}\quad
		\begin{subfigure}{0.6\textwidth}
			\begin{overpic}[scale=0.25]{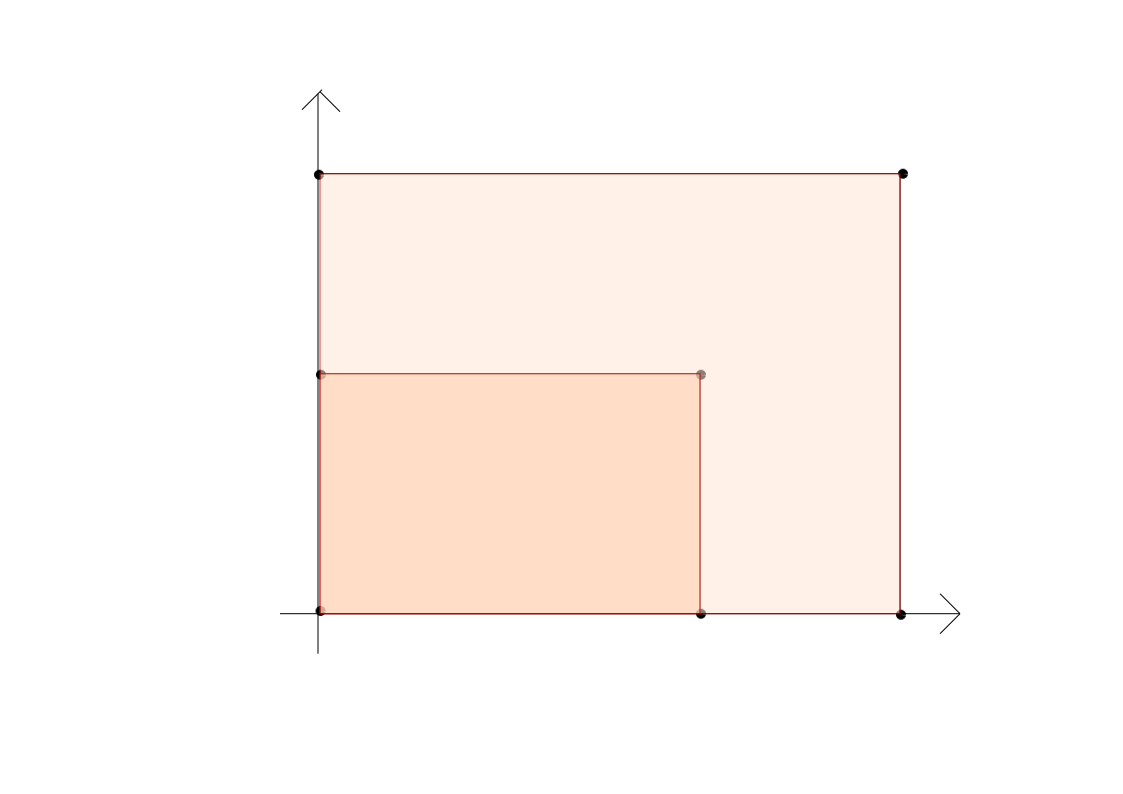}
				\put (22,53) {$S$}
				\put (42,25.5) {$A_1$}
				\put (52,42.5) {$A_2$}				
				\put (79,10.5) {$A$}
				\put (13,36.5) {$S-\alpha$}
				\put (55,10.5) {$A-\alpha$}	
			\end{overpic}
			\subcaption{Here is the section $(t=\alpha)$ of $Q'$ where $\alpha\in (0,\min\{A,S\}).$}
		\end{subfigure}
	\end{figure}
	\quad
	\begin{center}
		\begin{figure}[H]
			\begin{overpic}[scale=0.25]{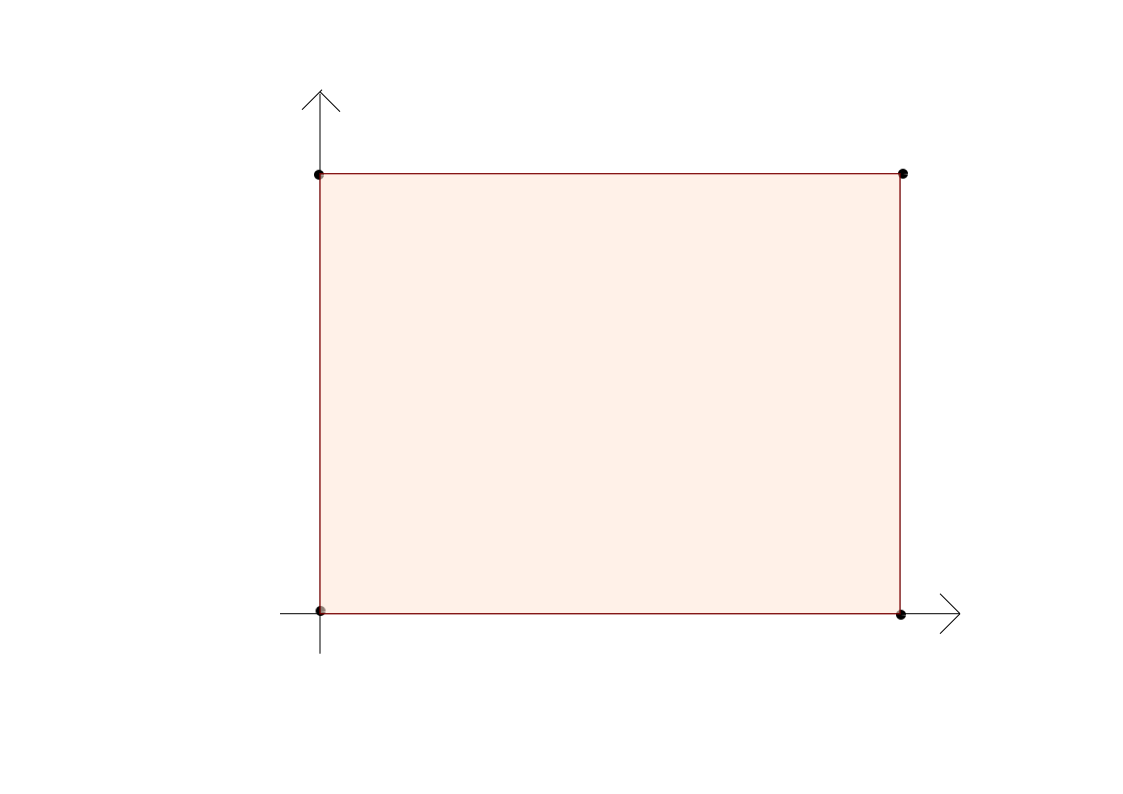}
				\put (20,53) {$S$}	
				\put (50,35) {$A_2$}
				\put (79.5,10.5) {$A$}	
			\end{overpic}
			\subcaption{Here is the section $(t=\alpha)$ where $\alpha\geq\min\{A,S\}$ of $Q'.$}
		\end{figure}
	\end{center}
	Let $L$ the operator define in
	\begin{equation}
		D(\mathcal{A}^*) \text{ by } L(a,s,x)\psi=(-\mu_1(a)-\mu_2(s)+\Delta_x)\psi.\label{33}
	\end{equation}
	The operator $L$ is a infinitesimal generator of strongly continuous semigroup in $K.$\\
	We have the following result:\\
	\begin{proposition}
		For every $q_0\in K, $ under the assumptions $(H_1)-(H_2)-(h1),$ the system $(\ref{2})$ admits a unique solution $q.$ Moreover integrating along the characteristic lines, the solution $q$ of $(\ref{112})$ is given by:
		\begin{align}
			q(t) =\left\lbrace\begin{array}{l}
				q_0(.,a+t,s+t)e^{t L}+\displaystyle\int\limits_{0}^{t}\left(e^{(t-l)L}\displaystyle\int\limits_{0}^{S}\beta(a+t-l,s+t-l,\hat{s})q(x,0,\hat{s},l)d\hat{s})\right)dl \hbox{ in } A_1, \\
				\displaystyle\int\limits_{\max\{t-S+s,t-A+a\}}^{t}\left(e^{(t-l)L}\displaystyle\int\limits_{0}^{S}\beta(a+t-l,s+t-l,\hat{s})q(x,0,\hat{s},l)d\hat{s})\right)dl \hbox{ in } A_2;\label{alp}
			\end{array}\right.
		\end{align}
	\end{proposition}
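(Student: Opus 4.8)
The plan is to construct $q$ by the method of characteristics, reproducing for the adjoint problem the scheme already used for the forward semigroup $\mathbf{U}$ in $(\ref{mop})$--$(\ref{vi})$, and then to settle existence and uniqueness through an abstract Volterra equation of the same type as $(\ref{vi})$.

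First I would fix a target point $(x,a,s,t)$ and follow the characteristic of $(\ref{112})$ backward in time by setting
\[
w(\tau)=q\bigl(x,\,a+t-\tau,\,s+t-\tau,\,\tau\bigr),\qquad \tau\in[\tau^*,t],
\]
where $\tau^*\in[0,t]$ is the parameter at which this characteristic either reaches the initial slice $\{\tau=0\}$ or leaves $Q'$ through one of the faces $\{a=A\}$, $\{s=S\}$. Differentiating and inserting $(\ref{112})$ gives
\[
w'(\tau)=\bigl(\partial_t q-\partial_a q-\partial_s q\bigr)\big|_{(a+t-\tau,\,s+t-\tau,\,\tau)}=L(a+t-\tau,s+t-\tau,x)\,w(\tau)+F(\tau),
\]
with $F(\tau)=\int_0^S\beta(a+t-\tau,s+t-\tau,\hat s)\,q(x,0,\hat s,\tau)\,d\hat s$ and $L$ as in $(\ref{33})$. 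The feature to exploit is that, for frozen $(a,s)$, the mortality $\mu_1(a)+\mu_2(s)$ is a scalar and therefore commutes with $\Delta_x$; hence $L$ generates the propagator denoted $e^{\tau L}$ (a scalar decay times the Neumann heat semigroup), and the variation-of-constants formula applies.

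Next I would read off the two regimes from the geometry of the characteristic. If $(a,s,t)\in A_1$, then $t<A-a$ and $t<S-s$, so the characteristic meets $\{\tau=0\}$ with $w(0)=q_0(\cdot,a+t,s+t)$, and Duhamel's formula yields the first line of $(\ref{alp})$. If $(a,s,t)\in A_2$, the characteristic first exits through $\{a=A\}$ or $\{s=S\}$ at $\tau^*=\max\{t-A+a,\,t-S+s\}>0$; since $q(\cdot,A,\cdot,\cdot)=0$ and $q(\cdot,\cdot,S,\cdot)=0$ by the boundary conditions of $(\ref{112})$, the boundary term $e^{(t-\tau^*)L}w(\tau^*)$ vanishes and only the Duhamel integral from $\tau^*$ to $t$ survives, which is precisely the second line of $(\ref{alp})$.

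Finally, since $(\ref{alp})$ still carries the renewal trace $q(x,0,\hat s,l)$ on its right-hand side, I would close the problem exactly as for the forward semigroup: evaluating $(\ref{alp})$ at $a=0$ produces an abstract linear Volterra integral equation of the second kind for $b_q(s,t):=q(x,0,s,t)$, with kernel assembled from $\beta$ (continuous by $(H_2)$) and the propagator $e^{\tau L}$, posed in $W$ just like $(\ref{vi})$. A standard successive-approximation argument, as in \cite{cv}, then gives a unique $b_q\in W$, and substituting it back into $(\ref{alp})$ defines the unique solution $q$ and establishes the representation. The step I expect to be most delicate is the rigorous treatment of $L$ along the characteristic: its scalar mortality part degenerates as the characteristic approaches $\{a=A\}$ or $\{s=S\}$, where $(H_1)$ and $(h_1)$ make $\mu_1$, $\mu_2$ non-integrable, so one must verify that the decay factor $e^{-\int(\mu_1+\mu_2)}$ stays well defined, that it forces the boundary term to vanish, and that the traces $q(x,0,\cdot,\cdot)$ entering $F$ and the Volterra equation are meaningful in $V$; this is exactly where $(H_1)$--$(h_1)$ and the continuity in $(H_2)$ are used.
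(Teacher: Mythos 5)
Your proposal follows essentially the same route as the paper: parametrize the backward characteristic by $w(\tau)=q(x,a+t-\tau,s+t-\tau,\tau)$, apply Duhamel's formula with the propagator $e^{\tau L}$, and use the vanishing boundary data at $a=A$ and $s=S$ to kill the boundary term on $A_2$, which yields exactly the two lines of $(\ref{alp})$. The only difference is that the paper settles existence and uniqueness by invoking the semigroup generation result (Theorem 2.1), whereas you re-derive it through the Volterra equation for the trace $q(x,0,\cdot,\cdot)$; this is the same machinery the paper already deploys for the forward problem in Section 2, so the argument is equivalent.
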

	where \[e^{tL}=\dfrac{\pi_1(a)}{\pi_1(a-t)}\dfrac{\pi_2(s)}{\pi_2(s-t)}e^{t\Delta} \hbox{ with } \pi_1(a)=\exp\left(-\int\limits_{0}^{a}\mu_1(r)dr\right) \hbox{ and   }\pi_2(s)=\exp\left(-\int\limits_{0}^{s}\mu_2(r)dr\right).\]	
	\begin{proof}	
		The proof of existence is given by the Theorem 2.1\\
		We denoted by 
		\[w(x,\lambda)=q(x,a+t-\lambda,s+t-\lambda,\lambda)\] and 
		then the function $w$ verifies the following system
		\begin{align}
			\left\lbrace\begin{array}{l}
				w'(x,\lambda)=(-\mu_1(a+t-\lambda)-\mu_2(s+t-\lambda)+\Delta) w(\lambda,x)+ f(x,\lambda)\\
				\frac{\partial w}{\partial \nu}=0\\
				w(0,x)=q(x,a+t,s+t,0),
			\end{array}\right.
		\end{align}
		with \[f(\lambda,x)=\int\limits_{0}^{S}\beta(a+t-\lambda,s+t-\lambda,\hat{s})q(x,0,\hat{s},\lambda)d\hat{s}.\]\\
		The solution of the homogeneous equation is given by
		\[w_H(\lambda,x)=Ce^{\lambda L}.\]
		We notice that $$q(x,a,s,t)=w(x,t).$$\\
		\textbf{For the taking into account of the initial condition,} we consider the domain $A_1.$
		Using the Duhamel formula, we obtain 
		\[q(x,a,s,t)= w(x,t)=e^{tL}w(x,0)+\]\[\int\limits_{0}^{t}\left(e^{(t-\alpha)L}\int\limits_{0}^{S}\beta(a+t-\alpha,s+t-\alpha,\hat{s})q(x,0,\hat{s},\alpha)d\hat{s}\right)d\alpha.\]
		As
		\[w(0,x)=q(x,a+t,s+t,0)=q_0(x,a+t,s+t),\]
		then
		\[q(x,a,s,t)=q_{0}(x,a+t,s+t)e^{t L}+\int\limits_{0}^{t}\left(e^{(t-\alpha)L}\int\limits_{0}^{S}\beta(a+t-\alpha,s+t-\alpha,\hat{s})q(x,0,\hat{s},\alpha)d\hat{s}\right)d\alpha\]	
		in $A_1.$\\
		\textbf{Taking into account the boundary condition in $a.$}\\
		For the boundary condition in $\{a=A\}$ we use the set
		\[A'_1=\{(a,s,t)\in Q' \text{ such that } S-s>t>A-a> 0\text{ or } t>S-s>A-a> 0\},\]
		then using the Duhalmel formula (boundary condition in age is $q(x,A,s,t)$) we obtain 
		\begin{equation}
			q(x,a,s,t)=e^{(A-a)L}w(t-(A-a),x)+
			\int\limits_{t-A+a}^{t}\left(e^{(t-l)L}\int\limits_{0}^{S}\beta(a+t-l,s+t-l,\hat{s})q(x,0,\hat{s},l)d\hat{s})\right)dl. \label{souff}
		\end{equation}
		But $$w(t-(A-a),x)=q(x,A,s-a+A,a+t-A)=0,$$ then 
		\[q(x,a,s,t)=\int\limits_{t-A+a}^{t}\left(e^{(t-l)L}\int\limits_{0}^{S}\beta(a+t-l,s+t-l,\hat{s})q(x,0,\hat{s},l)d\hat{s})\right)dl .\]	
		\textbf{Taking into account of the boundary condition in $s.$}\\
		For the boundary condition in $\{s=S\}$ we use the set 
		\[A'_2=\{(a,s,t)\in Q' \text{ such that } A-a>t>S-s> 0\text{ or } t>A-a>S-s> 0\},\]	then
		using the Duhamel formula, we obtain:
		\[q(x,a,s,t)=e^{(S-s)L}w(t-(S-s),x)+\int\limits_{t+s-S}^{t}\left(e^{(t-l)L}\int\limits_{0}^{S}\beta(a+t-l,s+t-l,\hat{s})q(x,0,\hat{s},l)d\hat{s})\right)dl .\]
		As before $w(t+s-S,x)=0$ (boundary condition in size), then 
		\begin{equation}
			q(x,a,s,t)=\int\limits_{t+s-S}^{t}\left(e^{(t-l)L}\int\limits_{0}^{S}\beta(a+t-l,s+t-l,\hat{s})q(x,0,\hat{s},l)d\hat{s})\right)dl \hbox{ in }A'_2. \label{souf}
		\end{equation} 
	\end{proof}
	In the rest of the paper we will adopt the following representation of the solution
	\begin{align}
		q(t) = 
		\left\lbrace\begin{array}{l}
			q_0(.,a+t,s+t)e^{t L}+\displaystyle\int\limits_{0}^{t}\left(e^{(t-l)L}\int\limits_{0}^{S}\beta(a+t-l,s+t-l,\hat{s})q(x,0,\hat{s},l)d\hat{s})\right)dl \text{ in } A_1, \\
			\displaystyle\int\limits_{\max\{t-A+a,  t-S+s\}}^{t}\left(e^{(t-l)L}\int\limits_{0}^{S}\beta(a+t-l,s+t-l,\hat{s})q(x,0,\hat{s},l)d\hat{s})\right)dl \text{ in } A_2.\label{alp}
		\end{array}\right.
	\end{align}
	Indeed, we have:
	\[(a,s,t)\in A'_1\Leftrightarrow 0<t+s-S< t+a-A<t\text{ or } t+s-S<0<t+a-A,\] and
	\[(a,s,t)\in A'_2\Leftrightarrow 0<t+a-A<t+s-S<t  \text{ or } t+a-A<0<t+s-S,\] So we notice that
	\[\max\{t-A+a,  t-S+s\}=t-A+a \quad in\quad A^{'}_{1}.\] 
	and 
	\[\max\{t-A+a,  t-S+s\}=t-S+s \quad in\quad A^{'}_{2}.\] Then we obtain:  
	\begin{equation}
		q(x,a,s,t)=\displaystyle\int\limits_{\max\{t-A+a,  t-S+s\}}^{t}\left(e^{(t-l)L}\int\limits_{0}^{S}\beta(a+t-l,s+t-l,\hat{s})q(x,0,\hat{s},l)d\hat{s})\right)dl \text{ in } A_2
	\end{equation}
	\section{An Observability Inequality}
	As mentioned above, the null-controllability of a pair $(\mathcal{A},\mathcal{B})$ is equivalent to the final state observability of the pair $(\mathcal{A}^*,\mathcal{B}^*),$ see \cite{b8}. Recall that the final-state observability of $(\mathcal{A}^*,\mathcal{B}^*)$ is defined as:
	\begin{definition}
		\cite[\hbox{ Definition }6.1.1] {b8}\\
		The pair $(\mathcal{A}^*,\mathcal{B}^*)$ is final observable in time $T$ if there exists a $K_T>0$ such that 
		\begin{equation}
			\displaystyle\int\limits_{0}^{T}\|\mathcal{B}^*\textbf{U}_{t}^{*}q_{0}\|^2\geq K_{T}^{2}\|\textbf{U}_{T}^{*}q_0\|^2 \text{ } (q_{0}\in D(\mathcal{A}^*)).
		\end{equation}
	\end{definition}
	\subsection{Proof of the Theorem 1.1}
	We consider the following adjoint system of $(\ref{2})$ given by:
	\begin{equation}
		\left\lbrace
		\begin{array}{ll}
			\dfrac{\partial q}{\partial t}-\dfrac{\partial q}{\partial a}-\dfrac{\partial q}{\partial s}-\Delta q+(\mu_1(a)+\mu_2(s))q=\int\limits_{0}^{S}\beta(a,s,\hat{s})q(x,0,\hat{s},t)d\hat{s}&\hbox{ in }Q ,\\ 
			\dfrac{\partial q}{\partial\nu}=0&\hbox{ on }\Sigma,\\ 
			q\left( x,A,s,t\right) =0&\hbox{ in } Q_{S,T} \\
			q(x,a,S,t)=0&\hbox{ in } Q_{S,T}\\
			q\left(x,a,s,0\right)=q_0(x,a,s)&\hbox{ in }Q_{A,S}.
		\end{array}\right.\label{3}
	\end{equation} 
	We recall that $$T_1=\max\{a_1+S-s_2,s_1\}\hbox{ and }T_0=\max\{S-s_2,s_1\}.$$
	In view of \cite[\hbox{ Theorem } 11.2.1]{b8} ,
	the result of the Theorem 1.1 is then reduced to the following theorem which will be proved later.
	\begin{theorem}
		Under the assumption of the Theorem 1.1, for every $q_0\in D(\mathcal{A}^*)$
		the pair $(\mathcal{A}^{*},\mathcal{B}^{*})$ is final-state observable for every $T>A-a_2+T_1+T_0.$\\
		In other words, for every $T>A-a_2+T_1+T_0$ there exist $K_T>0$ such that the solution $q$ of (\ref{3}) satisfies \begin{equation}\displaystyle\int\limits_{0}^{S}\int\limits_{0}^{A}\int_{\Omega}q^2(x,a,s,T)dxdads\leq K_T\displaystyle\int\limits_{0}^{T}\int\limits_{s_1}^{s_2}\int\limits_{a_1}^{a_2}\int_{\omega}q^2(x,a,s,t)dxdadsdt.\end{equation}
	\end{theorem}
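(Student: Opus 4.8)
The plan is to prove the observability inequality by transporting, along the characteristics of the adjoint system, the classical final-state observability estimate for the heat semigroup $e^{t\Delta}$ from the subdomain $\omega$, using throughout the representation formula of Proposition 2.2. First I would record the two analytic ingredients. Restricted to a single characteristic line $\{s-a=\mathrm{const}\}$ and read in the real time variable, the solution satisfies a genuine heat equation $\partial_t v=\Delta v-(\mu_1+\mu_2)v+(\text{renewal source})$ with Neumann conditions and a bounded zeroth-order term; moreover, by the explicit formula for $e^{tL}$, the propagator along characteristics is the heat semigroup multiplied by the survival ratios $\pi_1,\pi_2$, which are bounded above by $1$ and bounded below on every compact age/size sub-interval avoiding the singular endpoints $a=A$, $s=S$. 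The second ingredient is the final-state observability of the pure Neumann heat equation from $\omega$, bounding $\|v(t_2)\|_{L^2(\Omega)}^2$ by $\int_{t_1}^{t_2}\|v(t)\|_{L^2(\omega)}^2\,dt$.

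The heart of the argument is then a geometric covering step, organized by the conserved quantity $c=s-a$. Fixing $(a,s)$ and tracing its backward characteristic $\tau\mapsto(a+\tau,s+\tau,T-\tau)$, two situations occur. If $a<a_2$, the age sweeps upward through $(a_1,a_2)$ while still inside the cylinder at positive times; on the sub-interval where simultaneously the age lies in $(a_1,a_2)$ and the size in $(s_1,s_2)$ I would apply the heat observability on $\omega$ to bound the value at the end of that crossing, then propagate forward by the contractive semigroup up to time $T$. This bounds $\|q(\cdot,a,s,T)\|_{L^2(\Omega)}^2$ by the observation integral plus a renewal remainder. If instead $a\ge a_2$, the characteristic exits through $\{a=A\}$ without revisiting the window, so by Proposition 2.2 the final value is produced entirely by the renewal source $\int_0^S\beta(a,s,\hat s)q(x,0,\hat s,l)\,d\hat s$; I would control this by observing the birth traces $q(x,0,\hat s,l)$, whose own backward characteristics $\tau\mapsto(\tau,\hat s+\tau,l-\tau)$ do cross the window. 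The role of the bound $T>A-a_2+T_1+T_0$ is precisely to guarantee that all these crossings occur at positive times and last long enough to apply the heat estimate: the summand $A-a_2$ is the age left to travel up to $A$, while $T_0=\max\{s_1,S-s_2\}$ and $T_1=\max\{a_1+S-s_2,s_1\}$ measure the size-traversal times $s_1$ and $S-s_2$ and the age entry $a_1$ needed for the size coordinate to enter $(s_1,s_2)$ in synchrony with the age window.

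The main obstacle I anticipate is closing the estimate for the nonlocal renewal term, which couples all sizes through the integral against $\beta$ and in which the birth trace $q(x,0,\cdot,\cdot)$ is not observed directly. Here the hypotheses $(H3)$, giving $\beta(a,s,\hat s)=0$ for $a<\hat a$, together with $a_1<\hat a$ are essential: they ensure that the backward characteristic of a birth trace passes through the observation window before reaching the fertile ages at which new sources are created, which decouples the observation of the birth trace from the renewal it feeds. Combining this with the boundedness of $\beta$ and $L^\infty$ bounds on $e^{tL}$, I would absorb the renewal remainders through a Volterra/iteration argument and assemble, after integrating in $(a,s)$, a single constant $K_T$. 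The delicate part is the bookkeeping that the chosen $T$ renders every $c=s-a$ admissible, and that the constants stay uniform as characteristics approach the singular endpoints $a=A$ and $s=S$.
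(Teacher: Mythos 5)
Your overall strategy --- transporting the parabolic observability estimate from $\omega$ along the characteristics, estimating the birth trace $q(x,0,\cdot,\cdot)$ separately, and absorbing the renewal contribution by a Duhamel-type argument --- is the same route the paper takes (its Propositions 3.1, 3.3--3.5 are exactly these three blocks, with the absorption carried out through the splitting $q=u_1+u_2$ rather than a Volterra iteration). However, there is a genuine gap at the point you rely on most: the claim that the backward characteristics of the birth traces, $\tau\mapsto(\tau,\hat s+\tau,l-\tau)$, ``do cross the window.'' This is false whenever $\hat s\ge s_2-a_1$: for such sizes the age reaches $a_1$ only after the size has already left $(s_1,s_2)$, so the trace at $(0,\hat s,l)$ is never observed, for any $T$. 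The paper closes this case not by observation but by showing that these traces vanish identically for $l>a_1+S-s_2$ (Step 1 of its Proposition 3.1): the backward characteristic from $(0,\hat s,l)$ exits through the boundary $s=S$ after a backward time $S-\hat s<S-s_2+a_1\le T_0+a_1<\min\{a_2,\hat a\}$, during which the age stays below $\hat a$, so by $(H_3)$ the renewal source is zero along the whole characteristic and the representation formula of Proposition 2.2 gives $q(x,0,\hat s,l)=0$. This is precisely where the standing hypothesis $T_0<\min\{a_2-a_1,\hat a-a_1\}$ of Theorem 1.1 is consumed, and your sketch never invokes it; you assign to $(H_3)$ and $a_1<\hat a$ only the ``decoupling'' role, which is the easier half of their job.

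A secondary flaw is the dichotomy at $a=a_2$. For $a$ slightly below $a_2$ the backward characteristic from $(a,s,T)$ spends only a time $a_2-a$ inside the age window, so the constant $c_1e^{c_2/(t_1-t_0)}$ in the parabolic observability estimate blows up as $a\uparrow a_2$ and your ``apply the heat observability on the crossing'' step does not yield a uniform bound. The paper instead observes directly only the ages $a\in(0,a_2-T_0)$ (Propositions 3.3 and 3.4), and for $a>a_2-T_0+\delta$ it shows that the homogeneous part $u_1(\cdot,T)$ vanishes outright, because $T>A-a_2+T_1+T_0$ forces those characteristics to have already exited through $a=A$; their value at time $T$ is then purely renewal-driven and is folded back into the birth-trace estimate via the $L^2$ bound on $u_2$. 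These two points are exactly where the threshold $A-a_2+T_1+T_0$ is used, so as written the proposal does not yet explain why that particular time suffices.
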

	For the proof we proceed as in \cite{abst}.
	The principle is based on the estimation of the non-local term $\displaystyle\int\limits_{0}^{S}\beta(a,s,\hat{s})q(x,0,\hat{s},t)d\hat{s}$. Hence the following Proposition:
	\begin{proposition}
		Let us assume the assumption $(H_1)-(H_2)$ and let  $$a_1<\hat{a}\hbox{, }T_0<\min\{\hat{a}-a_1,a_2-a_1\}\hbox{ and }T_1<\eta<T.$$ Then there exists a constant $C>0$ such that for every $q_{0}\in K,$ the solution $q$ of the system $(\ref{3})$ verifies the following inequality:
		\begin{equation}
			\displaystyle\int\limits_{\eta}^{T}\int\limits_{0}^{S}\int_{\Omega}q^2(x,0,s,t)dxdsdt\leq C\displaystyle\int\limits_{0}^{T}\int\limits_{s_1}^{s_2} \int\limits_{a_1}^{a_2}\int_{\omega}q^2(x,a,s,t)dxdadsdt \label{Bq}.
		\end{equation}
	\end{proposition}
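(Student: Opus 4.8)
The plan is to control the age-zero trace $q(\cdot,0,s,t)$ by propagating it along the characteristics of $(\ref{3})$ into the observation cylinder $\omega\times(a_1,a_2)\times(s_1,s_2)$, and then to pass from the interior datum on $\omega$ to the full norm on $\Omega$ by invoking the classical final-state observability of the Neumann heat semigroup (so that no Carleman estimate for the coupled age--size--space system is needed). The decisive structural input is $(H_3)$: since $\beta$ vanishes for ages below $\hat a$, the nonlocal right-hand side of $(\ref{3})$ is switched off along any characteristic whose age coordinate stays below $\hat a$. Thus, fixing a target $(s',t')$ with $t'\in(\eta,T)$ and tracing the characteristic through $(0,s',t')$ backward in time (age and size both increasing at unit speed), the representation of Proposition 2.2 collapses to the \emph{source-free} identity $q(\cdot,0,s',t')=e^{\rho L}\,q(\cdot,\rho,s'+\rho,t'-\rho)$, valid for every $\rho<\hat a$ with $s'+\rho<S$. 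Here $e^{\rho L}=\tfrac{\pi_1(\rho)}{\pi_1(0)}\tfrac{\pi_2(s'+\rho)}{\pi_2(s')}e^{\rho\Delta}$ is, up to survival factors that stay bounded above and below as long as the path remains in $\{a<\hat a<A,\ s<S\}$, the heat semigroup.

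First I would dispose of the sizes that the observation region cannot reach. If $s'>S-\hat a$, the backward characteristic meets $\{s=S\}$ at age $S-s'<\hat a$, where $q$ vanishes by the boundary condition of $(\ref{3})$; since the source is still off at that age, the identity above forces $q(\cdot,0,s',t')=0$. Because $T_0\ge S-s_2$ together with $T_0<\hat a-a_1$ gives $s_2-a_1>S-\hat a$, the two intervals $(0,s_2-a_1)$ and $(S-\hat a,S)$ already cover all of $(0,S)$; it therefore remains only to estimate $q(\cdot,0,s',t')$ for $s'\in(0,s_2-a_1)$, i.e. for those targets genuinely reachable from $(a_1,a_2)\times(s_1,s_2)$.

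For such $s'$ I would read the family $\phi(\tau):=q(\cdot,\rho^\ast-\tau,\,s'+\rho^\ast-\tau,\,t'-\rho^\ast+\tau)$ along the characteristic as a solution of the forward heat equation $\phi'=L\phi$ with $\phi(\rho^\ast)=q(\cdot,0,s',t')$ as final state. The running point $(\rho^\ast-\tau,\,s'+\rho^\ast-\tau)$ lies in the observation rectangle precisely when $\tau$ ranges over an interval of positive length, and the constraint $\eta>T_1=\max\{a_1+S-s_2,s_1\}$ keeps the associated times inside $(0,T)$. Applying the final-state observability inequality for the heat semigroup on this interval, and absorbing the uniformly controlled survival factors, produces a constant $C$, independent of the target, with $\int_\Omega q^2(x,0,s',t')\,dx\le C\int\!\!\int_\omega q^2(x,\rho^\ast-\tau,\,s'+\rho^\ast-\tau,\,t'-\rho^\ast+\tau)\,dx\,d\tau$.

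Finally I would integrate this bound over $(s',t')\in(0,S)\times(\eta,T)$, undo the unit-Jacobian change of variables $(a,s',t')\mapsto(a,\,s'+a,\,t'-a)$, and apply Fubini to recognise the right-hand side as a portion of $\int_0^T\!\int_{s_1}^{s_2}\!\int_{a_1}^{a_2}\!\int_\omega q^2$, which is exactly $(\ref{Bq})$. The main obstacle is the uniformity of the observability constant $C$: one must verify that, as $(s',t')$ sweeps the admissible set, the observation window never degenerates and the mortality factors never blow up. This is precisely what the hypotheses $a_1<\hat a$ and $T_0<\min\{\hat a-a_1,a_2-a_1\}$ secure, since they keep every relevant characteristic segment strictly inside $\{a<\hat a\}$ (so the nonlocal term stays off and $\pi_1,\pi_2$ remain bounded) while leaving a uniformly positive age-window on which the heat observability can be applied.
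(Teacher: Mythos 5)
Your proposal follows essentially the same route as the paper's proof: use $(H_3)$ to switch the nonlocal term off below age $\hat a$, remove the mortality by the exponential substitution $\tilde q=q\exp(-\int_0^a\mu_1-\int_0^s\mu_2)$, observe that $q(x,0,s,t)$ vanishes for large $s$ because the backward characteristic exits through $\{s=S\}$ before the source turns on, and estimate the remaining sizes by applying the Fursikov--Imanuvilov final-state observability of the Neumann heat semigroup (Proposition 3.2) along the characteristic segment crossing $\omega\times(a_1,a_2)\times(s_1,s_2)$, then integrating in $(s,t)$ with a unit-Jacobian change of variables. The only substantive difference is that the paper carries out explicitly the case analysis you defer to the ``uniformity of the constant'': the age at which the characteristic enters the observation rectangle is $a_1$ when $s'\geq s_1-a_1$ but $s_1-s'$ when $s'<s_1-a_1$, which is why the paper splits $(0,s_2-a_1)$ into subintervals, distinguishes $\sup\{s_1,a_1\}=a_1$ from $\sup\{s_1,a_1\}=s_1$, and chooses the observation window $(t_0,t_1)$ accordingly.
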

	For the proof of the Proposition 3.1, we first recall the following observability inequality for parabolic equation (see, for instance, Imanuvilov and Fursikov \cite{b10}):
	\begin{proposition}
		Let $T>0,$ $t_0$ and $t_1$ such that $0<t_0<t_1<T.$ Then for every $w_0\in L^2(\Omega),$ the solution $w$ of of the initial and boundary problem \
		\begin{equation}
			\left\lbrace
			\begin{array}{l}
				\dfrac{\partial w(x,\lambda)}{\partial \lambda}-\Delta w(x,\lambda)=0\text{ in } \Omega\times(t_0,T)\\
				\dfrac{\partial w}{\partial \nu}=0 \text{ on } \partial\Omega\times(t_0,T)	\\
				w(x,t_0)=w_0(x)\text{ in }\Omega
			\end{array}\right.,
		\end{equation}
		satisfies the estimate
		\[\displaystyle\int_{\Omega}w^2(T,x)dx\leq\int_{\Omega}w^2(x,t_1)dx\leq c_1e^{\dfrac{c_2}{t_1-t_0}}\int\limits_{t_0}^{t_1}\int_{\omega}w^2(x,\lambda)dxd\lambda,\]
		where the constant $c_1$ and $c_2$ depend on $T$ and $\Omega.$\\
	\end{proposition}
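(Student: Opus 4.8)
The plan is to prove the two chained inequalities separately, since they are of a completely different nature: the left-hand inequality $\int_\Omega w^2(x,T)\,dx \leq \int_\Omega w^2(x,t_1)\,dx$ is an elementary dissipation estimate, while the right-hand inequality $\int_\Omega w^2(x,t_1)\,dx \leq c_1 e^{c_2/(t_1-t_0)}\int_{t_0}^{t_1}\int_\omega w^2\,dx\,d\lambda$ is the genuine final-state observability inequality and carries essentially all of the difficulty.

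For the first inequality, I would multiply the heat equation by $w$ and integrate over $\Omega$. Using Green's formula together with the homogeneous Neumann condition $\partial_\nu w=0$ on $\partial\Omega$, the boundary term vanishes and one obtains
\[\frac{1}{2}\frac{d}{d\lambda}\int_\Omega w^2(x,\lambda)\,dx = -\int_\Omega |\nabla w(x,\lambda)|^2\,dx \leq 0.\]
Hence $\lambda \mapsto \int_\Omega w^2(x,\lambda)\,dx$ is non-increasing on $(t_0,T)$, and since $t_1<T$ this gives at once $\int_\Omega w^2(x,T)\,dx \leq \int_\Omega w^2(x,t_1)\,dx$.

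The second inequality is the classical observability of the heat equation on the time window $(t_0,t_1)$ from the subdomain $\omega$, which I would establish through the global Carleman estimate of Fursikov--Imanuvilov \cite{b10}. The construction requires a weight function: pick $\omega_0$ with $\overline{\omega_0}\subset\omega$ and a function $\psi \in C^2(\overline{\Omega})$ with $\psi>0$ in $\Omega$, $|\nabla\psi|>0$ on $\overline{\Omega}\setminus\omega_0$, and $\partial_\nu\psi\leq 0$ on $\partial\Omega$ (this last sign condition is what adapts the estimate to the Neumann boundary condition, making the boundary contributions in the Carleman integration by parts have the favorable sign). Introduce the singular time weight $\theta(\lambda)=[(\lambda-t_0)(t_1-\lambda)]^{-1}$ and the weights $\varphi=\theta\,e^{\tau\psi}$, $\alpha=\theta\,(e^{2\tau\|\psi\|_\infty}-e^{\tau\psi})$. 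For $s$ and $\tau$ large one then gets
\[\int_{t_0}^{t_1}\!\!\int_\Omega\big(s^3\theta^3 w^2+s\theta|\nabla w|^2\big)e^{-2s\alpha}\,dx\,d\lambda \leq C\,s^3\int_{t_0}^{t_1}\!\!\int_\omega \theta^3 w^2 e^{-2s\alpha}\,dx\,d\lambda.\]
Restricting the left integral to a central subinterval $[t_0+\delta,t_1-\delta]$ on which $\theta$ and $e^{-2s\alpha}$ are bounded below, bounding the right-hand weight from above, and combining with the dissipation argument of the first part to pass from the weighted integral to the full quantity $\int_\Omega w^2(x,t_1)\,dx$, one arrives at the stated inequality. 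The explicit factor $e^{c_2/(t_1-t_0)}$ is produced by the quotient of the extremal values of $e^{-2s\alpha}$, which blows up like $1/(t_1-t_0)$ as the endpoints are approached, and $c_1,c_2$ inherit their dependence on $T$ and $\Omega$ from the choice of $\psi$ and the fixed parameters $s,\tau$.

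The main obstacle lies entirely in this second step, and two points require care. First, one must construct $\psi$ with the correct sign of $\partial_\nu\psi$ so that the integration by parts underlying the Carleman estimate leaves no uncontrolled boundary terms; here the Neumann case genuinely differs from the Dirichlet case of the classical references. Second, one must track how the constant degenerates in $t_1-t_0$ in order to recover precisely the form $c_1 e^{c_2/(t_1-t_0)}$. An alternative route that bypasses Carleman estimates altogether is the Lebeau--Robbiano strategy, combining the spectral inequality for the eigenfunctions of the Neumann Laplacian with the dissipation estimate of the first part; this also yields observability with a constant of the same exponential type in $1/(t_1-t_0)$, but the Carleman approach is the most direct given the reference \cite{b10} already invoked in the statement.
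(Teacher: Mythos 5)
The paper never actually proves this proposition: it is recalled as a known result, with a pointer to Fursikov and Imanuvilov \cite{b10}, and then used as a black box in the proof of Proposition 3.1. Your proposal therefore supplies a proof where the paper supplies only a citation, and it follows exactly the route of that citation: the dissipation half (multiply by $w$, integrate over $\Omega$, use the Neumann condition to kill the boundary term) is correct, and the observability half is the standard global Carleman estimate with singular time weight $\theta(\lambda)=[(\lambda-t_0)(t_1-\lambda)]^{-1}$, with the Neumann case handled by the sign condition $\partial_\nu\psi\le 0$ on the auxiliary weight. One quantitative point needs fixing, and you partly flag it yourself: if $s$ and $\tau$ are genuinely \emph{fixed}, then at the midpoint of $(t_0,t_1)$ one has $\theta\approx 4/(t_1-t_0)^2$, so the quotient of extremal values of $e^{-2s\alpha}$ produces a factor of order $e^{c/(t_1-t_0)^{2}}$, which is strictly weaker than the claimed $c_1e^{c_2/(t_1-t_0)}$ as $t_1-t_0\to 0$. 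To obtain the stated form you must take $s$ comparable to the Carleman admissibility threshold, i.e.\ proportional to $(t_1-t_0)+(t_1-t_0)^{2}$; then $s\theta$ at the midpoint is of order $1/(t_1-t_0)+1$ and the exponent comes out as $c_2/(t_1-t_0)$, as claimed. With that adjustment your argument is complete and is essentially the proof in the reference the paper invokes; the Lebeau--Robbiano alternative you mention is also viable and yields a constant of the same type.
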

	\begin{proof}{Proof of the Proposition 3.1}\\
		We recall that $T_0=\max\{s_1,S-s_2\}$ and $T_1=\max\{a_1+S-s_2,s_1\}.$ We have two scenarios:\\
		\textbf{first scenario} \[	T_1<a_1+T_0 \hbox{ iff } T_0=s_1\hbox{ and }a_1>0;\]
		\textbf{second scenario}
		\[T_1=a_1+T_0 \hbox{ if } T_0=S-s_2\hbox{ and }a_1\geq 0.\]
		%	\begin{align*}
		%\text{(first situation )}	\left\lbrace\begin{array}{l}
		%	T_0=s_1>S-s_2 \hbox{ and then }a_1+T_0>a_1+S-s_2;\\
		%	\hbox{ if }a_1>0 \hbox{ then }a_1+T_0>s_1.\\
		
		%	\end{array}\right.
		%\end{align*}
		%	\begin{align*}
		%	\text{(second situation )}	\left\lbrace\begin{array}{l}
		%	T_0=S-s_2\geq s_1 \hbox{ and then }a_1+T_0=a_1+S-s_2;\\
		%	\hbox{ if }a_1\geq 0 \hbox{ then }a_1+S-s_2>s_1.\\
		%	\hbox{ Finally } 
		%	\end{array}\right.
		%\end{align*}
		For the first scenario, the Proposition 3.7 of \cite{abst} gives the result. We explain it in what follows.\\
		Let $$\mathbf{A}\psi=\partial_{s}\psi-\Delta\psi+\mu_2(s)\psi$$ the operator define in \[D(\mathbf{A})=\{\varphi/ \mathbf{A}\varphi\in L^{2}(\Omega\times (0,S)), \quad \varphi(x,0)=0\quad  \dfrac{\partial\varphi}{\partial\nu}|_{\partial\Omega}=0 \},\], \[\mathbf{B}=\mathbf{1}_{(s_1,s_2)}\mathbf{1}_{\omega} \hbox{ and } \quad \mathcal{B}=\mathbf{1}_{(a_1,a_2)}\mathbf{B}.\]\\
		The operator $\mathcal{A}$ can be rewritten by: \[\mathcal{A}\varphi=-\partial_{a}\varphi-\mathbf{A}\varphi-\mu_1(a)\varphi\hbox{ for } \varphi\in D(\mathcal{A}),\]
		and the adjoint $\mathcal{A}^*$ of the operator $\mathcal{A}$  is defined by:
		\[\mathcal{A}^*\varphi=\partial_{a}\varphi-\mathbf{A}^*\varphi-\mu_1(a)\varphi +\int\limits_{0}^{S}\beta(a,s,\hat{s})\varphi(x,0,\hat{s})d\hat{s}\hbox{ for }  \varphi\in D(\mathcal{A^*})\]
		We can prove that the operator $(\mathbf{A}^*,\mathbf{B}^*)$ is final state observable for every $T>T_0$ (see for instance \cite{dy} and \cite[\hbox{ Proposition 4 }]{yac1}).\\
		Using the operators thus defined, we have the result of the Proposition 3.1 with $$a_1+\max\{s_1,S-s_2\}<\eta<T$$ by applying the result of Proposition 3.7 of \cite{abst}.\\
		Let's establish the result in the case where $a_1>0\hbox{ and }T_0=s_1$ that mean $T_1<a_1+T_0.$\\
		For $a\in (0,\hat{a})$ (assumption $H_3$) we have $\beta(a,s,\hat{s})=0$, therefore the system (\ref{3}) is can be written by
		\begin{equation}
			\left\lbrace \begin{array}{l}
				\dfrac{\partial q}{\partial t}-\dfrac{\partial q}{\partial a}-
				\dfrac{\partial q}{\partial s}-\Delta q+(\mu_1(a)+\mu_2(s)) q=0\text{ in }\Omega\times(0,\hat{a})\times(0,S)\times (0,T),\\ 
				q(x,a,s,0)=q_0(x,a,s) \text{ in }\Omega\times(0,\hat{a})\times(0,S).
			\end{array}
			\right.
			\label{ad11}
		\end{equation}	
		We denote by \[\tilde{q}(x,a,s,t)=q(x,a,s,t)\exp\left(-\int\limits_{0}^{a}\mu_1(\alpha)d\alpha-\int\limits_{0}^{s}\mu_2(r)dr\right).\] then $\tilde{q}$ satisfies
		\begin{align}
			\dfrac{\partial \tilde{q}}{\partial t}-\dfrac{\partial \tilde{q}}{\partial a}-\dfrac{\partial \tilde{q}}{\partial s}-\Delta\tilde{q}=0\text{ in }\Omega\times(0,\hat{a})\times(0,S)\times (0,T). \label{ad1}
		\end{align}
		Let $S^*<S;$ (the real $S^*$ verifying $q(x,0,s,t)=0\hbox{ in }\Omega\times(S^*,S)\times(\eta,T)$ to be explained later) proving the inequality (\ref{Bq}) lead also to show that,
		there exits a constant $C>0$ such that the solution $\tilde{q}$ of (\ref{ad11}) satisfies 
		\begin{equation}
			\int\limits_{\eta}^{T}\int\limits_{0}^{S^*}\int\limits_{\Omega}q^2(x,0,s,t)dxdsdt\leq C\int\limits_{0}^{T}\int\limits_{s_1}^{s_2}\int\limits_{a_1}^{a_2}\int_{\omega}\tilde{q}^2(x,a,s,t)dxdadsdt. \label{cci}
		\end{equation}
		Indeed, we have 
		\[
		\int\limits_{\eta}^{T}\int\limits_{0}^{S^*}\int_{\Omega}q^2(x,0,s,t)dxdsdt\leq e^{2\int\limits_{0}^{\hat{a}}\mu_1(r)dr+2\int\limits_{0}^{S^*}\mu_2(r)dr}\int\limits_{\eta}^{T}\int\limits_{0}^{S}\int_{\Omega}\tilde{q}^2(x,0,s,t)dxdsdt\]
		and
		\[\int\limits_{0}^{T}\int\limits_{s_1}^{s_2}\int\limits_{a_1}^{a_2}\int_{\omega}\tilde{q}^2(x,a,s,t)dxdadsdt\leq C(e^{2\|\mu_1\|_{L^1(0,\hat{a})}+2\|\mu_2\|_{L^1(0,S^*)}})\int\limits_{0}^{T}\int\limits_{s_1}^{s_2}\int\limits_{a_1}^{a_2}\int_{\omega}q^2(x,a,s,t)dxdadsdt.
		\]
		We consider the following characteristics trajectory $\gamma(\lambda)=(t-\lambda,t+s-\lambda,\lambda).$ If $t-\lambda=0$ the backward characteristics starting from $(0,s,t).$ If $T<T_1,$ we can not have information about all the characteristics (see Figure 5 6). So we choose $T>T_1.$\\
		Without loss the generality, let us assume here $\eta< T\leq \min\{\hat{a},a_2\}.$\\
		The proof will be in two steps:\\
		\textbf{Step 1 : In this step, we show that there exists $\delta>0$ such that the non local term $$q(x,0,s,t)=0\text{ for all }s\in (s_2-a_1,S)\text{ and }t\in(a_1+S-s_2,T).$$}\\
		First, we suppose $s_2>a_1.$
		According to the assumptions $T_0<\min\{\hat{a},a_2\}-a_1$, 
		%there exists $\alpha>0$ such that $a_1+S-s_2+\alpha=a_2,$ therefore, for any $\delta$ such that $0<\delta<\alpha,$
		it's easy to proof (using the representation of $q$ by the semi group method) that $q(x,0,s,t)=0$ for all $$s\in (s_2-a_1,S)\hbox{ and } t\in (a_1+S-s_2, T)$$ (see Figure 4).\\
		Indeed, for every $$(s,t)\in (s_2-a_1,S)\times (a_1+S-s_2,T)\hbox{ and }T_0<\min\{a_2,\hat{a}\}-a_1$$ we have, $$t-A<t-S+s\hbox{ and }S-s<S-s_2+a_1<t$$ then,
		\[q(x,0,s,t)=\int\limits_{t-S+s}^{t}\left(e^{(t-l)L}\int\limits_{0}^{S}\beta(t-l,s+t-l,\hat{s})q(x,0,\hat{s},l)d\hat{s})\right)dl.\]
		Moreover, we have $$0<t-l<S-s\hbox{ and as }s\in (s_2-a_1,S)$$ then $$0<t-l<S-s<S-s_2+a_1<\min\{a_2,\hat{a}\}.$$ The fertility $\beta$ being assumed to be zero on $(0,\min\{a_2,\hat{a}\}),$ then $q(x,0,s,t)=0.$\\
		Likewise, if $s_2<a_1,$ the non local term verifies $$q(x,0,s,t)=0\hbox{ for all }s\in (0,S)\hbox{ and } t\in (S-s_2+a_1, T).$$\\
		\textbf{Step 2: Estimation of the non local term $$q(x,0,s,t)\hbox{ for }(s,t)\in (0,s_2-a_1-\delta)\times (\eta,T)\text{ with }T_1<T$$ where $\delta>0$ and $\max\{s_1,a_1\}<\eta<T$}.\\
		Notice that the case $s_2<a_1$ is irrelevant because in this case \[q(x,0,s,t)=0\hbox{ a.e. in }\Omega\times(0,S)\times(S-s_2+a_1,T).\]
		Let $\delta>0$ can be as small as you want and $s\in (0,s_2-a_1-\delta)$ and $t\in (\eta,T).$ From the Figure 4 we can see that all the characteristic starting at $(0,s,t)$ goes through by the observation domain if $t>\sup\{a_1,s_1\}$ and $0\leq s<s_2-a_1.$ \\ 
		We will now focus on the estimate for $s\in (0,s_2-a_1-\delta)$ and $t\in (\eta,T).$\\
		Two situations arise: \\
		\textbf{Case 1: $\sup\{s_1,a_1\}=a_1$}\\
		%Here we choose $\delta$ small enough such that $a_1+\delta<T$ and we suppose that $t\in (a_1+\delta,T).$\\
		We denote by:\\
		\[w(x,\lambda)=\tilde{q}(x,t-\lambda,s+t-\lambda,\lambda) \text{ ; }(x\in\Omega\hbox{, }\lambda\in (0,t))\]
		Then $w$ satisfies:
		\begin{align}
			\left\lbrace
			\begin{array}{l}
				\dfrac{\partial w(x,\lambda)}{\partial \lambda}-\Delta w(x,\lambda)=0\text{ in } ( \Omega\times (0,t))\\
				\dfrac{\partial w}{\partial \nu}=0 \text{ on } \partial\Omega\times (0,t)	\\
				w(x,0)	=\tilde{q}(x,t,s+t,0)\text{ in }\Omega
			\end{array}
			\right.,
		\end{align}
		Using the Proposition 3.2  with $0<t_0<t_1<t$ we obtain:\\
		\[\int_{\Omega}w^2(x,t)dx\leq\int_{\Omega}w^2(x,t_1)dx\leq c_1e^{\dfrac{c_2}{t_1-t_0}}\int\limits_{t_0}^{t_1}\int_{\omega}w^2(x,\lambda)dxd\lambda.\]
		That is equivalent to
		\[\int_{\Omega}\tilde{q}^2(x,0,s,t)dx\leq c_1e^{\frac{c_2}{t_1-t_0}}\int\limits_{t_0}^{t_1}\int_{\omega}\tilde{q}^2(x,t-\lambda,s+t-\lambda,\lambda)dxd\lambda=c_1e^{\frac{c_2}{t_1-t_0}}\int\limits_{t-t_1}^{t-t_0}\int_{\omega}\tilde{q}^2(x,\alpha,s+\alpha,t-\alpha)dxd\alpha.\]	
		Then for $t_0=t-a_1-\delta$ and $t_1=t-a_1,$ we obtain\\
		\[\int_{\Omega}\tilde{q}^2(x,0,s,t)dx\leq c_1e^{\frac{c_2}{\delta}}\int\limits_{a_1}^{a_1+\delta}\int_{\omega}\tilde{q}^2(x,\alpha,s+\alpha,t-\alpha)dxd\alpha.\]
		Integrating with respect $s$ over $(0,s_2-a_1-\delta)$ we get
		\[\int\limits_{0}^{s_2-a_1-\delta}\int_{\Omega}\tilde{q}^2(x,0,s,t)dxds\leq c_1e^{\frac{c_2}{\delta}}
		\int\limits_{a_1}^{a_1+\delta}\int\limits_{a_1}^{s_2}\int_{\omega}\tilde{q}^2(x,a,l,t-a)dxdlda.\]
		Finaly, integrating with respect $t$ over $(\eta,T)$, we obtain
		\[\int\limits_{\eta}^{T}\int\limits_{0}^{s_2-a_1-\delta}\int_{\Omega}\tilde{q}^2(x,0,s,t)dxdsdt\leq c_1e^{\frac{c_2}{\delta}} \int\limits_{a_1}^{a_1+\delta}\int\limits_{s_1}^{s_2}\int\limits_{\eta-a}^{T-a}\int_{\omega}\tilde{q}^2(x,a,s,t)dxdtdsda\]\[\leq C(\delta)\int\limits_{0}^{T} \int\limits_{a_1}^{a_2}\int\limits_{s_1}^{s_2}\int_{\omega}\tilde{q}^2(x,a,s,t)dxdsdadt.\]
		Then
		\begin{equation}
			\int\limits_{\eta}^{T}\int\limits_{0}^{s_2-a_1-\delta}\int_{\Omega}\tilde{q}^2(x,0,s,t)dxdsdt\leq C(\delta)\int\limits_{0}^{T} \int\limits_{s_1}^{s_2}\int\limits_{a_1}^{a_2}\int_{\omega}\tilde{q}^2(x,a,s,t)dxdsdadt.
		\end{equation}
		\textbf{Case 2: $\sup\{s_1,a_1\}=s_1$}\\ 
		In this case, we split $(0,s_2-a_1-\delta)$ in two sub intervals $(0,s_1-a_1)\cup (s_1-a_1,s_2-a_1-\delta).$\\
		For $s\in (s_1-a_1,s_2-a_1-\delta)$, we denote by:\\
		\[w(x,\lambda)=\tilde{q}(x,t-\lambda,s+t-\lambda,\lambda) \text{ ; }(x\in\Omega\text{ , }\lambda\in(0,t))\]
		Then $w$ satisfies:
		\begin{equation}
			\left\lbrace
			\begin{array}{l}
				\dfrac{\partial w(x,\lambda)}{\partial \lambda}-\Delta w(x,\lambda)=0\text{ in } \Omega\times (0,t)\\
				\dfrac{\partial w}{\partial \nu}=0 \text{ on } \partial\Omega\times (0,t)	\\
				w(x,0)	=\tilde{q}(x,t,s+t,0)\text{ in }\Omega
			\end{array}\right.,
		\end{equation}
		Using the Proposition 3.2  with $0<t_0<t_1<t$ we obtain:\\
		\[\int_{\Omega}w^2(x,t)dx\leq\int_{\Omega}w^2(x,t_1)dx\leq c_1e^{\dfrac{c_2}{t_1-t_0}}\int\limits_{t_0}^{t_1}\int_{\omega}w^2(x,\lambda)dxd\lambda.\]
		That is equivalent to
		\[\int_{\Omega}\tilde{q}^2(x,0,s,t)dx\leq c_1e^{\frac{c_2}{t_1-t_0}}\int\limits_{t_0}^{t_1}\int_{\omega}\tilde{q}^2(x,t-\lambda,s+t-\lambda,\lambda)dxd\lambda=c_1e^{\frac{c_2}{t_1-t_0}}\int\limits_{t-t_1}^{t-t_0}\int_{\omega}\tilde{q}^2(x,\alpha,s+\alpha,t-\alpha)dxd\alpha.\]	
		%	Here $\delta>0$ is the real such that $s_1+\delta<T\quad and \quad s_1+\delta<s_2$
		and we denote by $t_0=t-a_1-\delta$ and $t_1=t-a_1,$ we obtain\\
		\[\int_{\Omega}\tilde{q}^2(x,0,s,t)dx\leq C(\delta)\int\limits_{a_1}^{a_1+\delta}\int_{\omega}\tilde{q}^2(x,\alpha,s+\alpha,t-\alpha)dxd\alpha.\]
		Integrating with respect $s$ over $(s_1-a_1,s_2-a_1-\delta)$ we get
		\[\int\limits_{s_1-a_1}^{s_2-a_1-\delta}\int_{\Omega}\tilde{q}^2(x,0,s,t)dxds\leq C(\delta)
		\int\limits_{a_1}^{a_1+\delta}\int\limits_{s_1}^{s_2}\int_{\omega}\tilde{q}^2(x,a,l,t-a)dxdlda.\]
		Finally, integrating with respect $t$ over $(\eta,T)$, we obtain
		\[\int\limits_{\eta}^{T}\int\limits_{s_1-a_1}^{s_2-s_1-\delta}\int_{\Omega}\tilde{q}^2(x,0,s,t)dxdsdt\leq C(\delta) \int\limits_{a_1}^{a_1+\delta}\int\limits_{s_1}^{s_2}\int\limits_{\eta-a}^{T-a}\int_{\omega}\tilde{q}^2(x,a,s,t)dxdtdsda\]\[\leq C(\delta)\int\limits_{0}^{T} \int\limits_{a_1}^{a_2}\int\limits_{s_1}^{s_2}\int_{\omega}\tilde{q}^2(x,a,s,t)dxdsdadt.\]
		Then
		\begin{equation}
			\int\limits_{\eta}^{T}\int\limits_{s_1-a_1}^{s_2-a_1-\delta}\int_{\Omega}\tilde{q}^2(x,0,s,t)dxdsdt\leq C(\delta)\int\limits_{0}^{T} \int\limits_{s_1}^{s_2}\int\limits_{a_1}^{a_2}\int_{\omega}\tilde{q}^2(x,a,s,t)dxdsdadt.\label{rr}
		\end{equation}
		\textbf{The case $s\in (0,s_1-a_1)$} \\ 
		Here again we have two situations:
		\[s_2-s_1> s_1-a_1\hbox{ and }s_2-s_1<s_1-a_1.\]
		If $s_2-s_1<s_1-a_1,$ we split $(0,s_1-a_1)$ as the following
		\[(0,s_2-s_1)\cup(s_2-s_1,s_1-a_1).\]
		Here we will do the calculations only in the case $s_2-s_1>s_1-a_1.$\\
		We denote by:\\
		\[w(x,\lambda)=\tilde{q}(x,t-\lambda,s+t-\lambda,\lambda) \text{ ; }(x\in\Omega\hbox{, }\lambda\in (0,t))\]
		Then $w$ satisfies:
		\begin{align}
			\left\lbrace
			\begin{array}{l}
				\dfrac{\partial w(x,\lambda)}{\partial \lambda}-\Delta w(x,\lambda)=0\text{ in } ( \Omega\times (0,t))\\
				\dfrac{\partial w}{\partial \nu}=0 \text{ on } \partial\Omega\times (0,t)	\\
				w(x,0)	=\tilde{q}(x,t,s+t,0)\text{ in }\Omega
			\end{array}
			\right.,
		\end{align}
		Using the Proposition 3.2  with $0<t_0<t_1<t$ we obtain:\\
		\[\int_{\Omega}w^2(x,t)dx\leq\int_{\Omega}w^2(x,t_1)dx\leq c_1e^{\dfrac{c_2}{t_1-t_0}}\int\limits_{t_0}^{t_1}\int_{\omega}w^2(x,\lambda)dxd\lambda.\]
		That is equivalent to
		\[\int_{\Omega}\tilde{q}^2(x,0,s,t)dx\leq c_1e^{\dfrac{c_2}{t_1-t_0}}\int\limits_{t_0}^{t_1}\int_{\omega}\tilde{q}^2(x,t-\lambda,s+t-\lambda,\lambda)dxd\lambda=c_1e^{\frac{c_2}{t_1-t_0}}\int\limits_{t-t_1}^{t-t_0}\int_{\omega}\tilde{q}^2(x,\alpha,s+\alpha,t-\alpha)dxd\alpha.\]	
		Then for $t_0=t-s_1-\kappa$ where $\kappa>0$ and $t_1=t-s_1,$ we obtain\\
		\[\int_{\Omega}\tilde{q}^2(x,0,s,t)dx\leq C(\kappa)\int\limits_{s_1}^{s_1+\kappa}\int_{\omega}\tilde{q}^2(x,\alpha,s+\alpha,t-\alpha)dxd\alpha.\]
		Integrating with respect $s$ over $(0,s_1-a_1)$ we get
		\[\int\limits_{0}^{s_1-a_1}\int_{\Omega}\tilde{q}^2(x,0,s,t)dxds\leq C(\kappa)
		\int\limits_{s_1}^{s_1+\kappa}\int\limits_{l}^{l+s_1-a_1}\int_{\omega}\tilde{q}^2(x,a,l,t-a)dxdlda.\] Then
		\[\int\limits_{0}^{s_1-a_1}\int_{\Omega}\tilde{q}^2(x,0,s,t)dxds\leq C(\kappa)
		\int\limits_{s_1}^{s_1+\kappa}\int\limits_{s_1}^{\kappa+2s_1-a_1}\int_{\omega}\tilde{q}^2(x,a,l,t-a)dxdlda.\]
		Finaly, integrating with respect $t$ over $(\eta,T)$, we obtain
		\[\int\limits_{\eta}^{T}\int\limits_{0}^{s_1-a_1}\int_{\Omega}\tilde{q}^2(x,0,s,t)dxdsdt\leq C(\kappa) \int\limits_{s_1}^{s_1+\kappa}\int\limits_{s_1}^{2\kappa+s_1-a_1}\int\limits_{\eta-a}^{T-a}\int_{\omega}\tilde{q}^2(x,a,s,t)dxdtdsda.\]
		%\[\leq C(\delta)\int\limits_{0}^{T} \int\limits_{a_1}^{a_2}\int\limits_{s_1}^{s_2}\int_{\omega}\tilde{q}^2(x,a,s,t)dxdsdadt.\]
		We choose $\kappa$ small enough such that \[s_1+\kappa<T<\min\{a_2,\hat{a}\}\]
		Then, we get
		\begin{equation}
			\int\limits_{\eta}^{T}\int\limits_{0}^{s_1-a_1}\int_{\Omega}\tilde{q}^2(x,0,s,t)dxdsdt\leq C(\kappa)\int\limits_{0}^{T} \int\limits_{s_1}^{s_2}\int\limits_{a_1}^{a_2}\int_{\omega}\tilde{q}^2(x,a,s,t)dxdsdadt.
		\end{equation}
		Finally,
		combining $(\ref{rr})$ and the fact that $$q(x,0,s,t)=0\hbox{ for } t\in (S-s_2+a_1+\delta,+\infty)\hbox{ } s\in (s_2-a_1-\delta,S)$$, we obtain:
		\[\int\limits_{\eta}^{T}\int\limits_{0}^{S}\int_{\Omega}\tilde{q}^2(x,0,s,t)dxdsdt\leq C(\kappa)\int\limits_{0}^{T} \int\limits_{s_1}^{s_2}\int\limits_{a_1}^{a_2}\int_{\omega}\tilde{q}^2(x,a,s,t)dxdsdadt\] where $\max\{S-s_2+a_1,s_1\}<\eta<T.$
		\begin{remark}
			In all the cases, when $\delta \longrightarrow 0\hbox{ or }\kappa \longrightarrow0 \hbox{ or }\eta \longrightarrow T_1$ we have $C(\eta,\kappa,\delta)\longrightarrow +\infty.$ 
			%Therefore $C\longrightarrow +\infty$ when $\eta\longrightarrow T_1.$
		\end{remark}
		\begin{remark}
			For the case $\eta<\min\{\hat{a},a_2\}\leq T,$ we split the interval $(\eta,T)$ in two sub-intervals $(\eta,\min\{\hat{a},a_2\})\cup  (\min\{\hat{a},a_2\},T)$.\\
			In $(\eta,\min\{\hat{a},a_2\}),$ we proceed in the same way. In $(\min\{\hat{a},a_2\},T)$ we proceed as in the proof of the Proposition 3.7 in \cite{abst}.
		\end{remark}
	\end{proof}
	\textbf{Illustration of cases where the non-local term $q(x,0,s,t)$ cannot be estimated\\Assumption $T_0<\min\{a_2,\hat{a}\}$}
	\begin{figure}[H]
		\begin{overpic}[scale=0.6]{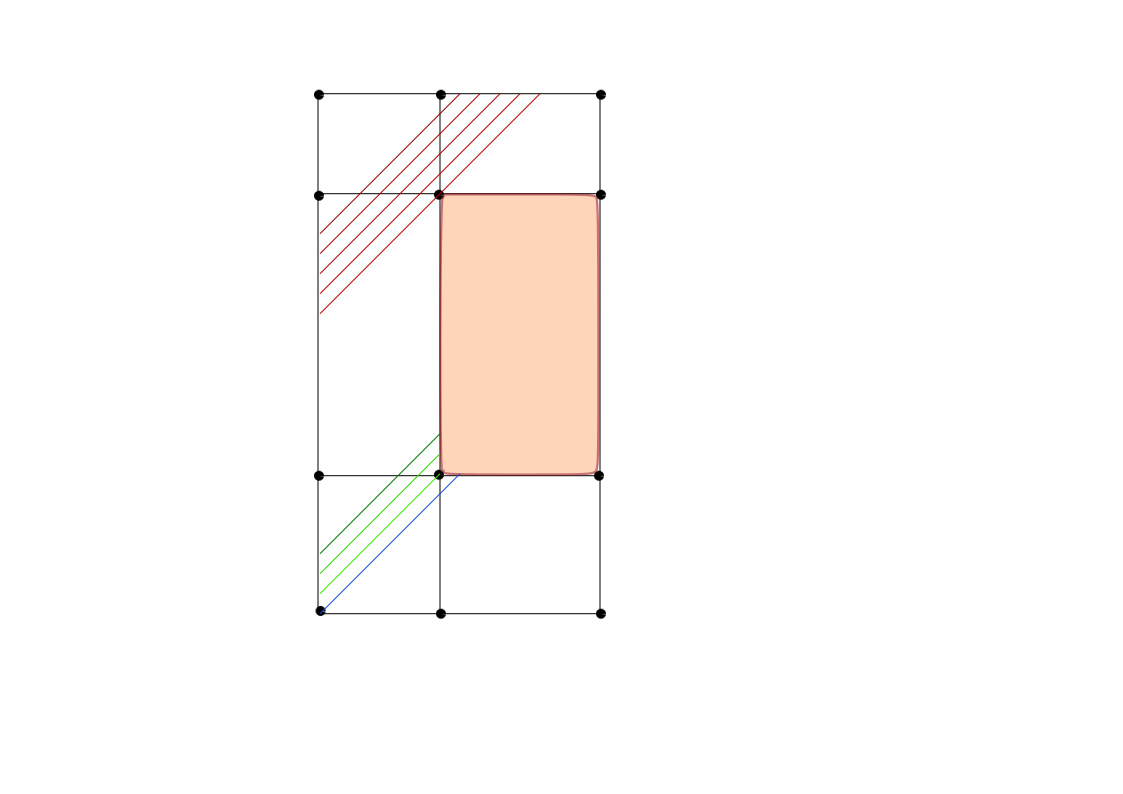}
			\put (25,62) {$S$}	
			%\put (69.5,12) {$A$}
			\put (53.5,14) {$\hat{a}$}
			\put (25,28) {$s_1$}
			\put (25,53) {$s_2$}				
			\put (38.7,14) {$a_1$}
			\put (42.5,35.5) {$Q_{1}$}		
		\end{overpic}
		\caption{An illustration of the estimate of $q(x,0,s,t)$\\Here we have chosen $a_2=\hat{a}$. Since $t>T_1$ all the backward characteristics starting from $(0,s,t)$ enters the observation domain (the green and blue lines), or without the domain by the boundary $s=S$ (red line).}
	\end{figure}
	\begin{figure}[H]
		\begin{overpic}[scale=0.4]{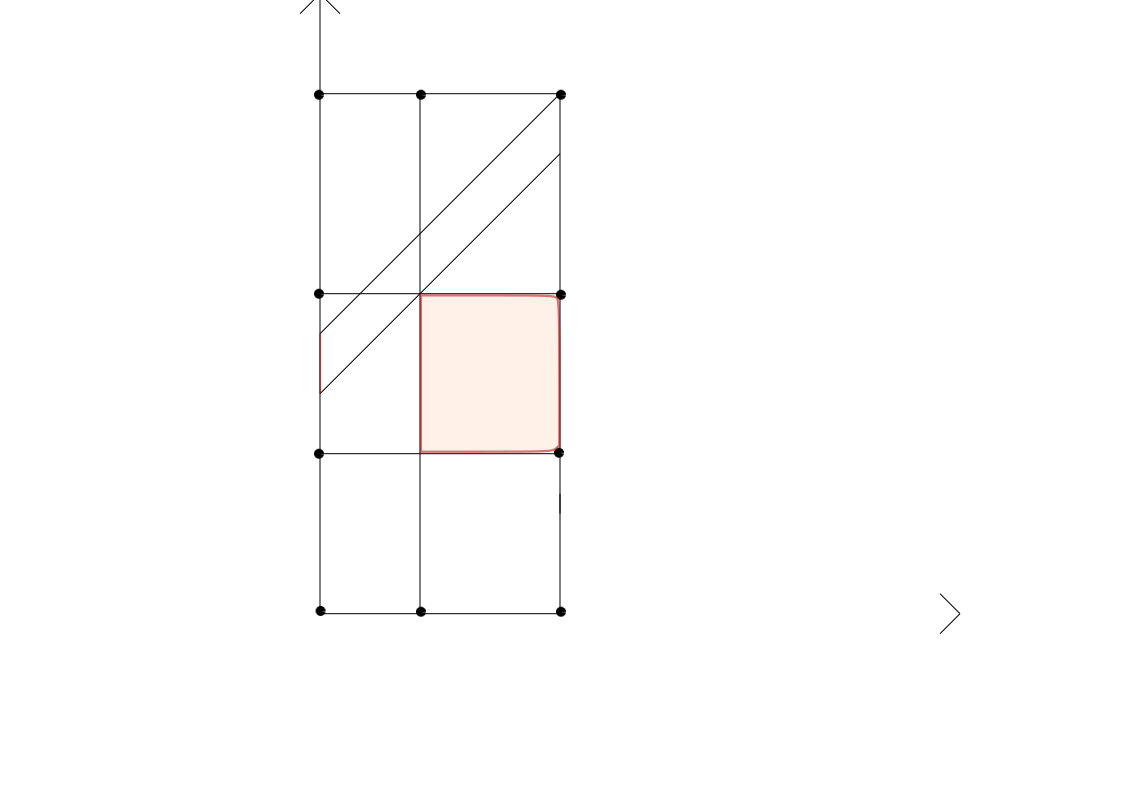}
			\put (25,62) {$S$}	
			\put (69.5,12) {$A$}
			\put (49.5,12.5) {$\hat{a}$}
			\put (25,30) {$s_1$}
			\put (25,44) {$s_2$}
			\put (25.5,35) {$c_1$}
			\put (25.5,41) {$c_2$}				
			\put (36.5,13) {$a_1$}
			\put (42.5,35.5) {$Q_{1}$}		
		\end{overpic}
		\caption{For $S-s_2>\hat{a}-a_1,$ we can not estimate $q (x,0,s,t)$ for $s\in(c_1,c_2)$ by the characteristic method. Indeed for even $t>a_1+S-s_2$ the characteristics starting at $(0,s,t)$ without the domain by the boundary $t=0 \hbox{ or enter in the region } a>\hat{a},$ without going through the observation domain.}
	\end{figure}
	\begin{figure}[H]
		\begin{overpic}[scale=0.4]{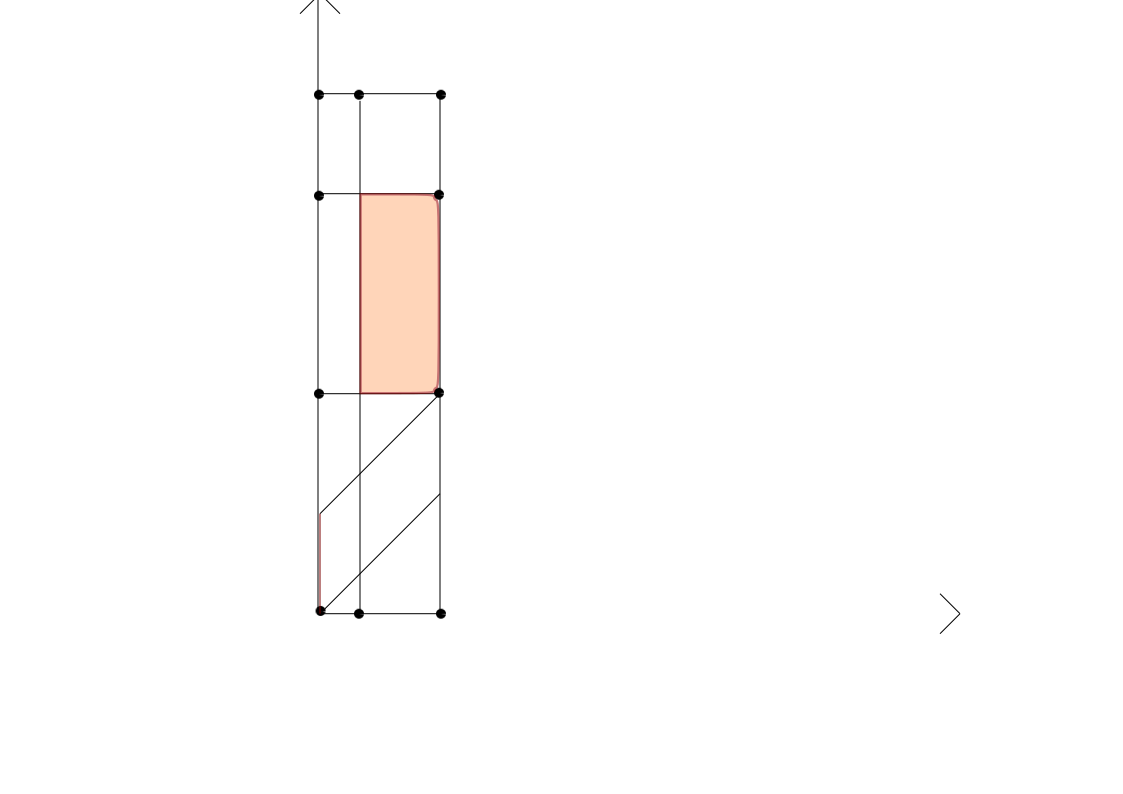}
			\put (25,62) {$S$}	
			\put (38,12.5) {$\hat{a}$}
			\put (25,35) {$s_1$}
			\put (25,53) {$s_2$}				
			\put (31.5,13) {$a_1$}
			\put (34.5,44.5) {$Q_{1}$}	
			\put (25.5,16) {$c_1$}
			\put (25.5,24) {$c_2$}
		\end{overpic}
		\caption{For the second case if $s_1>\hat{a}-a_1,$ we can not estimate $q (x,0,s,t)$ for $s\in (c_1,c_2)$ by the characteristic method. Indeed for even $t>a_1+S-s_2$ the characteristics starting at $(0,s,t)$ without the domain by the boundary $t=0 \hbox{ or enter in the region } a>\hat{a},$ without going through the observation domain.}
	\end{figure}
	\begin{proposition}
		Let us assume the hypothesis of Theorem 3.1. Let, $T>a_1+T_0.$ Then for every $q_0\in L^2(\Omega\times (0,A)\times (0,S)),$ the solution $q$ of the system $(\ref{3}),$ obeys
		\begin{equation}
			\displaystyle\int\limits_{0}^{S}\int\limits_{0}^{a_1}\int_{\Omega}q^2(x,a,s,T)dxdads\leq C_T\displaystyle\int\limits_{0}^{T}\int\limits_{s_1}^{s_2}\int\limits_{a_1}^{a_2}\int_{\omega}q^2(x,a,s,t)dxdadsdt \label{C81}
		\end{equation}
	\end{proposition}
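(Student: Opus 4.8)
The plan is to estimate the final slice $q(\cdot,a,s,T)$ for $a\in(0,a_1)$ by following the backward characteristics of \eqref{3}, in the same spirit as the proof of Proposition 3.1, but now anchoring the characteristics at the final time $t=T$ instead of at the renewal boundary $a=0$. As there, I would first remove the mortality by setting
\[
\tilde q(x,a,s,t)=q(x,a,s,t)\exp\left(-\int_0^a\mu_1(\alpha)\,d\alpha-\int_0^s\mu_2(r)\,dr\right),
\]
so that on the age range $(0,\hat a)$ the function $\tilde q$ solves the source-free equation $\partial_t\tilde q-\partial_a\tilde q-\partial_s\tilde q-\Delta\tilde q=0$, the non-local right-hand side being switched off by $(H_3)$. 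Since $a_1<\hat a$, the entire strip $a\in(0,a_1)$, as well as a small overshoot $a\in(a_1,a_1+\kappa)$ into the control band, lie in this source-free zone; this is what makes the characteristic computation clean.

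Fix $a\in(0,a_1)$ and $s\in(0,S)$ and follow the characteristic $\tau\mapsto(a+\tau,s+\tau,T-\tau)$ issued backward from $(a,s,T)$. It reaches age $a_1$ at the time $l^\ast:=T+a-a_1$ and size $\sigma:=s+a_1-a$; since $T>a_1+T_0\ge a_1$ one has $l^\ast>T_0\ge0$, so this occurs at a strictly positive time. On the first segment $\tau\in(0,a_1-a)$ the age stays below $a_1<\hat a$, so $w(x,\lambda):=\tilde q(x,a+T-\lambda,s+T-\lambda,\lambda)$ solves a pure Neumann heat equation $\partial_\lambda w-\Delta w=0$. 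As the heat semigroup is an $L^2(\Omega)$-contraction, I obtain $\|\tilde q(\cdot,a,s,T)\|_{L^2(\Omega)}\le\|\tilde q(\cdot,a_1,\sigma,l^\ast)\|_{L^2(\Omega)}$, transferring the estimate to the point where the characteristic enters the control band.

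It then remains to control $\|\tilde q(\cdot,a_1,\sigma,l^\ast)\|_{L^2(\Omega)}$. For this I prolong the same characteristic into $a\in(a_1,a_1+\kappa)$, with $0<\kappa<\min\{a_2-a_1,\hat a-a_1\}$ and $\kappa<l^\ast$, where the equation is again source-free, and apply the parabolic final-state observability of Proposition 3.2 on the window $(l^\ast-\kappa,l^\ast)$. This gives
\[
\int_\Omega\tilde q^2(x,a_1,\sigma,l^\ast)\,dx\le c_1e^{c_2/\kappa}\int_{l^\ast-\kappa}^{l^\ast}\int_\omega\tilde q^2\bigl(x,a_1+l^\ast-\lambda,\sigma+l^\ast-\lambda,\lambda\bigr)\,dx\,d\lambda,
\]
whose right-hand side is genuinely an observation in $\omega\times(a_1,a_2)\times(s_1,s_2)$ precisely when $\sigma\in(s_1,s_2)$. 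Integrating in $a\in(0,a_1)$ and in the admissible sizes, bounding $\tilde q^2\le q^2$ inside the box (since $\mu_1,\mu_2\ge0$) and picking up the finite factor $\exp\bigl(2\|\mu_1\|_{L^1(0,\hat a)}+2\|\mu_2\|_{L^1(0,S^\ast)}\bigr)$ on the left, would produce \eqref{C81} with $C_T=c_1e^{c_2/\kappa}$ times this factor.

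The step I expect to be the genuine obstacle is the size bookkeeping, namely covering every $s\in(0,S)$ and not only those with $\sigma\in(s_1,s_2)$. I would treat this exactly as in the case analysis of Proposition 3.1: for $\sigma<s_1$ one delays the observation window in age until the size coordinate has risen into $(s_1,s_2)$ (splitting the remaining size interval once more when $s_2-s_1<s_1-a_1$). For sizes close to $S$ the argument is in fact automatic: if $s>a+S-\hat a$ then the whole backward integration range keeps the age argument of $\beta$ below $\hat a$, so by $(H_3)$ and the boundary condition $q(\cdot,\cdot,S,\cdot)=0$ one has $q(\cdot,a,s,T)=0$; this simultaneously discards the degenerate sizes and fixes a threshold $S^\ast<S$ that keeps the mortality factor finite (recall $\int_0^S\mu_2=+\infty$). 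The conditions $a_1<\hat a$ and $T_0<\min\{a_2-a_1,\hat a-a_1\}$ are precisely what guarantee that each relevant characteristic spends enough age-length in the band $(a_1,a_2)$ to meet the size window before leaving the cylinder, and, as in the remark after Proposition 3.1, the constant degenerates, $C_T\to+\infty$, as $\kappa\to0$.
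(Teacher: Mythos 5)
Your proof is correct in substance but takes a genuinely different route from the paper. The paper's proof of this proposition is by reduction to abstract results: it rewrites the generator as $\mathcal{A}\varphi=-\partial_a\varphi-\mathbf{A}\varphi-\mu_1(a)\varphi$ with the size--diffusion operator $\mathbf{A}\psi=\partial_s\psi-\Delta\psi+\mu_2(s)\psi$ on $L^2(\Omega\times(0,S))$, asserts that the pair $(\mathbf{A}^*,\mathbf{B}^*)$, $\mathbf{B}=\mathbf{1}_{(s_1,s_2)}\mathbf{1}_{\omega}$, is final-state observable in every time $T>T_0$, and then invokes Proposition 3.5 of \cite{abst} to transport this observability to the age slice $(0,a_1)$ at the final time, for $T>a_1+T_0$. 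You instead follow the diagonal backward characteristic $(a+\tau,s+\tau,T-\tau)$ issued from the final slice, use $(H_3)$ and $a_1<\hat a$ to suppress the non-local term along the whole relevant segment, and combine the $L^2(\Omega)$-contractivity of the Neumann heat semigroup with Proposition 3.2 on a short window inside $\omega\times(a_1,a_2)\times(s_1,s_2)$. The two arguments rest on the same geometric fact (every such characteristic either crosses the observation box or leaves through $s=S$ inside the fertility-free zone), but yours is self-contained and makes the geometry explicit, while the paper's is modular and delegates both the size observability and the age propagation to external statements.

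One step of your size bookkeeping must be tightened. You claim $q(\cdot,a,s,T)=0$ whenever $s>a+S-\hat a$, on the grounds that the characteristic reaches $s=S$ while the age is still below $\hat a$; but this does not by itself guarantee that it reaches $s=S$ at a \emph{positive} time. You only know $T>a_1+T_0$, and $\hat a-a$ may well exceed this, in which case the characteristic exits through $t=0$ carrying nonzero initial data and the vanishing claim fails on part of that region. The vanishing is genuinely needed only for $\sigma:=s+a_1-a\ge s_2-\kappa$ (where the size window is missed), and there it does hold for the right reason: the exit time is $T-(S-s)=T-a_1-(S-\sigma)+a\ge T-a_1-(S-s_2)-\kappa\ge (T-a_1-T_0)-\kappa>0$, using $T>a_1+T_0\ge a_1+S-s_2$, while the exit age is $a_1+S-\sigma\le a_1+T_0+\kappa<\hat a$ by the hypothesis $T_0<\hat a-a_1$. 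Stating the threshold this way, and choosing $\kappa$ small compared with $T-a_1-T_0$, $\hat a-a_1-T_0$, $a_2-a_1-T_0$ and $s_2-s_1$, closes your case analysis; the remaining cases ($\sigma\le s_1$ and $\sigma\in(s_1,s_2-\kappa)$) go through exactly as you wrote them, with the constant degenerating only as $\kappa\to0$.
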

	\begin{remark}
		The result of the previous Proposition remains true for $T>T_1.$ But does not improve the main result for the strategy used.
	\end{remark}
	Now, let consider the following cascade system:\\
	\begin{equation}
		\left\lbrace
		\begin{array}{ll}
			\dfrac{\partial q}{\partial t}-\dfrac{\partial q}{\partial a}-\dfrac{\partial q}{\partial s}-\Delta q+(\mu_1(a)+\mu_2(s))q=0&\hbox{ in }Q ,\\ 
			\dfrac{\partial q}{\partial\nu}=0&\hbox{ on }\Sigma,\\ 
			q\left( x,A,s,t\right) =0&\hbox{ in } Q_{S,T} \\
			q(x,a,S,t)=0&\hbox{ in } Q_{S,T}\\
			q\left(x,a,s,0\right)=q_0(x,a,s)&\hbox{ in }Q_{A,S}.
		\end{array}\right.\label{322}
	\end{equation} 
	We also need the following result for the Proof of the Theorem 2.1.
	\begin{proposition}
		Let us assume the assumption of Theorem 3.1. Let $T>T_0$ and $a_1<a_0<a_2-T_0.$ There exists $C_T>0$ such that the solution $q$ of the system $(\ref{322})$ verifies the following inequality
		\begin{equation}
			\displaystyle\int\limits_{0}^{S}\int\limits_{a_1}^{a_0}\int_{\Omega}q^2(x,a,s,T)dxdads\leq C_T\displaystyle\int\limits_{0}^{T}\int\limits_{s_1}^{s_2}\int\limits_{a_1}^{a_2}\int_{\omega}q^2(x,a,s,t)dxdadsdt \label{C8}
		\end{equation}
	\end{proposition}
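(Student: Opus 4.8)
The plan is to exploit that the cascade system $(\ref{322})$ carries \emph{no} renewal (birth) term, so it is a pure transport--diffusion equation whose solution is transported along the slope-one characteristics with no feedback through the face $\{a=0\}$. First I would remove the mortality by setting $\tilde q(x,a,s,t)=q(x,a,s,t)\exp\left(-\int_0^a\mu_1(\alpha)\,d\alpha-\int_0^s\mu_2(r)\,dr\right)$, exactly as in the proof of Proposition 3.1; then $\tilde q$ solves $\partial_t\tilde q-\partial_a\tilde q-\partial_s\tilde q-\Delta\tilde q=0$ with homogeneous Neumann condition, and since the exponential weight is bounded above and below by positive constants on $(0,A)\times(0,S)$, it suffices to prove $(\ref{C8})$ with $q$ replaced by $\tilde q$. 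I would also record, from the characteristic representation $(\ref{alp})$ with the $\beta$-term dropped, that $q(\cdot,a,s,T)$ can be nonzero only when the backward characteristic issued from $(a,s,T)$ reaches $\lambda=0$ without meeting the faces $\{a=A\}$ or $\{s=S\}$; in particular the final state is supported in $\{s<S-T\}$ (the statement being trivial when $T\ge S$), and since $T>T_0\ge S-s_2$ this forces $s<s_2$.

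Next, for fixed $a\in(a_1,a_0)$ and admissible $s$, I would introduce $w(x,\lambda)=\tilde q(x,a+T-\lambda,s+T-\lambda,\lambda)$, which (as in the derivation of $(\ref{alp})$) satisfies the pure heat equation $w_\lambda-\Delta w=0$ on $\Omega\times(0,T)$ with $\partial_\nu w=0$ and $w(\cdot,T)=\tilde q(\cdot,a,s,T)$. Writing $r=T-\lambda$, the backward characteristic traces the segment $(a+r,s+r)$ in the age--size plane, and I must show it meets the observation rectangle $(a_1,a_2)\times(s_1,s_2)$ over a nonempty $r$-window. Splitting as in Cases 1--2 of Proposition 3.1: if $s\ge s_1$ the window already opens at $r=0$ (age $a\in(a_1,a_0)\subset(a_1,a_2)$, size $s\in[s_1,s_2)$ since $s<S-T<s_2$), while if $s<s_1$ it opens at $r=s_1-s\le s_1\le T_0$, and here the constraint $a_0<a_2-T_0$ is exactly what guarantees $a+(s_1-s)<a_0+T_0<a_2$, so the age is still below $a_2$ when the size first reaches $s_1$. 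In every case the hypotheses $T>T_0$ and $a_0<a_2-T_0$ yield a window $r\in(\underline r(a,s),\overline r(a,s))$ of positive length on which $(a+r,s+r)\in(a_1,a_2)\times(s_1,s_2)$.

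I would then apply the parabolic observability inequality of Proposition 3.2 to $w$ on a time interval $(t_0,t_1)\subset(0,T)$ corresponding through $\lambda=T-r$ to a subinterval of this window, obtaining
\[\int_\Omega\tilde q^2(x,a,s,T)\,dx\le c_1 e^{c_2/(t_1-t_0)}\int_{t_0}^{t_1}\int_\omega\tilde q^2(x,a+T-\lambda,s+T-\lambda,\lambda)\,dx\,d\lambda.\]
Performing the change of variable $\alpha=a+T-\lambda$ (so that $s+T-\lambda=s+\alpha-a$), which has unit Jacobian in the $(a,s)\mapsto(\alpha,\sigma)$ shift, and then integrating in $s$ over $(0,S-T)$ and in $a$ over $(a_1,a_0)$, Fubini's theorem turns the right-hand side into an integral of $\tilde q^2$ over $\omega\times(a_1,a_2)\times(s_1,s_2)\times(0,T)$, which is the desired observation term; undoing the weight then gives $(\ref{C8})$ with a finite constant $C_T$.

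The main obstacle is the uniform handling of the window and the change of variables, since the admissible interval $(\underline r,\overline r)$ depends on $(a,s)$: as in the proof of Proposition 3.1 this forces splitting the size range at $s=s_1$, choosing a window of fixed length whose position tracks where the characteristic enters the box, and controlling the overlap of the transported integration domains so that the aggregated constant remains finite. One must also check that $C_T\to+\infty$ only in the degenerate limits (window length $\to 0$, $T\downarrow T_0$, or $a_0\uparrow a_2-T_0$), consistently with Remark 3.1.
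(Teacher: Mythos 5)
Your argument is correct, and it reaches the conclusion by a genuinely more self-contained route than the paper. The paper proves Proposition 3.4 essentially by reduction: it rewrites $\mathcal{A}$ as an age-structured operator whose ``state space'' is the size--space system governed by $\mathbf{A}=\partial_s-\Delta+\mu_2$, asserts final-state observability of the sub-pair $(\mathbf{A}^*,\mathbf{B}^*)$ for $T>T_0$, and then invokes Proposition 3.6 of the cited work of Maity, Tucsnak and Zuazua to layer the age transport on top; the geometric content is only displayed in Figure 7(B). You instead unpack the whole mechanism into a single two-dimensional characteristics argument in $(a,s)$: the absence of the renewal term confines the final state to $\{s<S-T\}\subset\{s<s_2\}$ and to characteristics that reach $\lambda=0$, and the two hypotheses enter exactly where they should --- $T>T_0$ guarantees both that $s_2-s\geq T-T_0>0$ and that the entry time $\max\{0,s_1-s\}\leq T_0<T$ leaves a window inside $(0,T)$, while $a_0<a_2-T_0$ guarantees the age coordinate $a+(s_1-s)<a_0+T_0<a_2$ has not left the box when the size first reaches $s_1$. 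This gives a uniform lower bound $\min\{a_2-a_0-T_0,\,s_2-s_1,\,T-T_0\}$ on the window length, hence a finite constant after Proposition 3.2 and the unit-Jacobian change of variables, matching the paper's Remark 3.1 on how $C_T$ degenerates. What the paper's route buys is brevity and reuse of an abstract result; what yours buys is a verifiable, self-contained proof whose time constants are explicit. One small point to tighten: the weight $\exp\bigl(-\int_0^a\mu_1-\int_0^s\mu_2\bigr)$ is \emph{not} bounded below on all of $(0,A)\times(0,S)$ (it vanishes as $a\to A$ or $s\to S$ under $(H_1)$, $(h_1)$); the lower bound is only needed on the compact ranges $a\leq a_0<A$, $s\leq S-T<S$ where the final state is evaluated, and on the observation side only the trivial upper bound $\tilde q^2\leq q^2$ is used, so the argument survives, but you should state it that way.
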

	\begin{proof}{Proposition 3.3. and Proposition 3.4. }\\
		We denote by $$\mathbf{A}\psi=\partial_{s}\psi-\Delta\psi+\mu_2(s)\psi$$ the operator define on \[D(\mathbf{A})=\{\varphi/ \mathbf{A}\varphi\in L^{2}(\Omega\times (0,S)), \quad \varphi(x,0)=0\quad  \dfrac{\partial\varphi}{\partial\nu}|_{\partial\Omega}=0 \}\], $$\mathbf{B}=\mathbf{1}_{(s_1,s_2)}\mathbf{1}_{\omega}\hbox{ and } \mathcal{B}=\mathbf{1}_{(a_1,a_2)}\mathbf{B}.$$\\
		The operator $\mathcal{A}$ can be rewritten by: \[\mathcal{A}\varphi=-\partial_{a}\varphi-\mathbf{A}\varphi-\mu(a)\varphi\quad for \quad \varphi\in D(\mathcal{A}),\]
		and the adjoint $\mathcal{A}^*$ of the operator $\mathcal{A}$  is defined by:
		\[\mathcal{A}^*\varphi=\partial_{a}\varphi-\mathbf{A}^*\varphi-\mu(a)\varphi +\int\limits_{0}^{S}\beta(a,s,\hat{s})\varphi(x,0,\hat{s})d\hat{s}\hbox{ for } \varphi\in D(\mathcal{A^*}).\]
		We can prove that the operator $(\mathbf{A}^*,\mathbf{B}^*)$ is final state observable for every $T>T_0$ (see for instance \cite{yac1}). Finally, the results of the Proposition 3.5. of \cite{abst}, gives the inequality (\ref{C81}) and the result of the Proposition 3.6. of \cite{abst}, gives the inequality (\ref{C8}) (see Figure 7).
	\end{proof}
	\begin{figure}[H]
		\begin{subfigure}{0.5\textwidth}
			\begin{overpic}[scale=0.35]{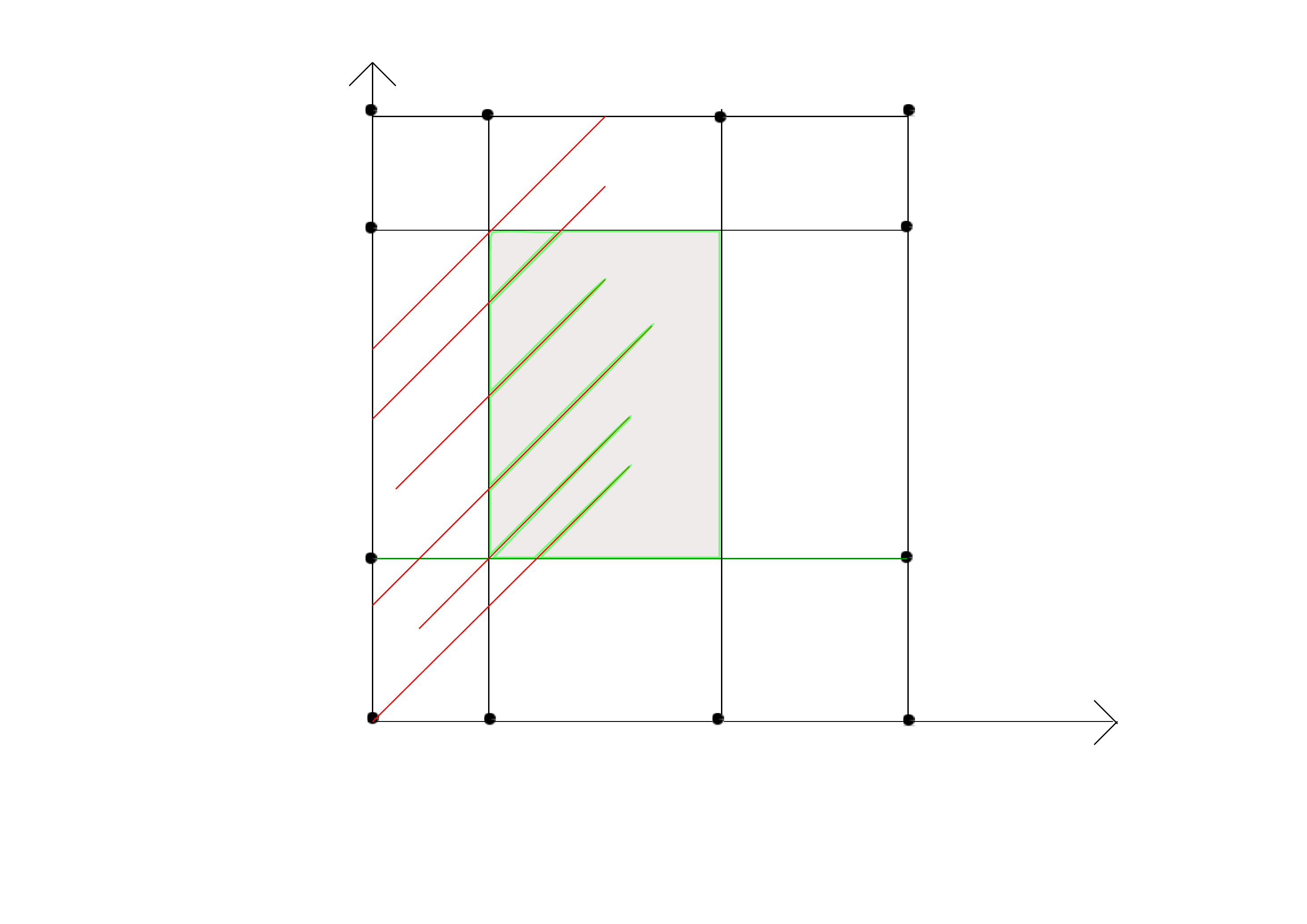}
				\put (25,60.5) {$S$}
				\put (25,52.5) {$s_2$}
				\put (40,34.5) {$Q_1$}	
				\put (25,27) {$s_1$}
				\put (36,12.5) {$a_1$}
				\put (53,12.5) {$a_2$}
				\put (67,11.5) {$A$}
				\put(28.5,24.1){\makebox[0pt]{\Huge\textcolor{black}{.}}}
				\put(32.3,22.5){\makebox[0pt]{\Huge\textcolor{black}{.}}}
				\put(30.2,32.85){\makebox[0pt]{\Huge\textcolor{black}{.}}}
				\put(28.5,43.8){\makebox[0pt]{\Huge\textcolor{black}{.}}}
				\put(28.5,38.3){\makebox[0pt]{\Huge\textcolor{black}{.}}}
			\end{overpic}
			\caption{Estimate of $q(x,a,s,T)$ on $(0,a_1)$\\
				For $T>T_1,$ the backward characteristics starting from $(a,s,T)$ with $a\in (0,a_1)$ enter the observation domain or without all the domain by the boundary $s=S.$ }
		\end{subfigure}\quad
		\begin{subfigure}{0.5\textwidth}
			\begin{overpic}[scale=0.35]{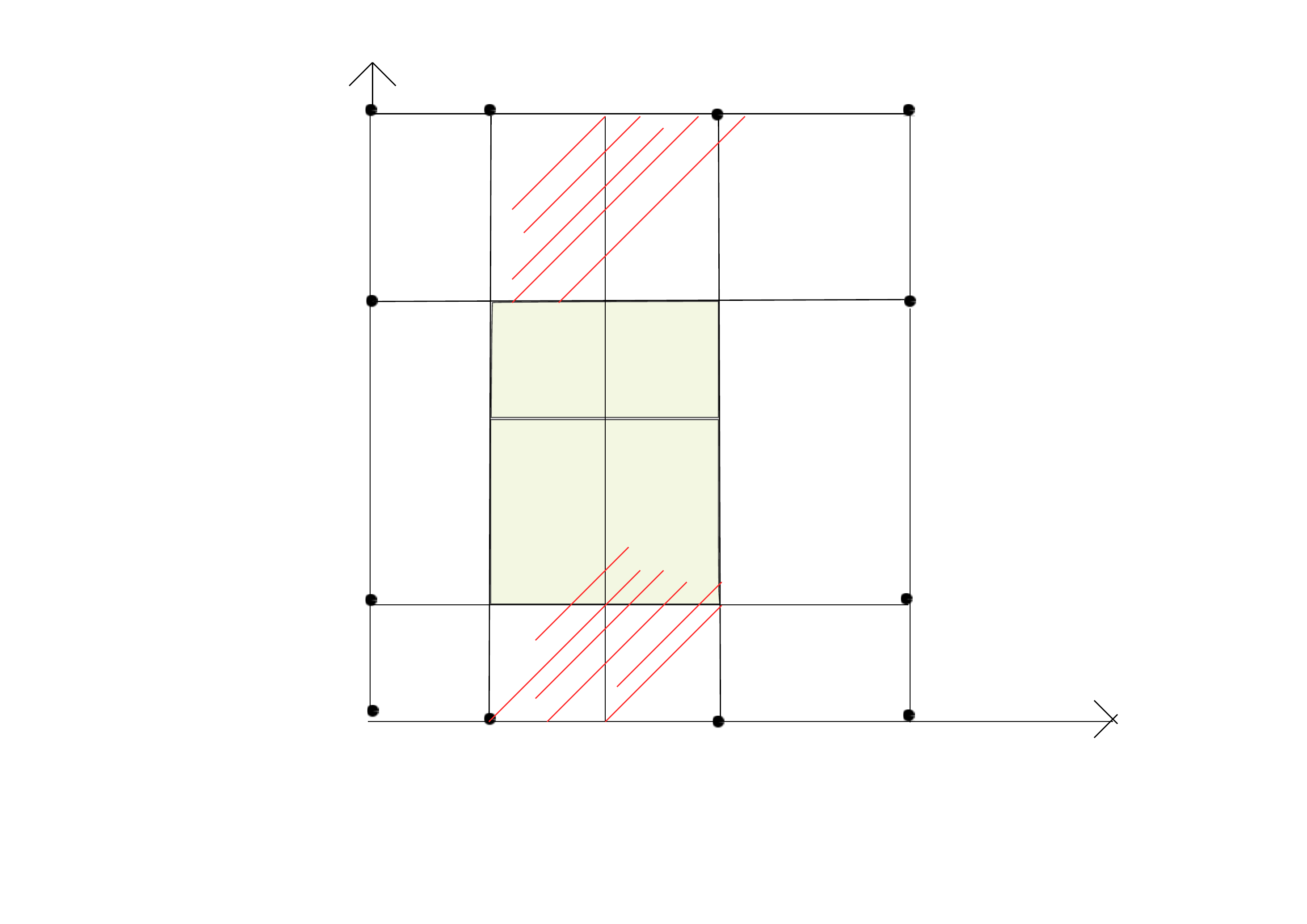}
				\put (25,60.5) {$S$}	
				\put (25,47) {$s_2$}
				\put (25,23) {$s_1$}
				\put (45,13) {$c$}
				\put (42,32.5) {$Q_1$}
				\put (36,12.5) {$a_1$}
				\put (54,12.5) {$a_2$}
				\put (67,11.5) {$A$}
				\put (42,8) {$c=a_2-s_1$}
				\put(46.2,15.2){\makebox[0pt]{\Huge\textcolor{black}{.}}}
				\put(41.2,21.5){\makebox[0pt]{\Huge\textcolor{black}{.}}}
				\put(42,15.2){\makebox[0pt]{\Huge\textcolor{black}{.}}}
				\put(41,16.9){\makebox[0pt]{\Huge\textcolor{black}{.}}}
				\put(39.3,54.3){\makebox[0pt]{\Huge\textcolor{black}{.}}}
				\put(39.9,52.3){\makebox[0pt]{\Huge\textcolor{black}{.}}}
				\put(39.5,49){\makebox[0pt]{\Huge\textcolor{black}{.}}}
				\put(39.6,47.5){\makebox[0pt]{\Huge\textcolor{black}{.}}}
				\put(43,47.5){\makebox[0pt]{\Huge\textcolor{black}{.}}}
			\end{overpic}
			\caption{Estimate of $q(x,a,s,T)$ on $(a_1,a_0)$\\For $T>T_0,$ the backward characteristics starting from $(a,s,T)$ with $a\in (a_1,a_2-T_0)$ enter the observation domain in the case where $s\in(0,s_2)$ or without all the domain by the boundary  $s=S$ if $s\in (s_2,S)$ }
		\end{subfigure}
		\caption{Illustration of the time to estimate $q(x,a,s,T)$ on $(0,a_0)$ }
	\end{figure}
	Let us now gives a preliminary results for the proof of the Theorem 3.1.\\
	Let $q=u_1+u_2$ where $u_1$ and $u_2$ verify
	\begin{equation}
		\left\lbrace
		\begin{array}{ll}
			\dfrac{\partial u_1}{\partial t}-\dfrac{\partial u_1}{\partial a}-\dfrac{\partial u_1}{\partial s}-\Delta u_1+(\mu_1(a)+\mu_2(s))u_1=0&\hbox{ in }Q_{A.S}\times (\eta,T) ,\\ 
			\dfrac{\partial u_1}{\partial\nu}=0&\hbox{ on }\partial\Omega\times (0,A)\times(0,S)\times (\eta,T),,\\ 
			u_1\left( x,A,s,t\right) =0&\hbox{ in } \Omega\times(0,S)\times (\eta,T) \\
			u_1(x,a,S,t)=0&\hbox{ in } Q_{S,T}\\
			u_1\left(x,a,s,\eta\right)=q_{\eta}&\hbox{ in }Q_{A,S}.
		\end{array}\right.\label{3220}
	\end{equation} 
	where $q_{\eta}=q(x,a,s,\eta)$ in $Q_{A.S}$ and
	\begin{equation}
		\left\lbrace
		\begin{array}{ll}
			\dfrac{\partial u_2}{\partial t}-\dfrac{\partial u_2}{\partial a}-\dfrac{\partial u_2}{\partial s}-\Delta u_2+(\mu_1(a)+\mu_2(s))u_2=\int\limits_{0}^{S}\beta(a,s,\hat{s})q(x,0,\hat{s},t)d\hat{s}&\hbox{ in }Q_{A.S}\times (\eta,T) ,\\ 
			\dfrac{\partial u_2}{\partial\nu}=0&\hbox{ on }\partial\Omega\times (0,A)\times(0,S)\times (\eta,T),\\ 
			u_2\left( x,A,s,t\right) =0&\hbox{ in } \Omega\times(0,S)\times (\eta,T) \\
			u_2(x,a,S,t)=0&\hbox{ in } Q_{S,T}\\
			u_2\left(x,a,s,\eta\right)=0&\hbox{ in }Q_{A,S}.
		\end{array}\right.\label{3221}
	\end{equation}
	where $V(x,a,s,t)=\int\limits_{0}^{S}\beta(a,s,\hat{s})q(x,0,\hat{s},t)d\hat{s}.$\\ 
	Using Duhamel’s formula we can write \[u_2(x,a,s,t)=\int\limits_{\eta}^{t}\mathbb{T}_{t-f}V(.,.,.,f)df\] where $\mathbb{T}$ is the semigroup generates by the operator $\mathcal{A}^{*}.$
	Moreover, the solution $u_2$ of the system $(\ref{3221})$ verifies the following estimates :
	\begin{proposition}
		Under the assumptions $(H_1)$, $(H_2)$ and $(h_1)$, there exist $C=e^{\frac{3}{2}T}\|\beta\|^{2}_{\infty}A$ such that the solution $u_2$ of the system $(\ref{3221})$ verifies the following estimate
		\begin{equation}
			\int\limits_{\eta}^{T}\int\limits_{s_1}^{s_2}\int\limits_{a_1}^{a_2}\int_{\omega}u_{2}^2(x,a,s,t)dxdadsdt\leq \int\limits_{\eta}^{T}\int\limits_{0}^{S}\int\limits_{0}^{A}\int_{\Omega}u_{2}^2(x,a,s,t)dxdadsdt\leq C \int\limits_{\eta}^{T}\int\limits_{0}^{S}\int_{\Omega}q^2(x,0,s,t)dxdsdt.	
		\end{equation}
	\end{proposition}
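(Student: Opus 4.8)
The plan is to build everything on the Duhamel representation
\[
u_2(\cdot,t)=\int_\eta^t \mathbb{T}_{t-f}\,V(\cdot,f)\,df
\]
recalled just above the statement, to bound the homogeneous semigroup $\mathbb{T}$ in $K=L^2(\Omega\times(0,A)\times(0,S))$, and to estimate the source $V$ by Cauchy--Schwarz so that its $L^2$ norm is controlled by the trace $q(\cdot,0,\cdot,\cdot)$. The first inequality in the statement requires no argument: the integrand $u_2^2$ is nonnegative and $\omega\times(a_1,a_2)\times(s_1,s_2)\subset\Omega\times(0,A)\times(0,S)$, so enlarging the space--age--size domain of integration can only increase the integral. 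Thus all the content lies in the second inequality, which transfers the full $L^2$ mass of $u_2$ onto the boundary trace hidden inside $V$.

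First I would record that the semigroup $\mathbb{T}$ attached to the homogeneous version of $(\ref{3221})$ is a contraction on $K$. Multiplying the homogeneous equation by the solution and integrating over $\Omega\times(0,A)\times(0,S)$, the two transport terms $-\partial_a,-\partial_s$ integrate by parts into the nonnegative boundary contributions $\tfrac12\,u^2|_{a=0}$ and $\tfrac12\,u^2|_{s=0}$, since the terminal traces at $a=A$ and $s=S$ vanish by the boundary conditions; the Laplacian yields $\int_\Omega|\nabla u|^2\ge 0$ through the Neumann condition; and the mortality term contributes $\int(\mu_1(a)+\mu_2(s))u^2\ge 0$ because $\mu_1,\mu_2\ge 0$. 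Hence $\tfrac{d}{dt}\|u\|_K^2\le 0$ and $\|\mathbb{T}_\tau\|_{\mathcal L(K)}\le 1$. I would stress that the possible singularity of $\mu_1,\mu_2$ near $a=A$ and $s=S$ is harmless here, precisely because these terms enter with a favourable sign.

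Next I would estimate the source. By Cauchy--Schwarz in the $\hat s$ variable,
\[
\|V(\cdot,f)\|_K^2=\int_\Omega\int_0^A\int_0^S\left(\int_0^S\beta(a,s,\hat s)\,q(x,0,\hat s,f)\,d\hat s\right)^2 ds\,da\,dx
\le \|\beta\|_\infty^2\,A\,c_S\int_\Omega\int_0^S q^2(x,0,\hat s,f)\,d\hat s\,dx,
\]
with the factor $A$ coming from the $a$-integration and $c_S$ absorbing the remaining $s,\hat s$ integrations. The cleanest route to the announced exponential constant is then to run the very same computation as an energy estimate directly on $(\ref{3221})$: keeping the sign-definite terms on the left and bounding $\int V u_2$ by Young's inequality gives $\tfrac{d}{dt}\|u_2(\cdot,t)\|_K^2\le \|u_2(\cdot,t)\|_K^2+\|V(\cdot,t)\|_K^2$, so Gronwall's lemma (with $u_2(\cdot,\eta)=0$) and an integration over $t\in(\eta,T)$ yield
\[
\int_\eta^T\|u_2(\cdot,t)\|_K^2\,dt\le e^{\frac32 T}\,\|\beta\|_\infty^2\,A\int_\eta^T\int_0^S\int_\Omega q^2(x,0,s,t)\,dx\,ds\,dt,
\]
after tuning the Young and Gronwall constants, which is exactly the desired estimate.

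I do not expect any genuine obstacle: this is a standard a priori bound and the two inequalities split cleanly into the trivial one and the energy one. The only points that demand care are the sign bookkeeping that makes $\mathbb{T}$ a contraction in spite of the unbounded mortality rates and the transport boundary conditions, and the correct application of Cauchy--Schwarz to the nonlocal renewal integral so that the right-hand side reduces precisely to the boundary trace $q(x,0,s,t)$ while the constant carries the factors $\|\beta\|_\infty^2$ and $A$. Matching the exact constant $e^{\frac32 T}\|\beta\|_\infty^2 A$ is then merely a matter of choosing the Young parameter and invoking Gronwall, and is not where the mathematical substance resides.
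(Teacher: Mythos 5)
Your proposal is correct and follows essentially the same route as the paper: a direct energy estimate on $(\ref{3221})$, with the transport, Laplacian and mortality terms discarded as sign-definite after integration by parts, Cauchy--Schwarz on the nonlocal source to produce the factor $\|\beta\|_\infty^2 A$ times the trace $q(x,0,\cdot,\cdot)$, and Young plus an exponential-in-time absorption (the paper implements your Gronwall step via the substitution $u_2=\hat u_2 e^{\lambda t}$ with $\lambda=\tfrac32$). The only differences are cosmetic; your honest bookkeeping of the residual $s,\hat s$ factors in the constant is, if anything, more careful than the paper's.
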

	\begin{proof}{Proof of Proposition 3.5.}
		We denote by $$u_2=\hat{u}_{2}e^{\lambda t}\hbox{ with }\lambda>0.$$ The function $u_2$ verifies \begin{equation}\dfrac{\partial \hat{u}_2}{\partial t}-\dfrac{\partial \hat{u}_2}{\partial a}-\dfrac{\partial \hat{u}_2}{\partial s}-\Delta \hat{u}_2+(\mu_1(a)+\mu_2(s)+\lambda)\hat{u}_2=e^{-\lambda t}\int\limits_{0}^{S}\beta(a,s,\hat{s})q(x,0,\hat{s},t)d\hat{s}\hbox{ in }Q_{A.S}\times (\eta,T). \label{humm}
		\end{equation}
		Multiplying $(\ref{humm})$ by $\hat{u}_2$ and integrating over $Q_{A,S}\times (\eta,T)$, we get 
		\[\int\limits_{0}^{S}\int\limits_{0}^{A}\int_{\Omega}\hat{u}_{2}^2(x,a,s,T)dxdads+\int\limits_{0}^{T}\int\limits_{0}^{S}\int_{\Omega}\hat{u}_{2}^2(x,0,s,t)dxdsdt+\int\limits_{0}^{T}\int\limits_{0}^{A}\int_{\Omega}\hat{u}_{2}^2(x,a,0,t)dxdadt\]\[+\int\limits_{\eta}^{T}\int_{Q_{A,S}}|\nabla\hat{u}_{2}|^2dxdadsdt+\int\limits_{\eta}^{T}\int_{Q_{A,S}}(\mu_1(a)+\mu_2(s)+\lambda)\hat{u}_{2}^2dxdadsdt\]\[=\int\limits_{\eta}^{T}\int_{Q_{A,S}}\left(e^{-\lambda t}\int\limits_{0}^{S}\beta(a,s,\hat{s})q(x,0,\hat{s},t)d\hat{s}\right)\hat{u}_2dxdadsdt.\]
		Using Young inequality and choosing $\lambda=\frac{3}{2},$
		we obtain
		\[ \int\limits_{\eta}^{T}\int_{Q_{A,S}}\hat{u}_{2}^2dxdadsdt\leq \|\beta\|^{2}_{\infty}A \int\limits_{\eta}^{T}\int\limits_{0}^{S}\int_{\Omega} q^2(x,0,s,t)dxdsdt.\]
		Finally, we get 
		\[ \int\limits_{\eta}^{T}\int_{Q_{A,S}}u_{2}^2dxdadsdt\leq e^{\frac{3}{2}T}\|\beta\|^{2}_{\infty}A \int\limits_{\eta}^{T}\int\limits_{0}^{S}\int_{\Omega} q^2(x,0,s,t)dxdsdt.\]
	\end{proof}
	\begin{proof}{Proof of the Theorem 3.1}
		We split the term to be estimated as the following
		\[\int\limits_{0}^{S}\int\limits_{0}^{A}\int_{\Omega}q^2(x,a,s,T)dxdads=\int\limits_{0}^{S}\int\limits_{0}^{a_1}\int_{\Omega}q^2(x,a,s,T)dxdads+\int\limits_{0}^{S}\int\limits_{a_1}^{A}\int_{\Omega}q^2(x,a,s,T)dxdads.\]
		Using the Proposition 3.3. we obtain the estimate 
		\begin{equation}\int\limits_{0}^{S}\int\limits_{0}^{a_1}\int_{\Omega}q^2(x,a,s,T)dxdads\leq C\int\limits_{0}^{A}\int\limits_{0}^{S}\int\limits_{0}^{A}\int_{\Omega}q^2(x,a,s,t)dxdadsdt.\label{coupp}\end{equation}
		We are now left with the estimate of \[\int\limits_{0}^{S}\int\limits_{a_1}^{A}\int_{\Omega}q^2(x,a,s,T)dxdads.\]
		But since $q=u_1+u_1$ we must therefore estimate 
		\[\int\limits_{0}^{S}\int\limits_{a_1}^{A}\int_{\Omega}u_{1}^{2}(x,a,s,T)dxdads+\int\limits_{0}^{S}\int\limits_{a_1}^{A}\int_{\Omega}u_{2}^{2}(x,a,s,T)dxdads.\]
		We have \[\int\limits_{0}^{S}\int\limits_{a_1}^{A}\int_{\Omega}u_{2}^{2}(x,a,s,T)dxdads\leq C(A-a_2)\int\limits_{\eta}^{T}\int\limits_{0}^{S}\int_{\Omega}q^2(x,0,s,t)dxdsdt.\]
		And then, using the Proposition 3.1., we obtain
		\begin{equation}
			\int\limits_{0}^{S}\int\limits_{a_1}^{A}\int_{\Omega}u_{2}^{2}(x,a,s,T)dxdads\leq C_1\int\limits_{0}^{T}\int\limits_{s_1}^{s_2}\int\limits_{a_1}^{a_2}\int_{\omega}q^2(x,a,s,t)dxdadsdt.\label{debay}
		\end{equation}
		See Figure for the Illustration of the Observability inequality.
		%%%%%%%%%%
		%We have 
		%We consider the section $S(T)$ define by $$S(T)=\{(a,s,t)\in Q'\hbox{ such that } t=T\}.$$ 
		%	The section can be split into two sets given by:\\
		%	The first set
		%	 \[S_1(T)=[0,a_0]\times [0,S]\times\{T\},\] the second set  \[S_2(T)=[a_0,A]\times[0,S]\times\{T\}.\]	 \\	
		%	 	We have the following result:
		%	\begin{lemma}
		%		Let us suppose that $T>A-a_2+T_0+T_1.$ Under the assumption of the Proposition 3.1 and Proposition 3.3, there exists $a_0\in (a_1,
		%	a_2-T_0)$ such that the sub section $S_2(T)=[a_0,A]\times[0,S]\times\{T\}$ of the section $S(T)$ does not intersect the domain $A_1.$ Moreover, $\max\{T+a-A,T+s-S\}>T_1$ in $S_2(T).$\\
		%		\end{lemma}
		%		\begin{proof}
		%		without loss  the  generality, we consider $\hat{a}=a_2.$\\
		%		Suppose that $T>A-a_2+T_0+T_1$.\\
		%		We have $T>A-a_2+T_0+T_1$ then there exist $\alpha>0$ such that  $T-A>-a_2+T_0+T_1+\alpha\Longleftrightarrow T+(a_2-T_0-\alpha)-A>T_1.$\\
		%we choose $\delta<\alpha$ and $\delta<T_0$ very small such that $a_0=a_2-\delta-T_0>a_1$ (it's possible because $a_2-a_1>T_0$), we obtain $T+a_0-A>T_1.$\\ In $S_2,$ we have $T+a_0-A\leq T+a-A\leq \max\{T+a-A,T+s-S\}. $ Then $\max\{T+a-A,T+s-S\}>T_1$ in $S_2(T).$\\ Moreover, we have $a_0=a_2-T_0-\delta\geq a_2-T_0-T_1>A-T$ which reassures that the section $S_2(T)$ does not intersect the domain $A_1$ (see Figure 7).\\
		%\end{proof}
		\begin{figure}[H]
			\begin{overpic}[scale=0.65]{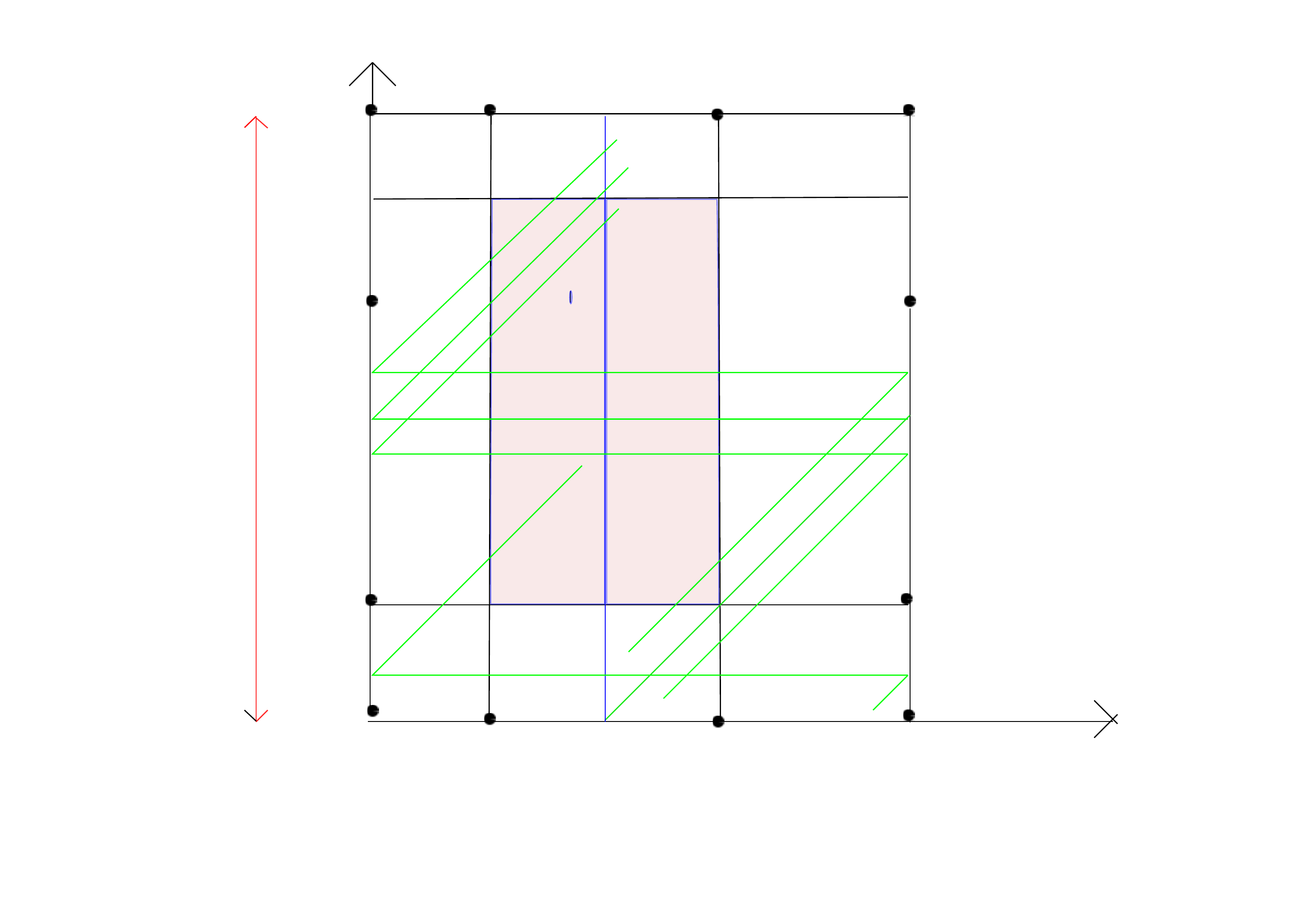}
				\put (26,61) {$S$}	
				\put (69.5,13) {$A$}
				\put (26.5,13.5) {$0$}
				\put (54.5,13) {$a_2$}
				\put (26,24.5) {$s_1$}
				\put (22,47) {$s_2-a_1$}
				\put (26,55) {$s_2$}
				%\put (25.5,35) {$c_1$}
				%\put (25.5,41) {$c_2$}				
				\put (36.5,13) {$a_1$}
				\put (42,35.6) {$Q_{1}$}	
				\put(0,35){$\int\limits_{0}^{S}\beta(a,s,l)q(x,0,l,t)dl$}
				\put(50.7,17){\makebox[0pt]{\Huge\textcolor{black}{.}}}
				\put(66.8,16.1){\makebox[0pt]{\Huge\textcolor{black}{.}}}
				\put(48.1,20.7){\makebox[0pt]{\Huge\textcolor{black}{.}}}
				\put(46.3,15.4){\makebox[0pt]{\Huge\textcolor{black}{.}}}	
			\end{overpic}
			\caption{ The backward characteristics starting from $(a,s,T)$ with
				$a\in (a_0,A)$ (red lines or green lines) hits the boundary $(a=A)$, gets renewed
				by the renewal condition $\int\limits_{0}^{S}\beta(a,\hat{s},s)q(x,0,s,t)ds$ and then 
				enters the observation domain (green lines) or without by the boundary $s=S$.
				So, with the conditions $T>A-a_2+T_1+T_0$ all the characteristics starting at $(a,s,T)$ with  $a\in (a_0,A)$ get renewed by the renewal condition $\int\limits_{0}^{S}\beta(a,s,s')q(x,0,s',t)ds'$ with $t>T_1$ and enter the observation domain or without all the domain by the boundary $s=S$ with $a<\hat{a}.$ }
		\end{figure}
		%\begin{figure}[H]
		%			\begin{overpic}[scale=0.45]{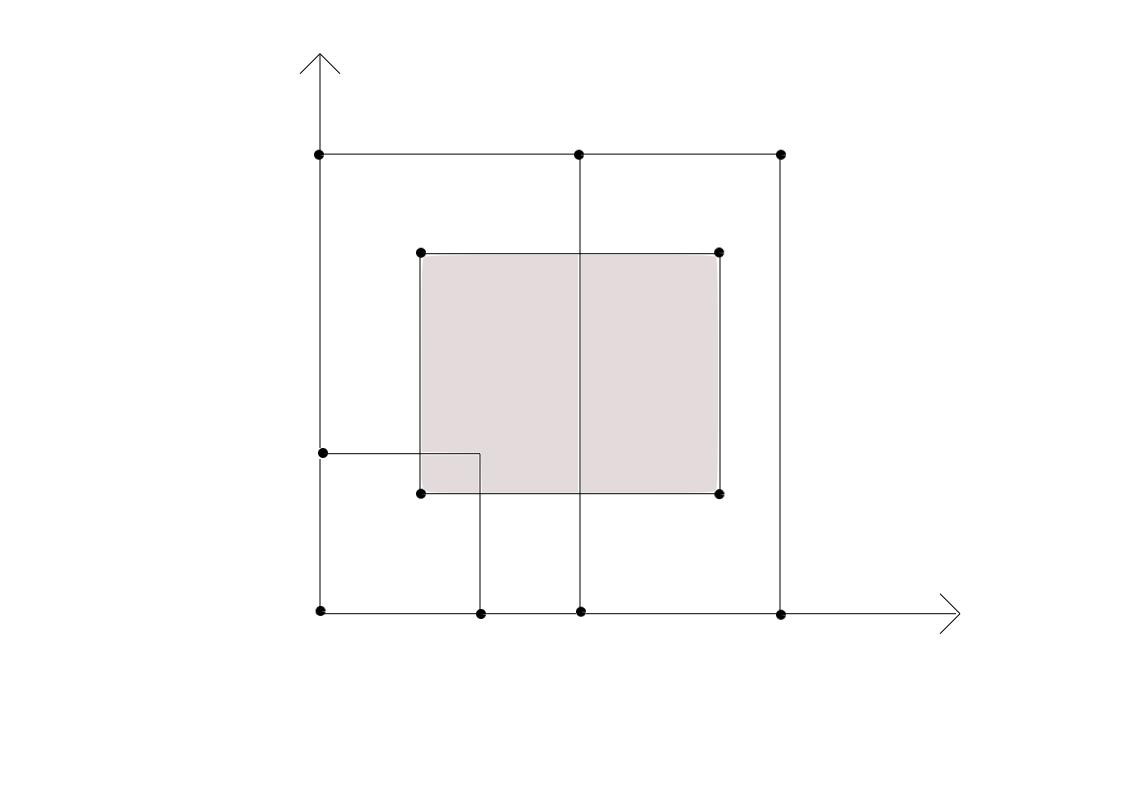}
		%		\put (25,55) {$S$}
		%				\put (35,50) {$S_1(T)$}
		%			\put (60,50) {$S_2(T)$}
		%			\put (69,12) {$A$}
		%		\put (52,14) {$a_0$}
		%		\put (21,30) {$S-T$}
		%		  \put (39,34) {$Observation$}
		%		  \put (32,21) {$A_1$}               
		%		   \put (40,12) {$A-T$}     
		%          \end{overpic}
		%\caption{This figure illustrates the result of the Lemma 3.1. Indeed, this figure corresponds to the section $S(T),$ and for $T>A-a_2+T_0+T_1$, we have $A_1\cap S_1(T)=\emptyset $}
		%		 \caption{An illustration of the time of observability.}
		%\end{figure}
		%We have two situations related to the value of $T_1.$\\
		%First case: $T_1=s_1$\\
		%	As $T>A-a_2+S-s_2+a_1+s_1$, there exists $\delta>0$ such that $$T>A-a_2+S-s_2+a_1+s_1+2\delta;$$ therefore $$T-(s_1+\delta)>A-(a_2-a_1-S+s_2-\delta).$$
		%	Moreover for all $$a\in (0,A)\hbox{ if }T-(s_1+\delta)>A-a\hbox{ we have }u_1(x,a,s,T)=0\hbox{ a.e. }$$ Let $\gamma=a_1+S-s_2+\delta.$ Then if $a\in (a_2-\gamma,A)$ we have 
		%	\[T-(s_1+\delta)>A-(a_2-\gamma)>A-a.\]
		%Then \[u_1(x,a,s,T)=0\hbox{ a.e. }x\in \Omega\hbox{, }a\in(a_2-\gamma,A).\]
		%	(Notice that $a_2-\gamma>a_1$).
		%		Then
		%\[\int\limits_{0}^{S}\int\limits_{a_1}^{A}\int_{\Omega}u_{1}^2(x,a,s,T)dxdads=\int\limits_{0}^{S}\int\limits_{a_1}^{\gamma}\int_{\Omega}u_{1}^{2}(x,a,s,T)dxdads.\]
		%Second case: $T_1=a_1+S-s_2$\\
		As, $T>A-a_2+T_1+T_0$, there exists $\delta>0$ such that $$T>A-a_2+T_1+T_0+2\delta;$$ therefore $$T-(T_1+\delta)>A-(a_2-T_0+\delta).$$
		Moreover for 
		\[a\in (a_2-T_0-\delta,A)\hbox{ we have }T-(T_1+\delta)>A-(a_2-T_0-\delta)>A-a.\]
		Then \[u_1(x,a,s,T)=0\hbox{ a.e. }x\in \Omega\hbox{, }a\in(a_2-(T_0-\delta),A).\]
		Therefore 
		\[\int\limits_{0}^{S}\int\limits_{a_1}^{A}\int_{\Omega}u_{1}^2(x,a,s,T)dxdads=\int\limits_{0}^{S}\int\limits_{a_1}^{a_2-T_0+\delta}\int_{\Omega}u_{1}^{2}(x,a,s,T)dxdads.\]
		As $$T-(T_1+\delta)>A-(a_2-(T_0+\delta)),$$ then using the Proposition 3.4., we obtain
		
		\begin{equation}
			\int\limits_{0}^{S}\int\limits_{a_1}^{A}\int_{\Omega}u_{1}^{2}(x,a,s,T)dxdads\leq K\int\limits_{\eta}^{T}\int\limits_{s_1}^{s_2}\int\limits_{a_1}^{a_2}\int_{\omega}u_{1}^{2}(x,a,s,t)dxdadsdt.\label{deba}
		\end{equation}
		As $$q=u_1+u_2\Longleftrightarrow u_1=q-u_2$$ then 
		\begin{equation}
			\int\limits_{0}^{S}\int\limits_{a_1}^{A}\int_{\Omega}u_{1}^{2}(x,a,s,T)dxdads\leq 2\left(\int\limits_{\eta}^{T}\int\limits_{s_1}^{s_2}\int\limits_{a_1}^{a_2}\int_{\omega}q^{2}(x,a,s,t)dxdadsdt+\int\limits_{\eta}^{T}\int\limits_{s_1}^{s_2}\int\limits_{a_1}^{a_2}\int_{\omega}u_{2}^{2}(x,a,s,t)dxdadsdt\right)..\label{deba1}
		\end{equation}
		From the Proposition 3.5.; we get 
		\begin{equation}
			\int\limits_{0}^{S}\int\limits_{a_1}^{A}\int_{\Omega}u_{1}^{2}(x,a,s,T)dxdads\leq K\int\limits_{0}^{T}\int\limits_{s_1}^{s_2}\int\limits_{a_1}^{a_2}\int_{\omega}q^{2}(x,a,s,t)dxdadsdt..\label{deba11}
		\end{equation}
		Combining the inequalities $(\ref{debay})$ and $(\ref{deba11})$, we obtain
		\begin{equation}
			\int\limits_{0}^{S}\int\limits_{a_1}^{A}\int_{\Omega}q^{2}(x,a,s,T)dxdads\leq K_1 \int\limits_{0}^{T}\int\limits_{s_1}^{s_2}\int\limits_{a_1}^{a_2}\int_{\Omega}q^{2}(x,a,s,t)dxdadsdt.\label{coup1}
		\end{equation}
		Finally, $(\ref{coupp})$ and $(\ref{coup1})$ give
		\begin{equation}
			\int\limits_{0}^{S}\int\limits_{0}^{A}\int_{\Omega}q^{2}(x,a,s,T)dxdads\leq K_T \int\limits_{0}^{T}\int\limits_{s_1}^{s_2}\int\limits_{a_1}^{a_2}\int_{\Omega}q^{2}(x,a,s,t)dxdadsdt.\label{coup}
		\end{equation}
	\end{proof}
	%From the Proposition 3.2 we have 
	%		\begin{equation}
	%	\int\limits_{0}^{S}\int\limits_{0}^{a_0}\int_{\Omega}q^2(x,a,s,T)dxdads\leq C_{1T}\int\limits_{0}^{S}\int\limits_{a_1}^{A}\int_{\Omega}u_{1}^{2}(x,a,s,T)dxdads+\int\limits_{0}^{S}\int\limits_{a_1}^{A}\int_{\Omega}u_{2}^{2}(x,a,s,T)dxdads.\label{inee}
	%\end{equation}	
	%In  the domain $S_2,$ we have:
	%		\[q(x,a,s,T)=\int\limits_{\max\{T-A+a,  T-S+s\}}^{T}\left(e^{(T-l)L}\int\limits_{0}^{S}\beta(a+T-l,s+T-l,\hat{s})q(x,0,\hat{s},l)d\hat{s})\right)dl\]
	%That imply
	%\[\int\limits_{0}^{S}\int\limits_{a_0}^{A}\int_{\Omega}q^2(x,a,s,T)dxdads\leq K\int\limits_{0}^{S}\int\limits_{a_0}^{A}\int\limits_{\max\{T-A+a,  T-S+s\}}^{T}\int\limits_{0}^{S}\int_{\Omega}q^2(x,0,\hat{s},t)dxd\hat{s}dtdads.\]
	%The Lemma 3.1 above reassures us that $\max\{T-A+a,  T-S+s\}>T_1.$
	%Then
	%using the Proposition 3.1, we obtain:
	%\begin{equation}
	%\int\limits_{0}^{S}\int\limits_{a_0}^{A}\int_{\Omega}q^2(x,a,s,T)dxdads\leq K_T\int\limits_{0}^{T}\int\limits_{s_1}^{s_2}\int\limits_{a_1}^{a_2}\int_{\omega}q^2(x,a,s,t)dxdadsdt.\label{theo2}
	%\end{equation}
	%Combining the inequality $(\ref{inee})$ and $(\ref{theo2})$  we obtain.
	%\begin{equation}
	%\int\limits_{0}^{S}\int\limits_{0}^{A}\int_{\Omega}q^2(x,a,s,T)dxdads\leq K_T\int\limits_{0}^{T}\int\limits_{s_1}^{s_2}\int\limits_{a_1}^{a_2}\int_{\omega}q^2(x,a,s,t)dxdadsdt.\label{theo4}
	%\end{equation}	
	In this part we are interested in a different control domain. Indeed, the idea is to be able to act on the same individuals until there $a_2$ age. To do this we need an additional requirement on the kernel $\beta.$ Indeed, we assume more than	 $\beta(a,\hat{s},s)=0\hbox{ for all } s\in (s_e,S),$ this means that the size of the new borns is always lower than $s_e.$ 
	\subsection{Proof of the Theorem 1.2}
	We still consider in this part the same problem of null controllability of the system $\ref{2},$ but by acting now on the system through the control domain \[Q_{2}=\{(x,a,s)\hbox{ such that } (x,a,s)\in \omega\times(a_1,a_2)\times(0,S)\hbox{ and } a-a_0<s<a+s_e\}.\]
	We suppose that the assumption $(H3)$ and $(H_4)$ holds.
	Under the assumption  $(H_4)$ the adjoint system of $(\ref{2}) $ becomes:
	\begin{equation}
		\left\lbrace\begin{array}{ll}
			\dfrac{\partial q}{\partial t}-\dfrac{\partial q}{\partial a}-\dfrac{\partial q}{\partial s}-\Delta q+(\mu_1(a)+\mu_2(s))q=\int\limits_{0}^{s_e}\beta(a,s,\hat{s})q(x,0,\hat{s},t)dad\hat{s}&\hbox{ in }Q ,\\ 
			\dfrac{\partial q}{\partial\nu}=0&\hbox{ on }\Sigma,\\ 
			q\left( x,A,s,t\right)=0&\hbox{ in } Q_{S,T} ,\\
			q\left(x,a,s,0\right)=q_0(x,a,s)&
			\hbox{ in }Q_{A,S};\\
			q(x,a,S,t)=0& \hbox{ in } Q_{S,T}.
		\end{array}
		\right.
		\label{45}
	\end{equation}
	Consequently, the estimation of the renewal term with respect to the variable $s$ will be done on $(0,s_e).$\\
	In view of \cite[\quad Theorem\quad 11.2.1]{b8} ,
	the result of the Theorem 1.2 is then reduced to the following Theorem.
	\begin{theorem}
		Under the assumption of the theorem 1.2, the pair $(\mathcal{A}^*,\mathcal{B}^*)$ is final-state observable for every $T>\max\{S-s_e,a_1+A-a_0\}$. In other words, for every $T>\max\{S-s_e,a_1+A-a_0\}$ , there exist $K_T>0$ such that the solution $q$ of (\ref{45}) satisfies
		\begin{equation}
			\int\limits_{0}^{A}\int\limits_{0}^{S}\int_{\Omega}q^2(x,a,s,T)dxdsda\leq K_T\int\limits_{0}^{T}\int_{Q_{2}}q^2(x,a,s,t)dxdadsdt \label{A}
		\end{equation}
	\end{theorem}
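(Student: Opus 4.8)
The plan is to run the same three-part machinery as in the proof of Theorem 3.1 (estimate of the renewal term, splitting $q=u_1+u_2$, characteristic transport of the final state), but now exploiting the fact that the control set $Q_2$ is aligned with the characteristics of (45). Along any characteristic of the transport part $\partial_t-\partial_a-\partial_s$ the quantity $s-a$ is conserved, and $Q_2=\{x\in\omega,\ a\in(a_1,a_2),\ s-a\in(-a_0,s_e)\}$; hence a characteristic stays inside the $Q_2$-strip for its whole life as soon as its conserved value $s-a$ lies in $(-a_0,s_e)$. This geometric feature replaces the rectangular bookkeeping of the $Q_1$ case. As in the proof of Proposition 3.1 I would first remove the mortality by the change of unknown $\tilde q=q\exp(-\int_0^a\mu_1-\int_0^s\mu_2)$, and recall that by $(H_3)$ the renewal source $\int_0^{s_e}\beta(a,s,\hat s)q(x,0,\hat s,t)\,d\hat s$ vanishes for $a<\hat a$ (recall $a_0\leq\hat a$), so on the lower age range the equation is purely parabolic along characteristics.

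\textbf{Step 1 (renewal term).} First I would prove the analogue of Proposition 3.1: there exist $\eta<T$ and $C>0$ with
\[
\int_\eta^T\int_0^{s_e}\int_\Omega q^2(x,0,\hat s,t)\,dx\,d\hat s\,dt\leq C\int_0^T\int_{Q_2}q^2\,dx\,da\,ds\,dt .
\]
By $(H_4)$ only newborn sizes $\hat s\in(0,s_e)$ enter the renewal integral, so controlling $q$ at age $0$ for these sizes suffices. For fixed $\hat s\in(0,s_e)$ the backward characteristic issued from $(0,\hat s,t)$ has $s-a=\hat s\in(0,s_e)\subset(-a_0,s_e)$, so it remains inside the $Q_2$-strip and enters the observation set as its age crosses $(a_1,a_2)$. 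Setting $w(x,\lambda)=\tilde q(x,t-\lambda,\hat s+t-\lambda,\lambda)$ turns $q(x,0,\hat s,t)$ into the heat semigroup evaluated along this characteristic, and Proposition 3.2 (applied with $t_0=t-a_1-\delta$, $t_1=t-a_1$) yields the pointwise bound; integrating over $\hat s\in(0,s_e)$ and $t\in(\eta,T)$ gives the claim. The threshold $T>S-s_e$ is what guarantees that these characteristics reach the observation window while still inside $\{s<S\}$.

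\textbf{Steps 2--4 (splitting and combination).} Next I would write $q=u_1+u_2$, with $u_1$ solving the homogeneous version of (45) with Cauchy datum $q(\cdot,\cdot,\cdot,\eta)$ and $u_2$ the Duhamel solution $u_2(t)=\int_\eta^t\mathbb{T}_{t-f}V(\cdot,f)\,df$ driven by the renewal source. The energy estimate of Proposition 3.5 (its size integral now restricted to $(0,s_e)$ by $(H_4)$) bounds $\int_\eta^T\int_{Q_{A,S}}u_2^2$ by $\int_\eta^T\int_0^{s_e}\int_\Omega q^2(x,0,\hat s,t)$, hence by the observation via Step 1. For $u_1$, which carries no renewal, each backward characteristic from $(a,s,T)$ either exits through $\{a=A\}$ or $\{s=S\}$ — where $u_1=0$ — or reaches the strip; the threshold $T>a_1+A-a_0$ is exactly what forces every point with nonzero contribution to send its backward characteristic into an age in $(a_1,a_2)$ inside the strip before the available time is exhausted. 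Applying Proposition 3.2 along these characteristics gives
\[
\int_0^A\int_0^S\int_\Omega u_1^2(x,a,s,T)\,dx\,ds\,da\leq C\int_\eta^T\int_{Q_2}u_1^2 .
\]
Finally, writing $u_1=q-u_2$ on the right, using $(a+b)^2\leq 2a^2+2b^2$ and absorbing $u_2$ through Steps 1--2, then adding the corresponding bound for $u_2(\cdot,T)$ (again via Step 1) and recombining $q=u_1+u_2$, I would obtain the observability inequality (A).

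The hard part will be Step 1 together with the geometric case analysis deciding which characteristics are observable. Two competing phenomena have to be reconciled: characteristics with $s-a$ close to $s_e$ have very little room before meeting $\{s=S\}$, while points of the final state lying above the strip ($s-a\geq s_e$) can only be neutralised through the zero boundary datum at $\{s=S\}$ and must therefore hit that boundary before time $\eta$. Checking that the two thresholds $S-s_e$ and $a_1+A-a_0$ together cover every $(a,s)$ — presumably after splitting both the age interval and the size interval into subintervals, exactly as the figures for the $Q_1$ case did — is the delicate part on which the whole argument rests. The alignment of $Q_2$ with the characteristics is precisely what makes this bookkeeping tractable, since it removes the size-strip constraint along each individual characteristic and reduces the analysis to the one-dimensional parabolic observability of Proposition 3.2.
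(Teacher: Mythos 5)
Your Step 1 is essentially the paper's argument for the renewal term (its Proposition giving $\int_\eta^T\int_0^{s_e}\int_\Omega q^2(x,0,\hat s,t)\,dx\,d\hat s\,dt\leq C\int_0^T\int_{Q_2}q^2$, proved by transporting along characteristics with conserved $s-a\in(0,s_e)$ and invoking the parabolic observability of Proposition 3.2), and your observation that $Q_2$ is aligned with the characteristics is exactly the right geometric point. The gap is in Steps 2--4. You yourself flag the delicate case $s-a\geq s_e$: those backward characteristics never enter the strip and must be killed by the boundary condition at $\{s=S\}$. In your scheme $u_1$ carries Cauchy data $q(\cdot,\eta)$ at time $\eta$, and Step 1 forces $\eta>a_1$. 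For $u_1(x,a,s,T)$ to vanish when $s-a\geq s_e$ you need the characteristic to reach $\{s=S\}$ before falling back to $t=\eta$, i.e.\ $T-(S-s)>\eta$, and the worst case $s=a+s_e$ with $a$ small gives $T>\eta+S-s_e>a_1+S-s_e$. This is strictly stronger than the stated threshold $T>S-s_e$ whenever $a_1>0$, so the $u_1+u_2$ splitting cannot reach the claimed observability time on that region; it only yields the theorem for $T>\max\{a_1+S-s_e,\,a_1+A-a_0\}$.

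The paper avoids this by \emph{not} using the $u_1+u_2$ decomposition here. It splits the final-state integral as $\int_0^{s_e}\int_0^{a_0}+\iint_V$, handles the corner $[0,a_0)\times[0,s_e)$ by a direct characteristics argument (its Proposition bounding $\int_0^{a_0}\int_0^{s_e}\int_\Omega q^2(\cdot,T)$, which your $u_1$-type reasoning does reproduce), and for the complement $V$ invokes Lemma 3.2: for $T>\max\{S-s_e,\,a_1+A-a_0\}$ every such $(a,s,T)$ lies in $A_2$, so the representation formula $(\ref{alp})$ expresses $q(x,a,s,T)$ as a pure Duhamel integral of the renewal term over $l\in(\max\{T-A+a,\,T-S+s\},T)$, with no contribution whatsoever from Cauchy data at any intermediate time. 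Cauchy--Schwarz plus Step 1 then closes the estimate. The essential idea you are missing is this domain-of-dependence argument: on the region above and to the right of the corner, $q(\cdot,T)$ is determined by the nonlocal birth term alone, which decouples that region from the time-$\eta$ data and removes the extra $a_1$ from the $S-s_e$ branch of the threshold. Replacing your Steps 2--4 by that representation-formula argument (keeping your Step 1 verbatim) recovers the paper's proof.
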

	For the proof, we need the following results:
	\begin{proposition}
		Let us assume the assumption $(H_1)-(H_3)$ and $(H_5)$. If $a_1<\hat{a}$ and $a_1<\eta<T$ , then there exists a constant $C>0$ such that for every $q_{0}\in K,$ the solution $q$ of the system (\ref{45}) verifies the following inequality:
		\begin{equation}
			\int\limits_{\eta}^{T}\int\limits_{0}^{s_e}\int_{\Omega}q^2(x,0,s,t)dxdsdt\leq C\int_{0}^{T} \int\limits_{a_1}^{a_2}\int\limits_{a}^{a+s_e}\int_{\omega}q^2(x,a,s,t)dxdsdadt \label{B}.
		\end{equation}
	\end{proposition}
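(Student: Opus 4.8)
The plan is to follow the strategy of Proposition~3.1, reducing the estimate of the renewal (non-local) term $q(x,0,s,t)$ to the parabolic observability inequality of Proposition~3.2 carried along the characteristics, while exploiting the diagonal geometry of the band $Q_2$. First I would use the size-support assumption $(H_4)$: since $\beta(a,s,\hat s)=0$ for $s>s_e$, the renewal integral in $(\ref{45})$ runs only over $(0,s_e)$, so it suffices to control $q(x,0,s,t)$ for $s\in(0,s_e)$ and $t\in(\eta,T)$. Next, using $(H_3)$ together with $a_1<\hat a$, the source term $\int_0^{s_e}\beta(a,s,\hat s)q(x,0,\hat s,t)\,d\hat s$ vanishes on the slab $a\in(0,\hat a)$, so after the change of unknown
\[\tilde q(x,a,s,t)=q(x,a,s,t)\exp\left(-\int_0^a\mu_1(\alpha)\,d\alpha-\int_0^s\mu_2(r)\,dr\right),\]
the function $\tilde q$ solves the pure heat equation $\partial_t\tilde q-\partial_a\tilde q-\partial_s\tilde q-\Delta\tilde q=0$ there, exactly as in $(\ref{ad1})$. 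Since the weight lies in $(0,1]$ and $\mu_2\in L^1(0,s_e)$ (as $s_e<S$), it is enough to prove the claimed inequality with $\tilde q$ on both sides.

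The key geometric point, which is what makes the band $Q_2$ more convenient than the box $Q_1$, is the following. Fix $s\in(0,s_e)$ and $t\in(\eta,T)$ and set $w(x,\lambda)=\tilde q(x,t-\lambda,s+t-\lambda,\lambda)$, the value of $\tilde q$ along the backward characteristic issued from $(0,s,t)$. At age $a=t-\lambda$ this characteristic sits at size $s+a$, and since $0<s<s_e$ one has automatically $a<s+a<a+s_e$; that is, the characteristic lies \emph{inside} the band $\{a<\mathrm{size}<a+s_e\}$ for every admissible age, with no case distinction on the size variable (contrary to the $Q_1$ analysis, where one had to split according to $\sup\{s_1,a_1\}$). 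As long as the age stays below $\hat a$, $w$ solves the homogeneous heat equation $\partial_\lambda w-\Delta w=0$ with Neumann boundary conditions.

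I would then split the range of sizes. For $s$ with $s+a_1<S$, the characteristic is interior at age $a_1$; applying Proposition~3.2 on the time window $(t_0,t_1)=(t-a_1-\delta,\,t-a_1)$, for a fixed small $\delta\in(0,\min\{\hat a-a_1,\eta-a_1\})$, yields
\[\int_\Omega\tilde q^2(x,0,s,t)\,dx\le c_1e^{c_2/\delta}\int_{a_1}^{a_1+\delta}\int_\omega\tilde q^2(x,a,s+a,t-a)\,dx\,da.\]
Integrating in $s$ over $(0,s_e)$ and substituting $l=s+a$ turns the size interval into $(a,a+s_e)$, precisely the band, while integrating in $t$ over $(\eta,T)$ with $\tau=t-a$ (here $(\eta-a,T-a)\subset(0,T)$ because $a<a_1+\delta<\eta$) and enlarging $(a_1,a_1+\delta)\subset(a_1,a_2)$ produces exactly the right-hand side of $(\ref{B})$. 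For the remaining sizes $s\in(S-a_1,s_e)$ (nonempty only when $a_1+s_e>S$), the backward characteristic from $(0,s,t)$ first reaches the boundary $\{s=S\}$ at age $S-s<a_1<\hat a$, where $q=0$; since on that segment $w$ solves the homogeneous heat equation issued from a zero Cauchy datum, uniqueness forces $\tilde q(x,0,s,t)=0$, so these sizes contribute nothing. Summing the two contributions and reverting from $\tilde q$ to $q$ gives $(\ref{B})$.

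The main obstacle is the geometric bookkeeping guaranteeing that, for each relevant $(s,t)$, the backward characteristic does reach the age window $(a_1,a_1+\delta)$ while remaining interior and inside the source-free slab $\{a<\hat a\}$ — equivalently, that the only way to miss this window is to hit $\{s=S\}$ first, which is precisely the vanishing case handled above. Keeping track of which boundary each characteristic meets and checking $t-\lambda\in(0,A)$, $s+t-\lambda\in(0,S)$ throughout the segment is where the care lies; once this is settled, the rest reduces to the same Fubini and change-of-variables computation as in Proposition~3.1, with the constant $C=C(\delta)$ degenerating as $\delta\to0$ or $\eta\to a_1$ (cf.\ the corresponding remark there).
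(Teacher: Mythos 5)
Your argument is correct and follows essentially the same route as the paper's proof: pass to $\tilde q$ to remove the mortality terms, use $(H_3)$ to make the equation source-free on the slab $a<\hat a$, transport the parabolic observability of Proposition~3.2 along the backward characteristics issued from $(0,s,t)$, and use the band geometry of $Q_2$ so that the size variable $s+a\in(a,a+s_e)$ lands in the observed region after the Fubini/change-of-variables step. The only (harmless) differences are that you observe on a thin age window $(a_1,a_1+\delta)$ as in the proof of Proposition~3.1, whereas the paper splits instead according to $t\lessgtr\hat a$ and observes on all of $(a_1,\min\{t,\hat a\})$, and that you explicitly dispose of the characteristics exiting through $\{s=S\}$ (where $q$ vanishes by the boundary condition and forward uniqueness), a case the paper's write-up leaves implicit.
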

	\begin{proof}{Proof of the Proposition 3.4}\\
		For $a\in (0,\hat{a})$ we have $\beta(a,\hat{s},s)=0$, therefore the system (\ref{45}) is can be written by
		\begin{equation}
			\left\lbrace\begin{array}{ll}
				\dfrac{\partial q}{\partial t}-\dfrac{\partial q}{\partial a}-\dfrac{\partial q}{\partial s}-\Delta q+(\mu_1(a)+\mu_2(s))q=0&\hbox{ in }Q_{\hat{a}},\\ 
				\dfrac{\partial q}{\partial\nu}=0&\hbox{ on }\Sigma',\\ 
				q(x,a,s,0)=q_0(x,a,s)& \hbox{ in } Q_{\hat{a},S}.
			\end{array}
			\right.
			\label{ad}
		\end{equation}	
		where $Q_{\hat{a}}=\Omega\times(0,\hat{a})\times (0,S)\times (0,T)$ and $\Sigma '=\partial\Omega\times(0,\hat{a})\times (0,S)\times(0,T).$\\
		\\
		We denote by $\tilde{q}(x,a,s,t)=q(x,a,s,t)\exp\left(-\int_{0}^{a}\mu_1(r)dr-\int_{0}^{s}\mu_2(r)dr\right).$ then $\tilde{q}$ satisfies
		\begin{equation}
			\left\lbrace 
			\begin{array}{l}
				\dfrac{\partial \tilde{q}}{\partial t}-\dfrac{\partial \tilde{q}}{\partial a}-\dfrac{\partial \tilde{q}}{\partial s}-\Delta \tilde{q}=0\text{ in }Q_{\hat{a}}\\
				\dfrac{\partial \tilde{q}}{\partial\nu}=0\text{ on }\Sigma '	\end{array}
			\right.
			\label{ad1}
		\end{equation}
		Proving the inequality (\ref{B}) lead also to show that,
		there exits a constant $C>0$ such that the solution $\tilde{q}$ of (\ref{ad1}) satisfies 
		\begin{equation}
			\int\limits_{\eta}^{T}\int\limits_{0}^{s_e}\int_{\Omega}\tilde{q}^2(x,0,s,t)dxdsdt\leq C\int\limits_{0}^{T}\int\limits_{a_1}^{a_2}\int\limits_{a}^{a+s_e}\int_{\omega}\tilde{q}^2(x,\alpha,s,t)dxdsdadt. \label{cci}
		\end{equation}
		Indeed, we have 
		\[
		\int\limits_{\eta}^{T}\int\limits_{0}^{s_e}\int_{\Omega}q^2(x,0,s,t)dxdsdt\leq e^{\left(2\int\limits_{0}^{\hat{a}}\mu_1(r)dr+2\int\limits_{0}^{s_e}\mu_2(r)dr\right)}\int\limits_{\eta}^{T}\int\limits_{0}^{s_e}\int_{\Omega}\tilde{q}^2(x,0,s,t)dxdsdt\]
		then
		\[\int\limits_{\eta}^{T}\int\limits_{0}^{s_e}\int_{\Omega}q^2(x,0,s,t)dxdsdt\leq e^{\left(2\|\mu_1\|_{L^{1}(0,\hat{a})}+2\|\mu_2\|_{L^{1}(0,s_e)}\right)}\int\limits_{\eta}^{T}\int\limits_{0}^{s_e}\int_{\Omega}\tilde{q}^2(x,0,s,t)dxdsdt.
		\]	
		
		We consider the following characteristics trajectory $\gamma(\lambda)=(t-\lambda,s+t-\lambda,\lambda).$ If $t-\lambda=0$ the backward characteristics starting from $(0,s,t).$ If $T<a_1$ the trajectory $\gamma(\lambda)$ never reaches the observation region $Q_2$ (see Figure 9). So we choose $T>a_1.$\\
		Without loss the generality, let us assume here $\eta< a_2=\hat{a}<T.$\\
		The proof will done in two steps:\\
		\[\textbf{step 1: Estimation for } t\in (\eta,\hat{a}) \]
		We denote by:\\
		\[w(x,\lambda)=\tilde{q}(x,t-\lambda,s+t-\lambda,\lambda) \text{ ; }(x\in\Omega\text{, }\lambda\in (0,t))\]
		Then $w$ satisfies:
		\begin{equation}
			\left\lbrace\begin{array}{ll}
				\dfrac{\partial w(x,\lambda)}{\partial \lambda}-\Delta w(x,\lambda)=0&\hbox{ in } \Omega\times (0,t)\\
				\dfrac{\partial w}{\partial \nu}=0& \hbox{ on } \partial\Omega\times (0,t)	\\
				w(x,0)	=\tilde{q}(x,t,s+t,0)&\hbox{ in }\Omega
			\end{array}
			\right.,
		\end{equation}
		Using the Proposition 3.2  with $0<t_0<t_1<t$ we obtain:\\
		\[\int_{\Omega}w^2(x,t)dx\leq\int_{\Omega}w^2(x,t_1)dx\leq c_1e^{\dfrac{c_2}{t_1-t_0}}\int\limits_{t_0}^{t_1}\int_{\omega}w^2(x,\lambda)dxd\lambda.\]
		That is equivalent to
		\[\int_{\Omega}\tilde{q}^2(x,0,s,t)dx\leq c_1e^{\dfrac{c_2}{t_1-t_0}}\int\limits_{t_0}^{t_1}\int_{\omega}\tilde{q}^2(x,t-\lambda,s+t-\lambda,\lambda)dxd\lambda=C\int\limits_{t-t_1}^{t-t_0}\int_{\omega}\tilde{q}^2(x,\alpha,s+\alpha,t-\alpha)dxd\alpha.\]	
		Then for $t_0=0$ and $t_1=t-a_1,$ we obtain\\
		\[\int_{\Omega}\tilde{q}^2(x,0,s,t)dx\leq C\int\limits_{a_1}^{t}\int_{\omega}\tilde{q}^2(x,\alpha,s+\alpha,t-\alpha)dxd\alpha.\]
		Integrating with respect $s$ over $(0,s_e)$ we get
		\[\int\limits_{0}^{s_e}\int_{\Omega}\tilde{q}^2(x,0,s,t)dxds\leq C
		\int\limits_{a_1}^{t}\int\limits_{\alpha}^{s_e+\alpha}\int_{\omega}\tilde{q}^2(x,\alpha,l,t-\alpha)dxdld\alpha.\]
		Finaly, integrating with respect $t$ over $(\eta,\hat{a})$, we obtain
		\[\int\limits_{\eta}^{\hat{a}}\int\limits_{0}^{s_e}\int_{\Omega}\tilde{q}^2(x,0,s,t)dxdsdt\leq C \int\limits_{\eta}^{\hat{a}}\int\limits_{a_1}^{t}\int\limits_{a}^{s_e+a}\int_{\omega}\tilde{q}^2(x,a,s,t-a)dxdsdadt.\]
		Then
		\[\int\limits_{\eta}^{\hat{a}}\int\limits_{0}^{s_e}\int_{\Omega}\tilde{q}^2(x,0,s,t)dxdsdt\leq C \int\limits_{a_1}^{\hat{a}}\int\limits_{\nu}^{\hat{a}}\int\limits_{a}^{s_e+a}\int_{\omega}\tilde{q}^2(x,a,s,t-\nu)dxdtdsd\nu\]
		\[\leq C\int_{0}^{T} \int\limits_{a_1}^{a_2}\int\limits_{a}^{a+s_e}\int_{\omega}\tilde{q}^2(x,a,s,t)dxdsdadt.\]
		Then
		\begin{equation}
			\int\limits_{\eta}^{\hat{a}}\int\limits_{0}^{s_e}\int_{\Omega}\tilde{q}^2(x,0,s,t)dxdsdt\leq C\int_{0}^{T} \int\limits_{a_1}^{a_2}\int\limits_{a}^{a+s_e}\int_{\omega}\tilde{q}^2(x,a,s,t)dxdsdadt \label{z}
		\end{equation}
		\[\textbf{step 2: Estimation for } t\in(\hat{a},T)\]
		We denote by:\\
		\[w(x,\lambda)=\tilde{q}(x,t-\lambda,s+t-\lambda,\lambda) \text{ ; }(\lambda\in (t-\hat{a},t)\text{ , }x\in\Omega)\]Then $w$ satisfies:
		\begin{equation}
			\left\lbrace
			\begin{array}{l}
				\dfrac{\partial w(x,\lambda)}{\partial \lambda}-\Delta w(x,\lambda)=0\text{ in }  \Omega\times(t-\hat{a},t)\\
				\dfrac{\partial w}{\partial \nu}=0 \text{ on } \partial\Omega\times(t-\hat{a},t)	\\
				w(x,0)	=\tilde{q}(x,\hat{a},s+\hat{a},t-\hat{a})\text{ in }\Omega
			\end{array}
			\right.,
		\end{equation}
		Using the Proposition 3.2  with $t-\hat{a}<t_0<t_1<t$ we obtain:\\
		\[\int_{\Omega}w^2(x,t)dx\leq\int_{\Omega}w^2(x,t_1)dx\leq c_1e^{\dfrac{c_2}{t_1-t_0}}\int\limits_{t_0}^{t_1}\int_{\omega}w^2(x,\lambda)dxd\lambda.\]
		That is equivalent to
		\[\int_{\Omega}\tilde{q}^2(x,0,s,t)dx\leq c_1e^{\dfrac{c_2}{t_1-t_0}}\int\limits_{t_0}^{t_1}\int_{\omega}\tilde{q}^2(x,t-\lambda,s+t-\lambda,\lambda)dxd\lambda=C\int\limits_{t-t_1}^{t-t_0}\int_{\omega}\tilde{q}^2(x,\alpha,s+\alpha,t-\alpha)dxd\alpha.\]	
		Then for $t_0=t-\hat{a}$ and $t_1=t-a_1,$ we obtain\\
		\[\int_{\Omega}\tilde{q}^2(x,0,s,t)dx\leq C\int\limits_{a_1}^{\hat{a}}\int_{\omega}\tilde{q}^2(x,\alpha,s+\alpha,t-\alpha)dxd\alpha.\]
		Integrating with respect $s$ over $(0,s_e)$ we get
		\[\int\limits_{0}^{s_e}\int_{\Omega}\tilde{q}^2(x,0,s,t)dxds\leq C
		\int\limits_{a_1}^{\hat{a}}\int\limits_{\alpha}^{s_e+\alpha}\int_{\omega}\tilde{q}^2(x,\alpha,l,t-\alpha)dxdld\alpha.\]
		Finally, integrating with respect $t$ over $(\hat{a},T)$, we obtain
		\[\int\limits_{\hat{a}}^{T}\int\limits_{0}^{s_e}\int_{\Omega}\tilde{q}^2(x,0,s,t)dxdsdt\leq C \int\limits_{a_1}^{a_2}\int\limits_{a}^{s_e+a}\int\limits_{\hat{a}-a}^{T-a}\int_{\omega}\tilde{q}^2(x,a,s,t)dxdtdsda\]\[\leq C\int\limits_{0}^{T} \int\limits_{a_1}^{a_2}\int\limits_{a}^{a+s_e}\int_{\omega}\tilde{q}^2(x,a,s,t)dxdsdadt.\]
		Then
		\begin{equation}
			\int\limits_{\hat{a}}^{T}\int\limits_{0}^{s_e}\int_{\Omega}\tilde{q}^2(x,0,s,t)dxdsdt\leq C\int\limits_{0}^{T} \int\limits_{a_1}^{a_2}\int\limits_{a}^{a+s_e}\int_{\omega}\tilde{q}^2(x,a,s,t)dxdsdadt \label{zz}
		\end{equation}
		Combining $(\ref{z})$ and $(\ref{zz}),$ we obtain the result.
	\end{proof}
	\begin{proposition}
		Let us assume the assumption $(H_1)-(H_3)$ and $(H_5).$ If $a_1<a_0<\hat{a},$ there exists $C_T>0$ such that the solution $q$ of the system $(\ref{45})$ verifies the following inequality
		\begin{equation}
			\int\limits_{0}^{a_0}\int\limits_{0}^{s_e}\int_{\Omega}q^2(x,a,s,T)dxdsda\leq C\int\limits_{0}^{T} \int\limits_{a_1}^{a_2}\int\limits_{a-a_0}^{a+s_e}\int_{\omega}q^2(x,a,s,t)dxdsdadt \label{C}
		\end{equation}
	\end{proposition}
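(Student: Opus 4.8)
The plan is to estimate the final state $q(\cdot,a,s,T)$ on the slab $a\in(0,a_0)$, $s\in(0,s_e)$ by the same characteristic-plus-parabolic-observability scheme used for Proposition 3.6, exploiting that this slab lies below the fertility threshold $\hat a$. \textbf{Step 1 (reduction to a homogeneous heat equation along characteristics).} Since $a_0<\hat a$, hypothesis $(H_3)$ gives $\beta(a,\cdot,\cdot)=0$ for $a\in(0,a_0)$, so on $\Omega\times(0,\hat a)\times(0,S)\times(0,T)$ the renewal source in $(\ref{45})$ is absent. As in the previous proofs I set $\tilde q=q\exp\!\big(-\int_0^a\mu_1-\int_0^s\mu_2\big)$, which removes the mortality terms and gives $\partial_t\tilde q-\partial_a\tilde q-\partial_s\tilde q-\Delta\tilde q=0$ there; it suffices to prove $(\ref{C})$ for $\tilde q$, at the price of the fixed factor $e^{2\|\mu_1\|_{L^1(0,\hat a)}+2\|\mu_2\|_{L^1(0,S)}}$. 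For fixed $(a,s)$ in the slab I then freeze $w(x,\lambda)=\tilde q(x,a+T-\lambda,s+T-\lambda,\lambda)$, which solves the pure heat equation $\partial_\lambda w-\Delta w=0$ as long as the age argument $\alpha=a+T-\lambda$ stays below $\hat a$, with $w(\cdot,T)=\tilde q(\cdot,a,s,T)$.

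\textbf{Step 2 (the dichotomy).} The key geometric point is that $\sigma-\alpha=s-a$ is conserved along the characteristic and satisfies $-a_0<s-a<s_e$; thus $(\alpha,\sigma)$ stays inside the slanted band $\alpha-a_0<\sigma<\alpha+s_e$ that defines $Q_2$. Running $\lambda$ backward from $T$, both $\alpha$ and $\sigma$ increase from $(a,s)$, and exactly one of two things happens \emph{before the age reaches} $\hat a$: either the characteristic meets the age window $\alpha\in(a_1,a_2)$ (hence enters $Q_2$ once $x\in\omega$), or it leaves the cylinder through the size boundary $\sigma=S$. In the second case $w$ solves the homogeneous heat equation on the corresponding subinterval with zero datum $w=\tilde q(\cdot,\cdot,S,\cdot)=0$ at the exit time, so $w(\cdot,T)=\tilde q(\cdot,a,s,T)=0$ and that characteristic contributes nothing to the left-hand side of $(\ref{C})$. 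Only the first case has to be estimated.

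\textbf{Step 3 (parabolic observability and reassembly).} For a characteristic of the first type I apply Proposition 3.2 to $w$ on $(t_0,T)$, observing on a short window $(t_0,t_1)$ in which $\alpha$ runs through $(a_1,a_1+\kappa)$ when $a<a_1$, or through $(a,a+\kappa)$ when $a\in(a_1,a_0)$; the homogeneity constraint $t_0>T-(\hat a-a)$ is met because $a_1<\hat a$. This yields, pointwise in $(a,s)$,
\[
\int_\Omega\tilde q^2(x,a,s,T)\,dx\le c_1e^{c_2/\kappa}\int_{t_0}^{t_1}\int_\omega\tilde q^2\big(x,a+T-\lambda,s+T-\lambda,\lambda\big)\,dx\,d\lambda .
\]
I would then set $\alpha=a+T-\lambda$, $\sigma=s-a+\alpha$, $t=a+T-\alpha$, integrate successively in $s\in(0,s_e)$ and $a\in(0,a_0)$, and apply Fubini. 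Because $s-a$ sweeps $(-a_0,s_e)$ while $\alpha$ stays in $(a_1,a_2)$ and $t$ stays in $(0,T)$, the accumulated right-hand side is dominated by $C\int_0^T\int_{a_1}^{a_2}\int_{\alpha-a_0}^{\alpha+s_e}\int_\omega\tilde q^2$, i.e.\ the control integral over $Q_2$. Undoing the weight of Step 1 gives $(\ref{C})$.

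\textbf{Main difficulty.} The analytic ingredients (the weighted change of unknown and Proposition 3.2) are routine; the real work lies in Steps 2--3, namely checking that the dichotomy is exhaustive and that the change of variables $(a,s,\lambda)\mapsto(\alpha,\sigma,t)$ really maps the observation windows into $(a_1,a_2)\times(\alpha-a_0,\alpha+s_e)\times(0,T)$ without overlap or loss, uniformly over the slab. This is where $a_1<a_0<\hat a$ and the lower bound on $T$ enter, and where the two sub-cases $a<a_1$ and $a\in(a_1,a_0)$, together with the characteristics that escape through $s=S$, must be tracked with care.
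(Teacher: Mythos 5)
Your proposal is correct and follows essentially the same route as the paper's proof: the weighted unknown $\tilde q$, the characteristic change of variables $w(x,\lambda)=\tilde q(x,a+T-\lambda,s+T-\lambda,\lambda)$, the parabolic observability of Proposition 3.2 applied on a $\lambda$-window where the age argument lies in $(a_1,a_2)$ (hence below $\hat a$, so the renewal source vanishes), and a Fubini/change-of-variables reassembly landing in the slanted band $\alpha-a_0<\sigma<\alpha+s_e$ that defines $Q_2$. The only cosmetic differences are that the paper splits $(0,a_0)$ into $(0,a_2-T)\cup(a_2-T,a_1)\cup(a_1,a_0)$ and observes over the full age window $(a_1,a_2)$ where you use short $\kappa$-windows near $a_1$ (or near $a$), and that you state explicitly the (correct, but left implicit in the paper) observation that characteristics escaping through $s=S$ carry zero data.
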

	\begin{proof}{Proof of the Proposition 3.5}\\
		We consider in this proof the characteristics $\gamma (\lambda)=(a+T-\lambda,s+T-\lambda,\lambda).$
		For $\lambda=T$ the characteristics starting from $(a,s,T).$\\
		We have three cases.\\
		\textbf{Case 1}: $T<a_2$ and $a_2\leq \hat{a}.$\\
		Two situations can arise: \\
		$\bullet$ $b_0=a_2-T<a_1<a_0$ in this situation we split the interval $(0,a_0)$ as \begin{equation}(0,a_0)=(0,b_0)\cup(b_0,a_1)\cup(a_1,a_0)\label{iu}.\end{equation}\\
		$\bullet$ $a_1<b_0<a_0$, in this situation we split the interval $(0,a_0)$ as $(0,a_0)=(0,b_0)\cup (b_0,a_0).$	\\ 
		\textbf{Case 2}: $T\geq a_2$ and $a_2\leq\hat{a}.$\\ In this case we split the interval $(0,a_0)$ as $(0,a_0)=(0,a_1)\cup (a_1,a_0).$\\
		\textbf{Case 3}: $a_2>\hat{a}$:\\ In this case we proof similarly the observability in $(a_1,a_0)$ and to expand to $(a_1,a_2)$.\\
		Here we give the proof in the only situation where 
		\begin{equation}
			T<a_2 \text{ and } b_0=a_2-T<a_1. \label{el}
		\end{equation}
		In the remaining part of the proof we give upper bounds for $\int_{0}^{S}\int_{I}\int_{\Omega}\tilde{q}(x,a,s,T)dxdadt$
		where $I$ is successively each one of the intervals appearing in the decomposition $(\ref{iu})$\\
		\textbf{Upper bound on $(0,b_0)$}:\\
		For $a\in(0,b_0)$ we first set 
		$w(\lambda,x)=\tilde{q}(x,a+T-\lambda,s+T-\lambda,\lambda)\text{    } (\lambda\in (0,T)\text{ and } x\in\Omega)$ where $$\tilde{q}=\exp\left(-\int\limits_{0}^{a}\mu(r)dr-\int\limits_{0}^{s}\mu_2(r)dr\right)q.$$
		Then $w$ verifies 
		\begin{equation}
			\left\lbrace
			\begin{array}{l}
				\dfrac{\partial w(x,\lambda)}{\partial \lambda}-\Delta w(x,\lambda)=0\text{ in } (0,T)\times \Omega\\
				\dfrac{\partial w}{\partial \nu}=0 \text{ on }(0,T)\times \partial\Omega	\\
				w(x,0)	=\tilde{z}(x,a+T,s+T,0)\text{ in }\Omega
			\end{array}
			\right.,\label{ooo}
		\end{equation}
		By applying the Proposition 3.2 with $t_0=0$ and $t_1=a+T-a_1,$ we obtain:\\
		\[\int_{\Omega}w^2(x,T)dx\leq c_1e^{\dfrac{c_2}{a+T-a_1}}\int\limits_{0}^{a+T-a_1}\int_{\omega}w^2(x,\lambda)dxd\lambda.\]
		
		Then we have 
		\[\int_{\Omega}\tilde{q}^2(x,a,s,T)dx\leq c_1e^{\dfrac{c_2}{a+T-a_1}}\int\limits_{0}^{a+T-a_1}\int_{\omega}\tilde{q}^2(x,a+T-\lambda,s+T-\lambda,\lambda)dxd\lambda\]\[=C\int\limits_{a_1}^{a+T}\int_{\omega}\tilde{q}^2(x,\alpha,s+\alpha-a,a+T-\alpha)dxd\alpha\leq C\int\limits_{a_1}^{a_2}\int_{\omega}\tilde{q}^2(x,\alpha,s+\alpha-a,a+T-\alpha)dxd\alpha.\]	
		Integrating with respect $s$ over $(0,s_e)$ we get
		\[\int\limits_{0}^{s_e}\int_{\Omega}\tilde{q}^2(x,a,s,T)dxds\leq C\int\limits_{0}^{s_e}\int\limits_{a_1}^{a_2}\int_{\omega}\tilde{q}^2(x,\alpha,s+\alpha-a,a+T-\alpha)dxd\alpha.\]
		As
		\[\int\limits_{0}^{s_e}\int\limits_{a_1}^{a_2}\int_{\omega}\tilde{q}^2(x,\alpha,s+\alpha-a,a+T-\alpha)dxd\alpha ds=\int\limits_{a_1}^{a_2}\int\limits_{0}^{s_e}\int_{\omega}\tilde{q}^2(x,\alpha,s+\alpha-a,a+T-\alpha)dxd\alpha ds,\]
		then
		\[\int\limits_{0}^{s_e}\int_{\Omega}\tilde{q}^2(x,a,s,T)dxds\leq C\int\limits_{a_1}^{a_2}\int\limits_{\alpha-a}^{s_e+a}\int_{\omega}\tilde{q}^2(x,\alpha,l,a+T-\alpha)dxdld\alpha .\]
		So, integrating with respect $a$ over $(0,b_0)$ we get
		\[\int\limits_{0}^{b_0}\int\limits_{0}^{s_e}\int_{\Omega}\tilde{q}^2(x,a,s,T)dxdsda\leq C \int\limits_{0}^{b_0} \int\limits_{a_1}^{a_2}\int\limits_{\alpha-b_0}^{s_e+\alpha}\int\limits_{\omega}\tilde{q}^2(x,\alpha,l,a+T-\alpha)dxdl d\alpha da\]
		\[=C \int\limits_{a_1}^{a_2}\int\limits_{a-b_0}^{s+a}\int\limits_{T-\alpha}^{T-\alpha+b_0}\int_{\omega}\tilde{q}^2(x,a,s,t)dxdtdsda\leq C\int_{0}^{T} \int\limits_{a_1}^{a_2}\int\limits_{a-b_0}^{a+s_e}\int_{\omega}\tilde{q}^2(x,a,s,t)dxdsdadt. \]
		Finally
		\begin{equation}
			\int\limits_{0}^{b_0}\int\limits_{0}^{s_e}\int_{\Omega}\tilde{q}^2(x,a,s,T)dxdsda\leq C\int\limits_{0}^{T} \int\limits_{a_1}^{a_2}\int\limits_{a-b_0}^{a+s_e}\int_{\omega}\tilde{q}^2(x,a,s,t)dxdsdadt \label{7}
		\end{equation}
		\textbf{Upper bound $(b_0,a_1)$}:\\
		For $a\in (b_0,a_1)$	we consider always the system $(\ref{ooo})$ but $\lambda\in (T+a-a_2,T).$\\
		Applying the Proposition 3.2 with $t_0=T+a-a_2$ and $t_1=T+a-a_1$, we obtain
		\begin{equation*}
			\displaystyle\int_{\Omega}\tilde{q}^2(x,a,s,T)dx\leq C\int\limits_{T+a-a_2}^{a+T-a_1}\int_{\omega}\tilde{q}^2(x,a+T-\lambda,s+T-\lambda,\lambda)dxd\lambda= C\int\limits_{a_1}^{a_2}\int_{\omega}\tilde{q}^2(x,\alpha,s+\alpha-a,a+T-\alpha)dxd\alpha.
		\end{equation*}	
		And as before we get 
		\begin{equation}
			\displaystyle\int\limits_{b_0}^{a_1}\int\limits_{0}^{s_e}\int_{\Omega}\tilde{q}^2(x,a,s,T)dxdsda\leq C\int\limits_{0}^{T}\int\limits_{a_1}^{a_2}\int\limits_{a-a_1}^{a+s_e}\int_{\omega}\tilde{q}^2(x,a,s,t)dxdsdadt \label{9}
		\end{equation}
		\textbf{Upper bound $(a_1,a_0)$}:\\
		For $a\in (a_1,a_0)$	 we use again the system $(\ref{ooo})$ but $\lambda\in (T+a-a_2,T).$\\
		Applying the Proposition 3.2 with $t_0=T+a-a_2$ and $t_1=T$, we obtain
		\[\int_{\Omega}\tilde{q}^2(x,a,s,T)dx\leq C\int\limits_{T+a-a_0}^{a+T-a_1}\int_{\omega}\tilde{q}^2(x,a+T-\lambda,s+T-\lambda,\lambda)dxd\lambda= C\int\limits_{a_1}^{a_0}\int_{\omega}\tilde{q}^2(x,\alpha,s+\alpha-a,a+T-\alpha)dxd\alpha.\]	
		Integrating with respect $s$ over $(0,S)$ and $a$ over $(a_1,a_0)$ we get
		\begin{equation}
			\int\limits_{a_1}^{a_0}\int\limits_{0}^{s_e}\int_{\Omega}\tilde{q}^2(x,a,s,T)dxdsda\leq C\int\limits_{0}^{T}\int\limits_{a_1}^{a_1}\int\limits_{a-a_0}^{a+s_e}\int_{\omega}\tilde{q}^2(x,a,s,t)dxdsdadt \label{10}
		\end{equation}
		Consequently, combining $(\ref{7})$, $(\ref{9})$ and $\ref{10}$ we obtain:
		\[\int\limits_{0}^{s_e}\int\limits_{0}^{a_0}\int_{\Omega}q^2(x,a,s,T)dxdads\leq C\int\limits_{0}^{T}\int\limits_{a_1}^{a_2}\int\limits_{a-a_0}^{a+s_e}\int_{\omega}q^2(x,a,s,t)dxdadsdt.\]	
	\end{proof} 
	For it, we consider the section $S(T)$ define by $$S(T)=\{(a,s,t)\in Q'\hbox{ such that }t=T\}$$
	and split this surface into two parts $$V_1(T)=[0,a_0)\times [0,S] \hbox{ and } V_2(T)=[a_0,A]\times [0,S].$$ From $T>\max\{a_1+A-a_0,a_1+S-s_2\},$ we get the following result:	
	\begin{lemma}
		Let us suppose that $T>\max\{a_1+A-a_0,S-s_e\}$ where $a_0\in [a_1,\hat{a}].$ Then $A_1\cap V_2(T)=\emptyset$	
		and $\max\{T-A+a,  T-S+s\}>a_1$ in $V_2(T).$ Moreover \[q(x,a,s,T)=\int\limits_{\max\{T-A+a,  T-S+s\}}^{T}\left(e^{(T-l)L}\int\limits_{0}^{s_e}\beta(a+T-l,s+T-l,\hat{s})q(x,0,\hat{s},l)d\hat{s})\right)dl\] in $\Omega\times V_2(T).$
	\end{lemma}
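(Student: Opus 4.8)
The plan is to establish the three assertions in turn. The first two are elementary consequences of the time condition $T>a_1+A-a_0$, while the collapse of the integration range rests on hypothesis $(H_4)$; the representation itself is then inherited from Proposition 2.2.

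First I would show $A_1\cap V_2(T)=\emptyset$. Let $(a,s,T)\in V_2(T)$, so $a\in[a_0,A]$. Since $a_1\geq 0$ by the standing convention $0\leq a_1<a_2\leq A$, the hypothesis gives $T>a_1+A-a_0\geq A-a_0$. For $a\geq a_0$ one has $A-a\leq A-a_0<T$, so the defining inequality $0<T<A-a$ of $A_1$ cannot hold. Hence no point of $V_2(T)$ lies in $A_1$.

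Next I would check that $\max\{T-A+a,\,T-S+s\}>a_1$ on $V_2(T)$. Using again $a\geq a_0$ together with $T>a_1+A-a_0$, we get $T-A+a\geq T-A+a_0>a_1$, whence $\max\{T-A+a,\,T-S+s\}\geq T-A+a>a_1$. In particular this lower endpoint is strictly positive and, since $a<A$ and $s<S$, strictly below $T$, so the integral in the representation runs over a nondegenerate interval.

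Finally I would derive the formula. Because $Q'=A_1\cup A_2$ and $V_2(T)\cap A_1=\emptyset$ by the first step, every point $(a,s,T)$ of $V_2(T)\cap Q'$ belongs to $A_2$. Evaluating the representation of Proposition 2.2 valid on $A_2$ at $t=T$ gives $q(x,a,s,T)=\int_{\max\{T-A+a,\,T-S+s\}}^{T}\Big(e^{(T-l)L}\int_{0}^{S}\beta(a+T-l,s+T-l,\hat s)\,q(x,0,\hat s,l)\,d\hat s\Big)\,dl$. By $(H_4)$ the kernel vanishes, $\beta(\cdot,\cdot,\hat s)=0$ for $\hat s\in(s_e,S)$, precisely in the newborn-size slot occupied by the integration variable $\hat s$ in the adjoint renewal term, so the inner integral over $(0,S)$ reduces to one over $(0,s_e)$, yielding the asserted identity on $\Omega\times V_2(T)$.

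The argument is essentially bookkeeping; the only steps needing attention are confirming $V_2(T)\subset A_2$ (rather than merely disjoint from $A_1$), which is immediate from $Q'=A_1\cup A_2$, and matching the truncation at $s_e$ to the indexing of $(H_4)$, i.e. verifying that $\hat s$ really sits in the newborn-size argument of $\beta$ so the reduction to $(0,s_e)$ is legitimate. The auxiliary hypothesis $T>S-s_e$ plays no role in these three structural conclusions; it is carried over from Theorem 3.2 and will be used afterwards when estimating the renewal trace $q(x,0,s,t)$ on $V_2(T)$.
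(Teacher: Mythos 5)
Your proof is correct and follows essentially the same route as the paper: disjointness of $V_2(T)$ from $A_1$ via $T>a_1+A-a_0\geq A-a_0\geq A-a$, the lower bound $T-A+a\geq T-A+a_0>a_1$, and then the $A_2$ representation of Proposition 2.2 truncated to $(0,s_e)$ by $(H_4)$. Your side remark that $T>S-s_e$ is not needed for these three conclusions is accurate (the paper invokes $S-T<s_e$ as well, but only $A-T<a_0$ is actually used for the disjointness, since $V_2(T)$ spans all sizes); that hypothesis only enters in the subsequent estimates.
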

	\begin{proof}{Proof of Lemma}\\
		Let suppose that $T>\max\{a_1+A-a_0,S-s_e\}.$ In $V_2(T),$ we have $T+a-A>T+a_0-A>a_1\hbox{ because }a\in (a_0,A)$ Moreover, we have $A-T<a_0-a_1<a_0,$ as $T>S-s_e,$ we have also $S-T<s_e.$ Finally, if $T>\max\{ A-a_0+a_1,S-s_e\},$ we have $A-T<a_0$ and $S-T<s_e$ then $V_2(T)\cap A_1=\emptyset$ (see Figure 1 and Figure 9), therefore we have:
		\[q(x,a,s,T)=\int\limits_{\max\{T-A+a,  T-S+s\}}^{T}\left(e^{(T-l)L}\int\limits_{0}^{s_e}\beta(a+T-l,s+T-l,\hat{s})q(x,0,\hat{s},l)d\hat{s})\right)dl\] in $\Omega\times V_2(T)$ (see Figure 9).		 
	\end{proof}
	\begin{proof}{Proof of the Theorem 3.2}\\
		We suppose that $T>\sup\{A-a_0+a_1,S-s_e\}$ and we 
		consider $a_0$, $s_e$ as in the previous Lemma. then
		\[\int\limits_{0}^{S}\int\limits_{0}^{A}\int_{\Omega}q^2(x,a,s,T)dxdads\]		\[=\int\limits_{0}^{s_e}\int\limits_{0}^{a_0}\int_{\Omega}q^2(x,a,s,T)dxdads+\int\int_{V}\int_{\Omega}q^2(x,a,s,T)dxdads\]	
		From the Proposition 3.2 we have 
		\begin{equation}
			\int\limits_{0}^{s_e}\int\limits_{0}^{a_0}\int_{\Omega}q^2(x,a,s,T)dxdads\leq C_{T}\int\limits_{0}^{T}\int\limits_{a_1}^{a_2}\int\limits_{a-a_0}^{a+s_e}\int_{\omega}q^2(x,a,s,t)dxdsdadt.\label{x1}
		\end{equation}	
		Using the result of the Lemma 3.2, we obtain:\\
		\[q(x,a,s,T)=\int\limits_{\max\{T-A+a,  T-S+s\}}^{T}\left(e^{(T-l)L}\int\limits_{0}^{s_e}\beta(a+T-l,s+T-l,\hat{s})q(x,0,\hat{s},l)d\hat{s})\right)dl\]
		Using the hypothesis on $\beta$ the regularity of the semi-group and the inequality of Cauchy Schwartz, we obtain:
		\[\int\int_{V}\int_{\Omega}q^2(x,a,s,T)\leq  C(\|\beta\|_{\infty},T)\int\int_{V}\int_{\Omega}\int\limits_{\max\{T-A+a,  T-S+s\}}^{T}\int\limits_{0}^{s_e}q^2(x,0,s,t)dsdt.\]
		And finally the Lemma 3.2 gives:
		\begin{equation}
			\int\int_{V}\int_{\Omega}q^2(x,a,s,T)dxdads\leq AS C(\|\beta\|_{\infty},T)\int\limits_{\eta}^{T}\int\limits_{0}^{s_e}\int_{\Omega}q^2(x,0,s,t)dxdsdt.
		\end{equation}
		Using the Proposition 3.1
		\begin{equation}
			\int\int_{V}\int_{\Omega}q^2(x,a,s,T)dxdads\leq K_T\int\limits_{0}^{T}\int\limits_{a_1}^{a_2}\int\limits_{a-a_0}^{a+s_e}\int_{\omega}q^2(x,a,s,t)dxdadsdt.\label{x2}
		\end{equation}
		Combining the inequality $(\ref{x1})$ and $(\ref{x2})$  we obtain.
		\begin{equation}
			\int\limits_{0}^{S}\int\limits_{0}^{A}\int_{\Omega}q^2(x,a,s,T)dxdads\leq K_T\int\limits_{0}^{T}\int\limits_{a_1}^{a_2}\int\limits_{a-a_0}^{a+s_e}\int_{\omega}q^2(x,a,s,t)dxdsdadt.
		\end{equation}
		\begin{figure}[H]
			\begin{overpic}[scale=0.45]{yu9}
				\put (25,55) {$S$}
				\put (35,50) {$S_1(T)$}
				\put (60,50) {$S_2(T)$}
				\put (69,12) {$A$}
				\put (52,14) {$a_0$}
				\put (21,30) {$S-T$}
				\put (39,34) {$Observation$}
				\put (32,21) {$A_1$}               
				\put (40,12) {$A-T$}     
			\end{overpic}
			\caption{This figure illustrates the result of the Lemma 3.1. Indeed, this figure corresponds to the section $S(T),$ and for $T>A-a_2+T_0+T_1$, we have $A_1\cap S_1(T)=\emptyset $}
		\end{figure}
		\begin{figure}[H]
			\begin{overpic}[scale=0.35]{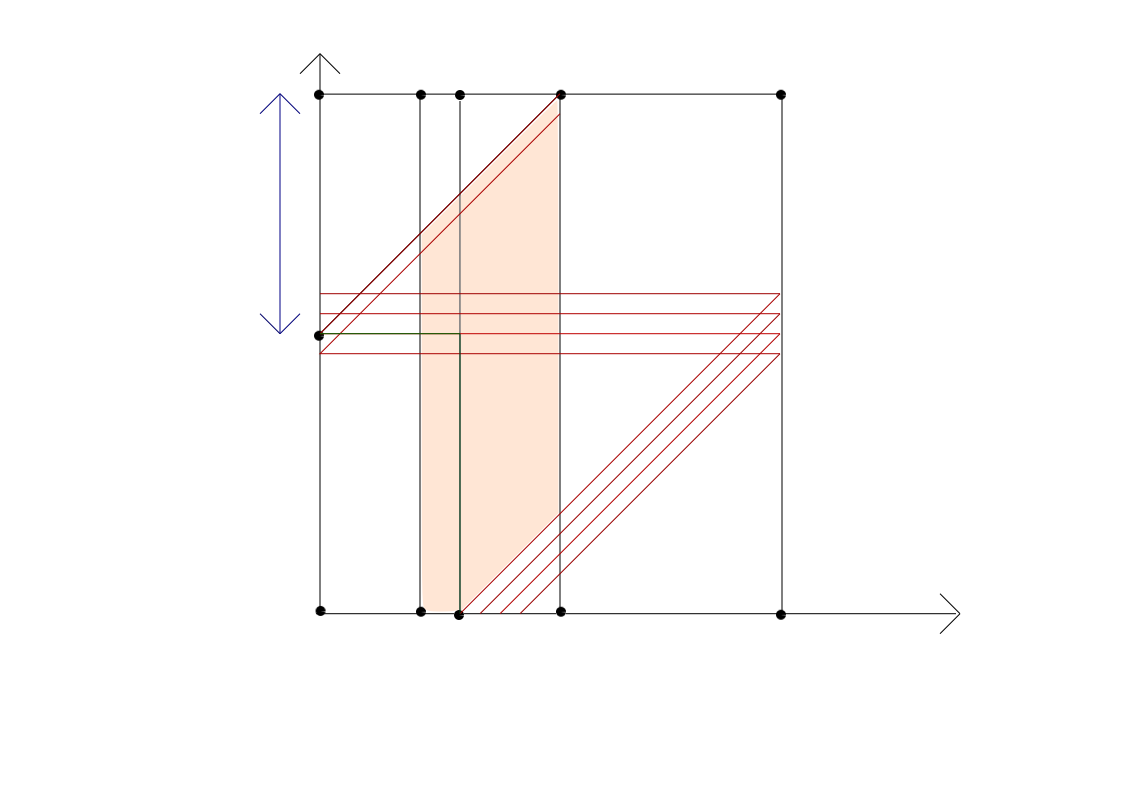}
				\put (25,61) {$S$}
				\put (-10,50) {$\beta(a,\hat{s},s)q(x,0,s,t)=0$}	
				\put (69,12) {$A$}
				\put (49,12) {$a_2$}
				\put (25,40) {$s_e$}
				\put (39,34) {$Obs$}
				\put (36,12) {$a_1$}               
				\put (40,64) {$a_0$}     
			\end{overpic}
			\caption{Illustration of the Lemma 3.2\\ The backward characteristics starting at $(a,s,T)$ with $a\in (a_0,A)$ hit the boundary condition $a=A$ (the maximal time to hit the boundary $a=A$ is $t=A-a_0$) and gets renewal by the renewal term $\int\limits_{0}^{s_e}\beta(a,s,\hat{s})q(x,0,\hat{s},t)d\hat{s}$ with $s\in (0,s_e)\text{ or }s\in (s_e,S).$\\ If $s\in (0,s_e)$ the characteristics need again $a_1$ time to enter in observation domain else the renewal term equal to zero }
		\end{figure}
		Then, (see Figure 10)
		\begin{equation}
			\int\limits_{0}^{S}\int\limits_{0}^{A}\int_{\Omega}q^2(x,a,s,T)dxdads\leq K_T\int\limits_{0}^{T}\int_{Q_{2}}q^2(x,a,s,t)dxdsdadt.
		\end{equation}
	\end{proof}
	\section{Controls preserving positivity}
	In this part we are interested in a problem of controllability with constraint of positivity on the state. We first establish the existence of steady solution for this model.
	And we study the existence of controls such that the corresponding state trajectories join twice different non-negative stationary states in some $T$, while preserving the positivity of the controlled trajectory for $t\in [0,T]$. This type of result has been proved in \cite{dy,dd} for the Lotka-McKendrick system with diffusion and by Loh\'eac, Tr\'elat and Zuazua for purely parabolic problems \cite{b5} (in a time depending on an appropriate norm of the difference of the two stationary states). We prove below that the situation encountered in the latter case also applies to the problem considered in the present work. An essential ingredient in obtaining this type of result is proving the null controllability of the system by means of $L^{\infty}$ controls and then slowly (s.t. positivity is preserved) driving, the initial state towards the desired target.\\
	To state our result on controllability with positivity constraints, we first define the concept
	of non-negative steady state for the system (\ref{2}). \\
	\subsection{Existence of steady states to (\ref{2})}
	In this subsection, we suppose that $\mu_2=0$ for simplify the calculus, Moreover, we suppose in this part that $\beta $ and $\mu_1$ verify $(H_1)$ and $(H_3).$
	% and $\beta=0\hbox{ for every } a\in (0,\hat{a}).$
	Let $\overline{Q}=\Omega\times(0,A)\times(0,S)\hbox{, }\overline{\Sigma}=\partial\Omega\times(0,A)\times(0,S)\hbox{, }\overline{Q}_A=\Omega\times(0,A)\hbox{, }\overline{Q}_S=\Omega\times(0,S).$\\
	We consider the following system
	\begin{equation}
		\left\lbrace\begin{array}{ll}
			\dfrac{\partial y}{\partial t}+\dfrac{\partial y}{\partial a}+\dfrac{\partial y }{\partial s}-\Delta y+\mu_1(a)y=u &\hbox{ in }\overline{Q}\times(0,+\infty) ,\\ 
			\dfrac{\partial y}{\partial\nu}=0&\hbox{ on }\overline{\Sigma}\times(0,+\infty),\\ 
			y\left( x,0,s,t\right) =\displaystyle\int\limits_{0}^{A}\int\limits_{0}^{S}\beta(a,\hat{s},s)y(x,a,\hat{s},t)da d\hat{s}&\hbox{ in } \overline{Q}_{S}\times(0,+\infty) \\
			y\left(x,a,s,0\right)=0&
			\hbox{ in }\overline{Q};\\
			y(x,a,0,t)=0& \hbox{ in } \overline{Q}_{A}\times(0,+\infty).
		\end{array}\right..
		\label{222}
	\end{equation}
	where $u\in L^{\infty}(\overline{Q}\times(0,+\infty)).$\\
	\begin{definition}
		Let $u\in L^{\infty}(\overline{Q})$ be a steady interior control such that
		\[u\geq 0 \hbox{ a.e. in } \overline{Q}.\]
		A non-negative function $p\in L^{\infty}(\overline{Q})$ satisfying the equations	
		\begin{equation}
			\left\lbrace
			\begin{array}{ll}
				\dfrac{\partial p}{\partial a}+\dfrac{\partial p}{\partial s}-\Delta p+\mu_1(a)p=u &\text{ in }\overline{Q} ,\\ 
				\dfrac{\partial p}{\partial\nu}=0&\text{ on }\overline{\Sigma},\\ 
				p\left( x,0,s\right) =\int\limits_{0}^{A}\int\limits_{0}^{S}\beta(a,\hat{s},s)p(x,a,\hat{s})da d\hat{s}&\text{ in } \overline{Q}_S \\
				p\left(x,a,0\right)=0&
				\text{ in }\overline{Q}_A;\\
				
			\end{array}
			\right.\label{www}
		\end{equation}	  
		is said to be a non-negative steady state for the system $(\ref{222}).$
	\end{definition}
	The following gives the existence of a steady solution of the system $(\ref{222}).$
	\begin{proposition}
		Denote by $$\mathcal{R}(s)=\int\limits_{0}^{A}\int\limits_{0}^{S}\beta(a,\hat{s},s)\exp\left(-\int\limits_{0}^{a}\mu_1(r)dr\right)d\hat{s}da$$	 the reproductive number. 
		\begin{enumerate}
			\item If $\|\mathcal{R}\|_{L^{\infty}(0,S)}<1$ and $u\geq 0 \hbox{ a.e. in }\overline{Q},$ then there exists a unique non-negative solution to $(\ref{www}).$ Moreover, there exists four constants $\rho_0>0$, $a^*\in (0,A)$, $s_{1}^{*}\hbox{, }s_{2}^{*}\in (0,S)\hbox{ with }s_{1}^{*}<s_{2}^{*}$ such that $$p_s(x,a,s)>\rho_0\hbox{ a.e. in }\Omega \times [0,a^*]\times [s_{1}^{*},s_{2}^{*}].$$
			\item If $\mathcal{R}(s)=1\text{ and }u\equiv 0$ then there exist infinitely many solutions to $(\ref{www})$, which satisfy $(\ref{222}).$
			%\item If $\mathcal{R}(s)>1\text{ in }(0,S)$ then there is no nonnegative solution to $(\ref{www}),$ satisfying $(\ref{2}).$
		\end{enumerate}
	\end{proposition}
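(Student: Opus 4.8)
The plan is to solve $(\ref{www})$ by the method of characteristics, reducing the transport--diffusion problem to a renewal (Volterra-type) integral equation for the trace $b(x,s):=p(x,0,s)$, and then to invert the resulting fixed-point operator using the smallness of $\mathcal R$. Concretely, I would integrate $(\ref{www})$ along the characteristic field $(1,1)$ in the $(a,s)$-plane exactly as in Proposition~2.2, but now with $\mu_2\equiv0$, so that the transport semigroup reduces to $\pi_1(a)e^{a\Delta}$, where $\pi_1(a)=\exp\!\big(-\int_0^a\mu_1(r)\,dr\big)$ and $e^{t\Delta}$ is the Neumann heat semigroup on $\Omega$. The two inflow boundaries are $\{a=0\}$, carrying the renewal datum $b$, and $\{s=0\}$, carrying the datum $0$, with no outflow condition. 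For $a<s$ this gives
\[
p(x,a,s)=\pi_1(a)\,e^{a\Delta}b(x,s-a)+\int_0^a\frac{\pi_1(a)}{\pi_1(\sigma)}\,e^{(a-\sigma)\Delta}u(x,\sigma,s-a+\sigma)\,d\sigma,
\]
and for $a>s$ the same formula with the $b$-term replaced by $0$ and the integral issuing from $\{s=0\}$.

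Substituting these representations into the renewal condition $b(x,s)=\int_0^A\int_0^S\beta(a,\hat s,s)p(x,a,\hat s)\,d\hat s\,da$, and using that $(H_3)$ annihilates $\beta$ for $a<\hat a$, only the branch $a<\hat s$ contributes a $b$-dependence, yielding an affine equation $b=\mathcal F b+g$, where $g$ collects the contributions of $u$ and
\[
(\mathcal F b)(x,s)=\int\!\!\int_{\{a<\hat s\}}\beta(a,\hat s,s)\,\pi_1(a)\,e^{a\Delta}b(x,\hat s-a)\,d\hat s\,da.
\]
Since $e^{a\Delta}$ is a positivity-preserving $L^\infty$-contraction (Neumann), I obtain $\|\mathcal F b\|_{L^\infty}\le\sup_s\int\!\int_{a<\hat s}\beta\,\pi_1\,d\hat s\,da\;\|b\|_{L^\infty}\le\|\mathcal R\|_{L^\infty(0,S)}\|b\|_{L^\infty}$. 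Under $\|\mathcal R\|_\infty<1$ this makes $\mathcal F$ a contraction, so $I-\mathcal F$ is invertible by Neumann series, $b=\sum_{n\ge0}\mathcal F^ng$ is the unique fixed point, and reinserting it into the characteristic formula gives the unique $p\in L^\infty(\overline Q)$. Non-negativity is then immediate: $g\ge0$ (as $u,\beta,\pi_1\ge0$ and $e^{t\Delta}$ preserves positivity) and $\mathcal F$ preserves non-negativity, so every term of the series is non-negative, whence $b\ge0$ and $p\ge0$.

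For the quantitative lower bound I would exploit the strict positivity of the Neumann heat semigroup. Assuming, as one may, that $u$ does not vanish identically (for $u\equiv0$ the unique solution is $p\equiv0$, so the strict bound forces $u$ to be positive on a set of positive measure), the source $g$ becomes strictly positive in $x$ for $s$ in a sub-band, and $e^{a\Delta}$ spreads this into a uniform lower bound on all of $\Omega$. Choosing a compact age interval $[0,a^*]$ and a size band $[s_1^*,s_2^*]$ on which this propagated bound survives, together with $\pi_1(a)\ge\pi_1(a^*)>0$, produces the constant $\rho_0>0$ with $p(x,a,s)>\rho_0$ there. The delicate point is the spreading/irreducibility argument guaranteeing that the lower bound does not degenerate at the endpoints of the band; this is exactly where the smoothing of $e^{t\Delta}$ and the continuity of $\beta$ from $(H_2)$ are used.

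Finally, in the critical case $\mathcal R\equiv1$ with $u\equiv0$ the operator $\mathcal F$ has spectral radius $1$. Restricting to $x$-independent traces reduces $\mathcal F$ to a positive integral operator on $L^\infty(0,S)$ with Perron eigenvalue $1$; a Krein--Rutman / Perron--Frobenius argument then furnishes a non-negative eigenfunction $b^*\not\equiv0$ with $b^*=\mathcal F b^*$. The associated $p^*$ built from the characteristic formula solves the homogeneous $(\ref{www})$, and the whole ray $\{c\,p^*:c\ge0\}$ consists of distinct non-negative steady states, each a stationary solution of $(\ref{222})$, which gives the claimed infinitude. The main obstacle overall is the strict-positivity estimate of Part~1; once the characteristic reduction is in place, existence, uniqueness, and the degenerate case reduce to standard contraction and Krein--Rutman arguments.
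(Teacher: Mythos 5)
Your existence--uniqueness argument is in substance the paper's: the paper also runs a Banach fixed point with contraction constant $\|\mathcal R\|_{L^{\infty}(0,S)}$, obtained from the $L^{\infty}$-contractivity of the Neumann heat semigroup (its Lemma 4.1); the only difference is that the paper iterates the map $\eta\mapsto q_{\eta}$ on the full state in $L^{\infty}(\overline Q)$ (freezing the renewal term), while you iterate on the boundary trace $b=p(\cdot,0,\cdot)$. These are equivalent, and your non-negativity argument via positivity of every term of the Neumann series is a fine substitute for the paper's appeal to the maximum principle.

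The two remaining items are where you diverge, and in both cases you leave a genuine gap. For the lower bound $p_s>\rho_0$ you explicitly flag the "spreading/irreducibility" step as the delicate point and do not carry it out; the paper closes this by a much more concrete device: it compares $p_s$ from below with the solution $p_i$ of the \emph{time-dependent} problem with zero initial data and a constant source $u_0$, observes that $p_i$ is independent of $x$ (zero Neumann data and $x$-independent source), computes $p_i$ explicitly along characteristics, and shows $p_i(0,s,t)>0$ for $t>\max\{A,S\}$, from which continuity yields $\rho_0$ on a compact box $[0,a^*]\times[s_1^*,s_2^*]$. You would need to supply an argument of comparable substance; "the heat semigroup spreads positivity" does not by itself produce a \emph{uniform} lower bound on the band, nor does it address the degeneration as $s\to 0$ forced by the boundary condition $p(x,a,0)=0$. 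For Part 2, your Krein--Rutman route asserts that $\mathcal F$ has spectral radius exactly $1$, but your own estimate only gives $\|\mathcal F\|\le\|\mathcal R\|_{\infty}=1$; Krein--Rutman would return the spectral radius as an eigenvalue, which could be strictly less than $1$, so you have not actually produced a nontrivial fixed point. The paper avoids this entirely by exhibiting the explicit one-parameter family $p_s(x,a,s)=H\exp\left(-\int_0^a\mu(e)\,de\right)$, for which the renewal condition reduces, via the pointwise hypothesis $\mathcal R(s)=1$ for every $s$, to the identity $H=H\mathcal R(s)$. Note that the hypothesis is pointwise equality of $\mathcal R$, not a statement about a spectral radius, and the elementary explicit construction is both shorter and actually complete.
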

	For the proof of the  previous Proposition we need this result:
	\begin{lemma}
		The operator Laplacian verify the following inequality:
		\[\|e^{t\Delta}\psi\|_{L^{\infty}(\Omega)}\leq \|\psi\|_{L^{\infty}(\Omega)}\hbox{ for all } t\geq 0, \hbox{ } \psi\in L^{\infty}(\Omega).\]
	\end{lemma}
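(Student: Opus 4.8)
The plan is to identify $w(x,t):=(e^{t\Delta}\psi)(x)$ with the solution of the heat equation under the homogeneous Neumann boundary conditions appearing in the definition of $\mathcal{A}$, and then to run a comparison argument. First I would record the two structural properties of the Neumann heat semigroup that drive everything: it is \emph{positivity preserving}, i.e. $\psi\ge 0$ a.e. implies $e^{t\Delta}\psi\ge 0$ a.e., and it \emph{preserves constants}, i.e. $e^{t\Delta}\1=\1$ for every $t\ge 0$. The second property is immediate, since a constant function solves $\partial_t w-\Delta w=0$ with $\partial w/\partial\nu=0$ on $\partial\Omega$ and hence coincides with its own semigroup evolution. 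The first is the parabolic maximum principle: for smooth data $w$ solves the heat equation with the Neumann condition, and Hopf's lemma precludes the minimum over the space-time cylinder from being attained on the lateral boundary (unless $w$ is constant), so the minimum is taken at $t=0$ and a nonnegative initial datum stays nonnegative. The general $L^\infty$ case then follows by approximating $\psi$ in $L^2(\Omega)$ and using the $L^2$-continuity of $e^{t\Delta}$ together with a.e. convergence along a subsequence.

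Granting these two facts, the contraction estimate is a short comparison. Set $M:=\|\psi\|_{L^\infty(\Omega)}$, so that $M-\psi\ge 0$ and $M+\psi\ge 0$ a.e. in $\Omega$. Applying positivity preservation to each, and using linearity together with $e^{t\Delta}\1=\1$, gives
\[
e^{t\Delta}\psi\le M\1\quad\text{and}\quad e^{t\Delta}\psi\ge -M\1\qquad\text{a.e. in }\Omega,
\]
for every $t\ge 0$. Combining the two inequalities yields $|e^{t\Delta}\psi|\le M$ a.e., which is exactly $\|e^{t\Delta}\psi\|_{L^\infty(\Omega)}\le\|\psi\|_{L^\infty(\Omega)}$.

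An equivalent route, which I would mention as an alternative, is to use the integral kernel: the Neumann heat semigroup is represented by a nonnegative kernel $p_t(x,y)\ge 0$ satisfying $\int_\Omega p_t(x,y)\,dy=1$ (again because constants are preserved), whence
\[
|(e^{t\Delta}\psi)(x)|\le\int_\Omega p_t(x,y)\,|\psi(y)|\,dy\le\|\psi\|_{L^\infty(\Omega)}\int_\Omega p_t(x,y)\,dy=\|\psi\|_{L^\infty(\Omega)}.
\]
Either way the argument is elementary once positivity is in hand. The only genuinely technical point is the justification of positivity (and of the kernel representation, or of the conservation of constants) for merely bounded data rather than smooth data; this is the step I would treat with the usual density-plus-maximum-principle argument, and it is the only place where the boundedness and regularity of $\Omega$ enter.
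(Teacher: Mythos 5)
You should know that the paper does not actually prove this lemma: its entire ``proof'' is the sentence referring the reader to Ouhabaz \cite[Corollary 4.10]{qu}, where the $L^{\infty}$-contractivity of the Neumann heat semigroup is obtained through the theory of sesquilinear forms and the Beurling--Deny--Ouhabaz invariance criteria for closed convex sets. Your argument is therefore a genuinely different, self-contained route, and it is correct: positivity preservation together with conservation of constants gives the bound at once by applying the semigroup to $M\pm\psi\ge 0$ with $M=\|\psi\|_{L^{\infty}(\Omega)}$, and the kernel formulation is just a restatement of the same two facts ($p_t\ge 0$ is positivity, $\int_{\Omega}p_t(x,y)\,dy=1$ is conservation of constants). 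The comparison is this: the form-theoretic route in the cited reference requires nothing of $\partial\Omega$ beyond what is needed to define the Neumann form on $H^1(\Omega)$, and it extends verbatim to divergence-form operators with measurable coefficients; your route is more elementary, but the positivity step as you set it up, via the classical parabolic maximum principle and Hopf's lemma, quietly uses boundary regularity (an interior ball condition) that the paper never assumes, since $\Omega$ is only taken to be open and bounded. If you want the lemma at that level of generality, replace the Hopf-lemma argument by the first Beurling--Deny criterion (if $u\in H^1(\Omega)$ then $u^{+}\in H^1(\Omega)$ and $\int_{\Omega}\lvert\nabla u^{+}\rvert^2\,dx\le\int_{\Omega}\lvert\nabla u\rvert^2\,dx$), which yields positivity of the semigroup with no smoothness of the boundary; with that substitution, and your density argument to pass from smooth to merely bounded data, the proof is complete. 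Everything else in your write-up (linearity, the two-sided comparison, the kernel alternative) is sound.
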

	For the proof of this result we refer to Ouhabaz \cite[\hbox{ Corollary } 4.10]{qu},
	\begin{proof}{Proof of the Proposition 4.1}\\
		Let $\eta\in L^{2}(\overline{Q})$ and $q=p\exp\left(\int\limits_{0}^{a}\mu_1(r)dr\right).$
		The function $q$ solve:
		\begin{equation}
			\left\lbrace
			\begin{array}{ll}
				\dfrac{\partial q}{\partial a}+\dfrac{\partial q}{\partial s}-\Delta q=u &\text{ in }\overline{Q} ,\\ 
				\dfrac{\partial q}{\partial\nu}=0&\text{ on }\overline{\Sigma},\\ 
				q\left( x,0,s\right) =\int\limits_{0}^{A}\int\limits_{0}^{S}\beta(a,\hat{s},s)\exp\left(-\int\limits_{0}^{a}\mu_1(r)dr\right)q(x,a,\hat{s})da d\hat{s}&\text{ in } \overline{Q}_S \\
				q\left(x,a,0\right)=0&
				\text{ in }\overline{Q}_A;\\
			\end{array}
			\right.
		\end{equation} 
		We denote by $q_{\eta}$ the solution of the following system
		\begin{equation}
			\left\lbrace 
			\begin{array}{ll}
				\dfrac{\partial q}{\partial a}+\dfrac{\partial  q}{\partial s}-\Delta  q=u &\text{ in }\overline{Q} ,\\ 
				\dfrac{\partial  q}{\partial\nu}=0&\text{ on }\overline{\Sigma},\\ 
				q\left( x,0,s\right) =\int\limits_{0}^{A}\int\limits_{0}^{S}\beta(a,\hat{s},s)\exp\left(-\int\limits_{0}^{a}\mu_1(r)dr\right)\eta(x,a,\hat{s})  da d\hat{s}&\text{ in } \overline{Q}_S \\
				q\left(x,a,0\right)=0&
				\text{ in }\overline{Q}_A;\\
			\end{array}
			\right.\label{cg}
		\end{equation}
		The system (\ref{cg}) can be considered as the Lotka-McKendrick system with spatial diffusion, but the renewal term not depend of the state of system. Therefore, if $u\geq 0$ and $\eta\geq 0,$ we have a non-negative solution.
		Let $\Lambda$ the operator define by:
		\[L^{\infty}(\overline{Q})\longrightarrow L^{\infty}(\overline{Q}):\eta\longmapsto\Lambda(\eta)=q_{\eta};\] where %$$L^{\infty}(\overline{Q})=\{u\geq 0\hbox{ a.e. in }\overline{Q}\hbox{, and }u\in L^{\infty}(\overline{Q})\}$$ 
		We denote by $\hat{\eta}=\eta-\eta'$ and $q_{\eta}-q_{\eta'}=q_{\hat{\eta}},$ where $(\eta,\eta')\in \left(L^{\infty}(\overline{Q})\right)^2$.   The function $q_{\hat{\eta}}$ is the solution of (\ref{cg}) where $\eta$ is replaced by $\hat{\eta}$ and $u=0.$\\
		Using the characteristic method, we obtain:
		\begin{equation}
			q_{\hat{\eta}}(x,a,s)=	
			\begin{cases}
				\int\limits_{0}^{A}\int\limits_{0}^{S}\beta(\alpha,\hat{s},s-a)\exp\left(-\int\limits_{0}^{\alpha}\mu(r)dr\right)e^{(s-a)\Delta}\hat{\eta}(x,\alpha,\hat{s})d\hat{s}d\alpha\quad if \quad 0<a<s\\
				0 \quad if\quad 0<s<a.
			\end{cases}
		\end{equation}
		From result of the Lemma 4.1, we obtain
		\[\displaystyle|q_{\hat{\eta}}|\leq \int\limits_{0}^{A}\int\limits_{0}^{S}\beta(\alpha,\hat{s},s-a)\exp\left(-\int_{0}^{\alpha}\mu_1(r)dr\right)d\alpha d\hat{s}\|\hat{\eta}\|_{L^{\infty}(\overline{Q})}\]
		Then
		\[\displaystyle\|q_{\hat{\eta}}\|_{L^{\infty}(\overline{Q})}\leq \displaystyle \|\mathcal{R}\|_{L^\infty(0,S)}\|\hat{\eta}\|_{L^{\infty}(\overline{Q})}.\]
		Finally, if $\|\mathcal{R}\|_{L^{\infty}(0,S)}<1,$ the operator $\Lambda$ is a contracting operator. By the Banach fixed point $\Lambda$ admits a fixed point $\hat{q}.$ By the maximum principle, we conclude that $\hat{q}$ is non-negative. \\ 
		Now, we denote by $p_s(x,a,s)$ the unique non negative solution associated to the control $u$ as in the proposition 4.1. By the comparison method, (the positivity result of \cite{b9} give us the comparison principle) we get that:
		\[p_{s}(x,a,s)\geq p_{i}(x,a,s,t)\quad a.e \hbox{ } (x,a,s,t)\in Q,\]
		where $p_{i}$ solve 
		\begin{equation}
			\left\lbrace
			\begin{array}{ll}
				\dfrac{\partial p_{i}}{\partial t}+\dfrac{\partial p_{i}}{\partial a}+\dfrac{\partial p_{i}}{\partial s}-\Delta  p_{i}+\mu_1(a)p_{i}=u_0 &\text{ in }Q ,\\ 
				\dfrac{\partial p_{i}}{\partial\nu}=0&\text{ on }\Sigma,\\ 
				p_{i}\left( x,0,s,t\right) =\int\limits_{0}^{A}\int\limits_{0}^{S}\beta(a,\hat{s},s)p_{i}(x,a,\hat{s},t)da d\hat{s}&\text{ in } Q_{S,T} \\
				p_{i}\left(x,a,0,t\right)=0&
				\text{ in }Q_{A,T};\\
				p_{i}(x,a,s,0)=0&\text{ in }Q_{A,S}
			\end{array}
			\right..
		\end{equation}
		As the boundary condition in space and  initial condition are zero, then the function $p_i$ doesn't explicitly depend on $x.$ So, we will write $p_i(a,s,t)$ instead of $p_i(x,a,s,t)$ and 
		\[p_s(x,a,s)\geq  p_i(a,s,t)\hbox{ for all } t\geq 0\hbox{ a.e } (x,a,s)\in \overline{Q}.\]
		where $p_i(a,s,t)$ verifies:
		\begin{equation}
			\left\lbrace 
			\begin{array}{ll}
				\dfrac{\partial p_i}{\partial t}+\dfrac{\partial p_i}{\partial a} +\dfrac{\partial p_i}{\partial s}+\mu_1(a)p_i=u_0 &\text{ in }(0,A)\times (0,S)\times (0,T) ,\\  
				p_i\left( 0,s,t\right) =\int\limits_{0}^{A}\int\limits_{0}^{S}\beta(a,\hat{s},s)p_i(a,\hat{s},t)da d\hat{s}&\text{ in } (0,S)\times (0,T) \\
				p_i\left(a,0,t\right)=0&
				\text{ in }\ (0,A)\times (0,S);\\
				p_i\left(a,s,0\right)=0&
				\text{ in }\ (0,A)\times (0,S)\\
			\end{array}
			\right..\label{pre}
		\end{equation}
		The solution $p_i$ of $(\ref{pre})$ is given by:
		\begin{align*}
			p_i(a,s,t)=\left\lbrace
			\begin{array}{l}
				\int\limits_{0}^{t}\dfrac{u_0\pi(a)}{\pi(a-t+l)}dl\quad if \quad 0<t<a\quad 0<t<s\\
				\pi(a)p_i(0,s-a,t-a)+\int\limits_{t-a}^{t}\dfrac{u_0\pi(a)}{\pi(a-t+l)}dl\hbox{ if }0<a<t\hbox{ and }  0<a<s\\
				\int\limits_{t-s}^{t}\dfrac{u_0\pi(a)}{\pi(a-t+l)}dl	 \hbox{ if } 0<s<t<a\hbox{ or }  0<s<a<t.
			\end{array}
			\right.
		\end{align*}
		We have (from the representation $(\ref{semi1})$)
		\[p_i(0,s,t)=\int\limits_{0}^{t}\int\limits_{a}^{S}\beta(a,\hat{s},s)\pi(a)p_i(0,\hat{s}-a,t-a)d\hat{s}da+L(s,t)\] with \[L(s,t)=u_0\int\limits_{0}^{t}\int\limits_{a}^{S}\beta(a,\hat{s},s)\left(\int\limits_{t-a}^{t}\dfrac{\pi(a)}{\pi(a-t-l)}dl\right)d\hat{s}da+\]\[u_0\int\limits_{0}^{A}\int\limits_{0}^{a}\beta(a,\hat{s},s)\left(\int\limits_{t-s}^{t}\dfrac{\pi(a)}{\pi(a-t-l)}dl\right)d\hat{s}da+u_0\int\limits_{t}^{A}\int\limits_{t}^{S}\beta(a,\hat{s},s)\left(\int\limits_{0}^{t}\dfrac{\pi(a)}{\pi(a-t-l)}dl\right)d\hat{s}da.\] For $t>A\hbox{ and } t>S,$ the function $L(s,t)> 0,$ therefore, we have $p_i(0,s,t)>0$
		and $p_i(0,s,t)$ is continuous to respect to $t\text{ and } s$ (see \cite{b9}).\\
		Moreover, $$p_i(a,0,t)=0\hbox{ but }p_i(a,s,t)>0\hbox{ if }s\in (0,S)\hbox{ and }\lim_{a\longrightarrow A}p_i(a,s,t)=0.$$
		As consequence we obtain that there exists $\rho_0(a^*,s_{1}^{*},s_{2}^{*})>0$ such that, for $t$ large enough, and for any $(a,s)\in[0,a^*]\times[s_{1}^{*},s_{2}^{*}],$
		\[p_i(a,s,t)>\rho_0,\]
		and in conclusion we get the result.\\
		If $\mathcal{R}(s)=1\text{ a.e. in }(0,S)\text{ and }u\equiv 0,$ then any function defined by
		$$p_s(x,a,s)=H\exp\left(-\int\limits_{0}^{a}\mu(e)de\right)\text{, }(x,a,s)\in\overline{Q}$$ is a solution of $(\ref{www})$ (for any $H\in \mathbb{R}$). In fact these are all the solutions to $(\ref{www})$ in this case. Therefore there exist infinitely many solutions to $(\ref{www})$, which satisfy $(\ref{222})$.\\
	\end{proof}
	\subsection{Non existence of steady solution}
	In this subsection, we suppose that $\mu(a,s)=\mu_1(a)$ as in the system $(\ref{222})$ and we suppose that $\beta $ does not depend of the size of individuals.\\
	We denote by $$\gamma(a)=\int\limits_{0}^{S}\beta(a,s)ds.$$
	\begin{proposition}
		Denote by $$\mathcal{R}=\int\limits_{0}^{A}\gamma(a)\exp\left(-\int\limits_{0}^{a}\mu_1(r)dr\right)da=\int\limits_{0}^{A}\int\limits_{0}^{S}\beta(a,s)\exp\left(-\int\limits_{0}^{a}\mu_1(r)dr\right)dsda$$	 the reproductive number.\\ 
		\begin{itemize}
			\item Under the assumptions of $\beta\hbox{ and }\mu_1,$ if $u\geq 0\hbox{ a.e. in }\overline{Q}\times(0,+\infty)$ the solution $y$ of the system $(\ref{222})$ verifies \[y(x,a,s,t)\geq 0\hbox{ a.e. in }\overline{Q}\times(0,+\infty).\]
			\item Moreover, if 
			%the solution $y$ of the system $(\ref{222})$ verifies $$y(x,a,s,t)=0\hbox{ a.e. in }\Omega\times(0,A)\times(S,+\infty)\times(0,+\infty)$$ and if 
			$\mathcal{R}>1$ then $$\lim_{t\longrightarrow+\infty}\|y(t)\|_{L^{2}(\overline{Q})}=+\infty$$ and
			there is no non-negative solution to $(\ref{www}),$ satisfying $(\ref{222}).$
		\end{itemize}
	\end{proposition}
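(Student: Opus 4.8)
The plan is to treat the two bullets separately, obtaining the non-existence of a steady state as a consequence of the blow-up through a comparison argument. First I would dispatch the positivity statement. Since the initial datum in $(\ref{222})$ is zero, the control satisfies $u\ge 0$, and the renewal kernel satisfies $\beta\ge 0$ by $(H_2)$, the solution $y$ is assembled from non-negative data only. Concretely, I would invoke the comparison (maximum) principle already used in the proof of Proposition 4.1 — the positivity result of \cite{b9} — applied to $(\ref{222})$: the mild solution is the monotone limit of the Picard iteration $y^{(0)}=0$, with $y^{(n+1)}$ obtained by solving the linear problem whose renewal term is fed by $y^{(n)}$, and each step preserves the non-negative cone because $u\ge 0$ and $\beta\ge 0$. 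Passing to the limit gives $y\ge 0$ a.e. in $\overline Q\times(0,+\infty)$.

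For the blow-up I would reduce $(\ref{222})$ to a classical renewal (Lotka--McKendrick) equation. Integrating the first equation of $(\ref{222})$ over $\Omega\times(0,S)$ and using the Neumann condition (so that $\int_\Omega\Delta y\,dx=0$) together with $y(x,a,0,t)=0$, the size-integrated, spatially averaged profile $P(a,t)=\int_\Omega\int_0^S y\,ds\,dx$ obeys a McKendrick transport equation
\[
\partial_t P+\partial_a P+\mu_1(a)P = U(a,t)-F(a,t),\qquad P(0,t)=\int_0^A\gamma(a)P(a,t)\,da,
\]
where $U=\int_\Omega\int_0^S u\ge 0$ and $F(a,t)=\int_\Omega y(x,a,S,t)\,dx\ge 0$ is the outflow through $s=S$; here I used that $\beta$ is independent of the individuals' size, so that $\int_0^S\!\big(\int_0^S\beta(a,\hat s,s)\,ds\big)y\,d\hat s=\gamma(a)\int_0^S y\,d\hat s$ collapses the renewal term to $\int_0^A\gamma(a)P\,da$. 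Dropping $U\ge 0$ yields only an upper bound, so for the lower bound I would instead compare $P$ from below with the solution of the same transport--renewal system in which the loss $F$ is discarded on the sub-region of sizes that never reach $s=S$ within the reproductive age window. On that reduced system classical renewal theory applies: the Malthusian parameter $\lambda^\ast$, the unique real root of the Lotka equation $\int_0^A\gamma(a)e^{-\lambda a}\pi_1(a)\,da=1$ with $\pi_1(a)=\exp(-\int_0^a\mu_1(r)\,dr)$, satisfies $\lambda^\ast>0$ precisely because the left-hand side is strictly decreasing in $\lambda$ and equals $\mathcal R>1$ at $\lambda=0$. Hence $P$, and therefore $\|y(t)\|_{L^2(\overline Q)}$, grows at least like $e^{\lambda^\ast t}$, giving $\lim_{t\to\infty}\|y(t)\|_{L^2(\overline Q)}=+\infty$.

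The main obstacle is exactly the outflow term $F$: unlike the pure age model, individuals are lost through $s=S$, and a priori this could defeat a supercritical reproductive number. I would control it by tracking only the cohorts born at small sizes, whose characteristics $\tau\mapsto s_0+\tau$ stay below $S$ throughout the fertile age range (using $(H_3)$, which confines reproduction to $a\ge\hat a$, and choosing the size window so that $S-s_0$ exceeds the relevant fertile ages); on this sub-population the loss $F$ is absent and the effective reproduction number still exceeds one, so the sub-solution grows. Finally, the non-existence of a non-negative steady state follows from the blow-up by comparison: if some bounded $p\ge 0$ solved $(\ref{www})$ and were a stationary solution of $(\ref{222})$, then since $y_0=0\le p$ the monotonicity of the flow of $(\ref{222})$ would force $0\le y(t)\le p$ for all $t$, whence $\|y(t)\|_{L^2(\overline Q)}\le\|p\|_{L^2(\overline Q)}<\infty$, contradicting the blow-up just established. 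Therefore no such steady state can exist.
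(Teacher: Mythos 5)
Your treatment of the first bullet (positivity via the monotone Picard iteration on the renewal term) is essentially the paper's argument, which invokes the Banach fixed point construction together with the comparison principle of \cite{B1}, so that part is fine. The genuine gap is in the blow-up argument, and it sits exactly at the point you flag yourself: the claim that, after discarding the cohorts whose size characteristics exit through $s=S$ during the fertile age window, the ``effective reproduction number still exceeds one.'' Nothing in the hypotheses guarantees this. Retaining only newborns of size $s_0$ with $s_0+a<S$ throughout the fertile ages $a$ replaces $\mathcal R=\int_0^A\int_0^S\beta(a,s)\pi_1(a)\,ds\,da$ by at most $\int_0^A\int_0^{S-A}\beta(a,s)\pi_1(a)\,ds\,da$, and this truncated quantity can be arbitrarily small even when $\mathcal R$ is large: if $\beta(a,\cdot)$ is concentrated on newborn sizes $s$ close to $S$ with $S-s<\hat a$, every newborn leaves the size domain before reaching the fertile age $\hat a$, no sub-population with supercritical reproduction survives, and your sub-solution does not grow. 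So the step ``the sub-solution grows'' cannot be completed from the stated hypotheses; the outflow term $F$ you correctly isolate is not merely a technical nuisance but can genuinely destroy supercriticality.

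For comparison, the paper does not confront $F$ at all: it integrates the density over $(0,S_1)$ with $S_1>S$, asserts that $y\equiv 0$ on $\Omega\times(0,A)\times(S,S_1)\times(0,+\infty)$, and thereby obtains for $Y=\int_0^{S_1}y\,ds$ an exact Lotka--McKendrick system with diffusion and no loss term, to which the classical supercritical growth result is applied; non-existence of a steady state is then deduced by evaluating the constant-in-time trajectory issued from $p_s$. That reduction silently assumes that no individual ever reaches the maximal size $S$ --- which would follow from $(h_1)$ (namely $\int_0^S\mu_2(s)\,ds=+\infty$) but is switched off in this subsection where $\mu_2\equiv 0$. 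Your derivation of the reduced equation $\partial_tP+\partial_aP+\mu_1P=U-F$ is the honest version of the same computation, and your comparison argument for non-existence ($0\le y(t)\le p_s$ forces boundedness, contradicting blow-up) is cleaner than the paper's appeal to non-exponential stability; but as it stands neither your argument nor the paper's closes the gap created by the outflow at $s=S$ under the hypothesis $\mathcal R>1$ alone.
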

	\begin{proof}
		An idea of Banach fixed point, allows us to show the existence and the unicity of a non-negative solution of $(\ref{222})$ and by the same token, we have the comparison principle ( see \cite{B1} ).\\We now turn to the second part of the proof; for sequel let $y$ verifies $(\ref{222})$
		where the fertility rate $\beta$ is independent of the size of the individuals.\\ We denote by $$Y=\int\limits_{0}^{S_1}yds \hbox{ where } S_1>S.$$ \\
		We can proof that $Y$ verifies
		\begin{equation}
			\left\lbrace 
			\begin{array}{ll}
				Y_t+Y_a-\Delta  Y+\mu_1(a)Y=U &\text{ in }\Omega\times(0,A)\times(0,+\infty) ,\\ 
				\dfrac{\partial  Y}{\partial\nu}=0&\text{ on }\partial\Omega\times(0,A)\times(0,+\infty),\\ 
				Y\left( x,0,t\right) =\int\limits_{0}^{A}\gamma(a)Yda&\text{ in } \Omega\times(0,+\infty) \\
				Y\left(x,a,0\right)=0&
				\text{ in }\Omega\times(0,+\infty);\\
			\end{array}
			\right.\label{cgg}
		\end{equation}
		where $U=\int\limits_{0}^{S}uds$ (here, $u=0\hbox{ if }s>S$).\\
		As $$ u\in L^{\infty}(\overline{Q}\times(0,+\infty))\hbox{ and }u\geq 0\hbox{ a.e. in }\overline{Q}\times(0,+\infty)\hbox{ then }$$ 
		$$U\in L^{\infty}(\Omega\times(0,A)\times(0,+\infty))\hbox{ and }U\geq 0\hbox{ a.e. in }\Omega\times(0,A)\times(0,+\infty).$$ Moreover $$\gamma(a)\in L^{\infty}(0,A)\hbox{ and }\gamma\geq 0\hbox{ in }(0,A).$$
		The system $(\ref{cgg})$ is a Lotka-Mckendrix type with diffusion; therefore, if $$\int\limits_{0}^{A}\gamma(a)\exp\left(-\int_{0}^{a}\mu_(s)ds\right)>1,\hbox{ }\|Y(t)\|_{L^{2}(\Omega\times(0,A))}\longrightarrow +\infty\hbox{ as }t\longrightarrow+\infty.$$
		That mean that
		\[\left\|\left(\int\limits_{0}^{S_1}y(t)ds\right)\right\|^{2}_{L^{2}(\Omega\times(0,A))}\longrightarrow +\infty\hbox{ as }t\longrightarrow+\infty\]
		By Cauchy Schwartz inequality, we obtain 
		\[\left\|\left(\int\limits_{0}^{S_1}y(t)ds\right)\right\|^{2}_{L^{2}(\Omega\times(0,A))}\leq S_1\left\|y(t)\right\|^{2}_{L^{2}(\Omega\times(0,A)\times(0,S_1))}\]
		As $$y(x,a,s,t)=0\hbox{ a.e. in }\Omega\times(0,A)\times(S,S_1)\times(0,+\infty)$$
		then
		\[\left\|\left(\int\limits_{0}^{S_1}y(t)ds\right)\right\|^{2}_{L^{2}(\Omega\times(0,A))}\leq\left\|y(t)\right\|^{2}_{L^{2}(\overline{Q})} \longrightarrow +\infty\hbox{ as }t\longrightarrow+\infty\]
		%$$\left\|y(t)\right\|^{2}_{L^{2}(\overline{Q})}$$
		%\[\left\|\left(\int\limits_{0}^{S}\exp\left(\int\limits_{0}^{s}\mu_2(a)da\right)y(t)ds\right)\right\|_{L^{2}(\Omega\times(0,A))}\longrightarrow +\infty\hbox{ as }t\longrightarrow+\infty.\]
		Therefore, \[\|y(t)\|_{L^2(\overline{Q})}\longrightarrow +\infty\hbox{ as }t\longrightarrow+\infty.\]
		Now, suppose that $\mathcal{R}>1$ and there exists a non-negative solution $p_s$ to $(\ref{www})$ satisfying $(\ref{222});$ that mean that there exists $p(x,a,s,t)$ verifying $(\ref{2})$ with the initial condition $y_0=p_s$ and $$p(x,a,s,t)=p_s(x,a,s)\text{ on }Q .$$ As $\mathcal{R}>1\text{ a.e. in }(0,S),$ the system $(\ref{222})$ is not exponentially stable and we would get  
		$$\|p_s\|_{L^2(\overline{Q})}=\|p(t)\|_{L^2(\overline{Q})}\longrightarrow +\infty\text{ as }t\longrightarrow +\infty,$$ which is absurd.\\
	\end{proof}
	\begin{proposition}	
		There exists a constant $C>0$ such that the solution of $(\ref{2})$ satisfies
		\begin{equation}\displaystyle\|y\|_{L^{\infty}(Q)}\leq C\left(\displaystyle\|u\|_{L^{\infty}(Q)}+\displaystyle\|y_{0}\|_{L^{\infty}(\overline{Q})}\right)\label{ertt}.\end{equation}
		for every $u\in L^{\infty}(Q)$ and $y_0\in L^{\infty}(\overline{Q})$
	\end{proposition}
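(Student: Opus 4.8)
The plan is to combine the representation of the solution by characteristics with the $L^{\infty}$--contractivity of the heat semigroup recorded in Lemma 4.1. First I would observe that, since $m$ is a characteristic function, $\|mu\|_{L^{\infty}(Q)}\leq\|u\|_{L^{\infty}(Q)}$, and that along a characteristic the governing equation reads $w'(\lambda)=(\Delta-\mu_1(a-\lambda)-\mu_2(s-\lambda))w$, where the multiplier $\mu_1+\mu_2$ is $x$-independent and nonnegative. Hence it commutes with $\Delta$, so $X(\tau)\psi=\exp\!\big(-\!\int_0^{\tau}(\mu_1+\mu_2)\big)\,e^{\tau\Delta}\psi$ with survival factor $\leq 1$; together with Lemma 4.1 this makes the transport--diffusion--mortality semigroup an $L^{\infty}$ contraction, $\|X(\tau)\psi\|_{L^{\infty}(\Omega)}\leq\|\psi\|_{L^{\infty}(\Omega)}$. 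This is the only place where the diffusion and the sign condition $\mu=\mu_1+\mu_2\geq 0$ enter.

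Second, I would reduce the estimate to a bound on the renewal term $b(x,s,t):=y(x,0,s,t)$. Writing the mild solution by Duhamel along the characteristics as in $(\ref{mop})$, for $0<t<\min\{a,s\}$ one has $y(x,a,s,t)=X(t)y_0(x,a-t,s-t)+\int_0^t X(t-\sigma)(mu)(x,a-t+\sigma,s-t+\sigma,\sigma)\,d\sigma$, while for $0<a<\min\{t,s\}$ the first term is replaced by $X(a)b(x,s-a,t-a)$, and $y\equiv 0$ otherwise. Taking the supremum and using the contraction property yields, for some $C_T>0$,
\[
\|y\|_{L^{\infty}(Q)}\leq \|y_0\|_{L^{\infty}(\overline{Q})}+\sup_{0<t<T}\|b(\cdot,\cdot,t)\|_{L^{\infty}(\Omega\times(0,S))}+C_T\|u\|_{L^{\infty}(Q)}.
\]

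Third, I would close the estimate on $b$ by a Gronwall argument on the Volterra equation $(\ref{vi})$--$(\ref{semi1})$. Inserting the representation above into the renewal condition, $b$ solves $b(\cdot,t)=\int_0^t P(a)b(\cdot,t-a)\,da+g(t)$, where $g$ gathers the propagated initial data and the control and obeys $\|g(t)\|_{L^{\infty}}\leq C(\|y_0\|_{L^{\infty}(\overline{Q})}+\|u\|_{L^{\infty}(Q)})$, and where, by the definition of $P(a)$ and the contractivity of $X$, $\|P(a)f\|_{L^{\infty}}\leq S\|\beta\|_{\infty}\|f\|_{L^{\infty}}$. Setting $\varphi(t):=\sup_{0<\tau\leq t}\|b(\cdot,\tau)\|_{L^{\infty}}$ gives $\varphi(t)\leq S\|\beta\|_{\infty}\int_0^t\varphi(\sigma)\,d\sigma+C(\|y_0\|_{L^{\infty}(\overline{Q})}+\|u\|_{L^{\infty}(Q)})$, and Gronwall's inequality yields $\varphi(T)\leq C\,e^{S\|\beta\|_{\infty}T}(\|y_0\|_{L^{\infty}(\overline{Q})}+\|u\|_{L^{\infty}(Q)})$. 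Substituting into the previous display produces the claimed bound with $C=C(T,\|\beta\|_{\infty},A,S,\Omega)$.

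The step I expect to be the main obstacle is the bookkeeping of the third paragraph: one must verify that the inhomogeneity $g$ genuinely absorbs, uniformly in $t$, both the propagated initial datum $X(t)\!\int\!\int\beta\,y_0$ and the Duhamel contribution of the control, and that the kernel bound $\|P(a)\|\leq S\|\beta\|_{\infty}$ survives the reduction of the $e^{t\Delta}$ factor through Lemma 4.1, keeping the survival factors under control where characteristics cross $a=0$ or $s=0$. Everything else is a routine contraction and Gronwall computation once the $L^{\infty}$--contractivity of $X$ is in hand.
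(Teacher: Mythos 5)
Your proposal is correct and follows essentially the same route as the paper: the representation of the solution along characteristics, the $L^{\infty}$-contractivity of the semigroup obtained from Lemma 4.1 together with the nonnegative survival factor, a Volterra/Gronwall estimate for the renewal term $b$, and a final Duhamel step to absorb the control. Your version is slightly more explicit than the paper's (you carry the control term through the Duhamel formula from the outset and keep $\mu_2$ rather than setting it to zero), but the key ingredients and their order are identical.
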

	\begin{proof}
		Let $L$ be  the operator define by $L=\Delta-\mu_1(a)I,$ we have:\[e^{tL}=\dfrac{\pi_1(a)}{\pi_1(a-t)}e^{t\Delta}\]  is strongly continuous semigroup and verifies the following:
		\[\|e^{tL}\|_{L^{\infty}(\Omega\times(0,A))}\leq  C\|\psi\|_{L^{\infty}(\Omega\times(0,A)
			)}\hbox{ for all } t\geq0 \hbox{ } \psi\in L^{\infty}(\Omega\times(0,A)).\]
		see to Ouhabaz \cite[ \hbox{ Corollary } 4.10]{qu}, for the estimation.\\
		The solution of the system (\ref{2}) without the source term is given by:\\
		\begin{align*}
			y(x,a,s,t)=\left\lbrace
			\begin{array}{l}
				e^{tL}y_0(x,a-t, s-t)\text{ if } 0<t<a\text{ and }  0<t<s,\\
				e^{aL}b_{y_0}(x,s-a,t-a) \text{ if }0<a<t \text{ and } 0<a<s,\\
				0 \text{ otherwise }.
			\end{array},
			\right.
		\end{align*}
		where 
		\begin{equation}
			b_{y_0}(x,s,t)=\displaystyle\int\limits_{0}^{t}\int\limits_{a}^{S}\beta(a,\hat{s},s)e^{aL}b_{y_0}(\hat{s}-a,t-a) d\hat{s}da+\displaystyle\int\limits_{0}^{A-t}\int\limits_{0}^{S-t}e^{tL}\beta(a+t,\hat{s}+t,s)y_0(x,a,\hat{s}) d\hat{s}da.\label{semi}
		\end{equation}
		Let \[V=\{f\in C([0,S], L^{\infty}(\Omega))\},\] and \[W=C([0,T],V)\] where $T>0.$\\ Let
		\begin{equation}
			c_{y_0}(s,t)=\left\lbrace
			\begin{array}{l}
				\displaystyle\int\limits_{0}^{A-t}\int\limits_{0}^{S-t}e^{tL}\beta(a+t,\hat{s}+t,s)y_0(x,a,\hat{s})da d\hat{s}	\text{ if } A>t \text{ and } S>t	\\
				0 \text{ otherwise }
			\end{array}
			\right.
		\end{equation}
		Let $C_{y_0}(t)(s)=c_{y_0}(s,t) \text { if } 0<t<T$ and then $C_{y_0}\in W.$\\
		Define $K:[0,+\infty )\longrightarrow B(V)$ (the space of bounded linear operators in V) by \[(K(a)f)(s)=e^{aL}\int_{a}^{S}\beta(a,\hat{s},s)f(\hat{s}-a)d\hat{s} \text{,  }f\in V \text{ }a\in (0,A)\]
		Then, $K(a)$ is well-defined, and $\beta$ and $f$ are continuous. Equation $(\ref{semi})$ may now be written as an abstract linear Volterra integral equation in $V.$
		\begin{equation}
			B_{y_0}(t)=\int\limits_{0}^{t}K(a)B_{y_0}(t-a)da+C_{y_0}(t)\label{v}
		\end{equation}
		where $B_{\phi}\in W$ and $B_{\phi}=b_{\phi}.$\\
		The equation of Volterra thus defined admits a solution. Moreover
		\begin{equation}
			|B_{y_0}(t)|\leq\int_{0}^{t}|K(a)B_{y_0}(t-a)|da+C\|\beta\|_{\infty}\|y_0\|_{L^\infty(\overline{Q})}.
		\end{equation}
		Using the Gronwall Lemma and the regularity of $\beta$ and $y_0,$ we obtain:
		\[|B_{y_0}(t)|\leq C\|\beta\|_{\infty}\|y_0\|_{L^\infty(\overline{Q})}\exp(\int_{0}^{t}|K(a)|da)\hbox{ for all } t\geq 0\]
		and the  regularity of $K$ gives 
		\[\|B_{y_0}\|_{L^\infty(Q)}\leq C\|\beta\|_{\infty}\exp(CT\|\beta\|_{\infty})\|y_0\|_{L^\infty(\overline{Q})}\hbox{ } C>0\]
		Finally, using the Duhamel formula, we obtain that:
		\[\|y\|_{L^{\infty}(Q)}\leq C\left(\|u\|_{L^{\infty}(Q)}+\|y_{0}\|_{L^{\infty}(\overline{Q})}\right).\]
	\end{proof}
	\subsection{Positivity result}
	So we are in position to state the fourth main result.
	\begin{theorem} 
		Assume the hypothesis of Theorem 1.1. Let $g_s$ and $g_f$ are two non-negative steady states of the system (\ref{2}). Assume that there exist $a_*\in(0,A)\hbox{ and }s_{1}^{*},s_{2}^{*}\in (0,S)\hbox{ with }s_{1}^{*}<s_{2}^{*}$  and $\delta>0$ such that
		\[g_s(x,a,s)\hbox{ and } g_f(x,a,s)\geq\delta\hbox{ a.e on } \Omega\times[0,a^*]\times [s_{1}^{*},s_{2}^{*}].\] Then there exists $T>0$ and $u\in L^{\infty}(Q)$ such that the problem (\ref{2}) with $y_0(x,a,s)=g_s(x,a,s)$ admits a unique solution $y$ satisfying
		\[y(x,a,s,T)=g_f(x,a,s)\hbox{ a.e. }  (x,a,s)\in \overline{Q}\]
		Moreover, \[y(.,a,s,t)\geq 0\hbox{ for a.e. } (x,a,s)\in Q .\]
	\end{theorem}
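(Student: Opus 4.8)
The plan is to realise the connection between $g_s$ and $g_f$ by a \emph{quasi-static deformation} (staircase) argument along a path of non-negative steady states, in the spirit of \cite{b5,dy,dd}, using as building block the null controllability of Theorem 1.1 upgraded to $L^\infty$ controls by means of the a priori bound $(\ref{ertt})$.

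First I would construct the path. Since $(\ref{2})$ is linear and $g_s,g_f$ are non-negative steady states associated with non-negative steady controls $u_s,u_f\in L^\infty(\overline{Q})$ (Proposition 4.1), for $\sigma\in[0,1]$ the convex combination
\[
g_\sigma=(1-\sigma)g_s+\sigma g_f
\]
is again a non-negative steady state of $(\ref{2})$ with control $u_\sigma=(1-\sigma)u_s+\sigma u_f\ge 0$, and by hypothesis $g_\sigma\ge\delta$ a.e. on $\Omega\times[0,a^*]\times[s_1^*,s_2^*]$. Thus $\{g_\sigma\}_{\sigma\in[0,1]}$ is a continuous path inside the cone of non-negative states, uniformly bounded below on the fertile window.

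Second, I would fix a controllability time $T^*>A-a_2+T_1+T_0$ as in Theorem 1.1 / Theorem 3.1 and combine the resulting $L^2$ null controllability with the $L^\infty$ estimate $(\ref{ertt})$ (exactly as in \cite{dy}) to obtain a constant $C^*>0$ with the following property: for every $z_0\in L^\infty(\overline{Q})$ there is $v\in L^\infty(Q)$ steering the homogeneous system to $0$ in time $T^*$, with
\[
\|v\|_{L^\infty(Q)}+\|z\|_{L^\infty(Q\times(0,T^*))}\le C^*\|z_0\|_{L^\infty(\overline{Q})}.
\]
Now discretise $[0,1]$ by $0=\sigma_0<\dots<\sigma_N=1$ with uniform mesh $1/N$. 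On the $i$-th slab $[iT^*,(i+1)T^*]$ I set $y=g_{\sigma_{i+1}}+z_i$, where $z_i$ solves the control problem above with initial datum $z_i(iT^*)=g_{\sigma_i}-g_{\sigma_{i+1}}=-\tfrac1N(g_f-g_s)$ and is driven to $z_i((i+1)T^*)=0$; the control on this slab is $u_{\sigma_{i+1}}+v_i$. By construction $y$ is continuous in time, $y(0)=g_{\sigma_0}=g_s$, $y(NT^*)=g_{\sigma_N}=g_f$, and on each slab $\|z_i\|_{L^\infty}\le \frac{C^*}{N}\|g_f-g_s\|_{L^\infty(\overline{Q})}$. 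Choosing $N>C^*\|g_f-g_s\|_{L^\infty}/\delta$ makes $\|z_i\|_\infty<\delta$, so that $y=g_{\sigma_{i+1}}+z_i\ge\delta-\|z_i\|_\infty>0$ on $\Omega\times[0,a^*]\times[s_1^*,s_2^*]$ throughout. Taking $T=NT^*$ and concatenating the controls yields the desired steering, uniqueness following from well-posedness of $(\ref{2})$.

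The hard part is \emph{global} non-negativity, i.e. $y\ge 0$ on all of $Q$ and not only on the window $[0,a^*]\times[s_1^*,s_2^*]$ where the steady states are bounded below by $\delta$: where $g_\sigma$ is close to $0$ (near $s=0$, forced by the boundary condition $p(x,a,0)=0$, and as $a\to A$) the crude bound $\|z_i\|_\infty<\delta$ no longer protects the sign. I would overcome this using the comparison/positivity principle for $(\ref{2})$ (Proposition 4.2, \cite{b9,B1}) together with the representation of $y$ along characteristics: the $\delta$-margin on the fertile window keeps the renewal inflow $y(x,0,s,t)=\int\limits_0^A\int\limits_0^S\beta\,y\,da\,d\hat{s}$ non-negative, and since the control is supported in $Q_1$ while the free dynamics is generated by the positivity-preserving semigroup $e^{tL}$, non-negative boundary and renewal data propagate to $y\ge0$ off the control support. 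Making this rigorous where $g_\sigma$ degenerates—controlling the sign of $z_i$ pointwise near $\{s=0\}$ and $\{a=A\}$ rather than merely its $L^\infty$ size—is the main technical obstacle; I expect it to require a boundary (Hopf-type) refinement of the staircase step, choosing $N$ large enough that the characteristics carrying the correction re-enter the fertile window before the sign can deteriorate.
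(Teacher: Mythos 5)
Your proposal follows essentially the same route as the paper: the convex path of steady states $g_{s,j}=(1-\tfrac{j}{M})g_s+\tfrac{j}{M}g_f$, the staircase of $L^\infty$ null-control steps of length $T^*$ with the bound $\|g_j\|_{L^\infty}\leq R(T^*)\|g_{s,j-1}-g_{s,j}\|_{L^\infty}$ and $M$ chosen large enough that this falls below $\delta$, and concatenation over $T=MT^*$. The global non-negativity step you flag as the main obstacle is handled in the paper exactly by the mechanism you sketch and requires no Hopf-type refinement: on the complement of $\Omega\times[0,a_*]\times[s_{1}^{*},s_{2}^{*}]$ the perturbed trajectory $G_j=g_j+g_{s,j}$ satisfies the free (uncontrolled) equation with non-negative interface, renewal and initial data, so the comparison principle (as in \cite{B1}) yields $G_j\geq 0$ there.
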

	\begin{proposition}
		Under the assumption of the theorem 1.1, if $T_0<\min\{a_2-a_1,\hat{a}-a_1\},$ the pair $(\mathcal{A}^*,\mathcal{B}^*)$ is final-state observable for every $T>T_1+A-a_2+T_0$ . In other words, if $T_0<\min\{a_2-a_1,\hat{a}-a_1\},$ for every $T>T_1+A-a_2+T_0$, there exist $K_T>0$ such that the solution $q$ of (\ref{2}) satisfies
		\begin{equation}
			\left(\int\limits_{0}^{S}\int\limits_{0}^{A}\int_{\Omega}q^2(x,a,s,T)dxdads\right)^{\frac{1}{2}}\leq K_T\int\limits_{0}^{T}\int\limits_{s_1}^{s_2}\int\limits_{a_1}^{a_2}\int_{\omega}|q(x,a,s,t)|dxdadsdt.\label{lafin}
		\end{equation}
		\\\end{proposition}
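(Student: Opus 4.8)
The plan is to derive the $L^2$–$L^1$ estimate (\ref{lafin}) from the $L^2$ final-state observability already established in Theorem 3.1, by upgrading the observation norm on the right-hand side from $L^2$ to $L^1$ with the help of the regularizing effect of the diffusion. It is worth noting that (\ref{lafin}) is precisely the dual formulation of the null controllability of (\ref{2}) by means of $L^\infty$ controls, which is the ingredient needed in the controllability-with-positivity result of Section 4 to steer the trajectory between steady states while preserving positivity; so it is natural that the argument should combine the $L^2$ controllability of Theorem 1.1 with the $L^\infty$ parabolic bounds of Lemma 4.1 and of the global $L^\infty$ estimate for the semigroup proved in Section 4.

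First I would start from Theorem 3.1, which for $T>T_1+A-a_2+T_0$ gives a constant $K_T$ with
\[
\int_0^S\int_0^A\int_\Omega q^2(x,a,s,T)\,dx\,da\,ds\le K_T\int_0^T\int_{s_1}^{s_2}\int_{a_1}^{a_2}\int_\omega q^2(x,a,s,t)\,dx\,da\,ds\,dt .
\]
Since $T$ is strictly above the threshold, the inequality persists after slightly shrinking the observation set to a cylinder $Q_1'\subset\subset Q_1$ and restricting the time integral to $(\eta,T)$ with $\eta>0$, exactly as in Proposition 3.1. The elementary interpolation $q^2\le \|q\|_{L^\infty}\,|q|$ then yields
\[
\int_\eta^T\!\!\int_{Q_1'}q^2\le \Big(\sup_{Q_1'\times(\eta,T)}|q|\Big)\int_0^T\!\!\int_{Q_1}|q|,
\]
so that the whole matter is reduced to an $L^\infty$ bound for $q$ on the interior cylinder $Q_1'\times(\eta,T)$ that can be absorbed by the left-hand side.

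To obtain this $L^\infty$ bound I would exploit the regularizing effect of the diffusion along the characteristics. Freezing the transport direction $(a,s)\mapsto(a+\tau,s+\tau)$, the function $q$ solves, in the space variable, a heat equation driven by the bounded renewal term as a source; hence by Lemma 4.1 (the $L^\infty$ smoothing of $e^{t\Delta}$), together with the interior parabolic estimates underlying Proposition 3.2 and the global $L^\infty$ bound of Section 4, the supremum of $|q|$ over the smaller cylinder is controlled by the $L^2$ mass of $q$ over a larger cylinder with time interval bounded away from the origin.

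The hard part is precisely the closure of this scheme. A naive attempt to bound $\sup_{Q_1'\times(\eta,T)}|q|$ directly by $\|q(\cdot,T)\|_{L^2}$ would require a backward-in-time estimate for the diffusion, which is unstable and false in general, since intermediate-time norms can be exponentially larger than the final one. The correct argument must therefore use the regularizing effect only in the forward direction and balance it, through the duality between $L^\infty$ controls and the $L^1$ observation, against the $L^2$ observability already at hand; this absorption step — trading one power of the $L^2$ mass for an $L^\infty$ bound while keeping the final state $q(\cdot,T)$ on the left — is where the analysis is most delicate and constitutes the main obstacle. Once it is carried out, dividing through by $\big(\int_0^S\int_0^A\int_\Omega q^2(\cdot,T)\big)^{1/2}$ produces (\ref{lafin}) with a new constant $K_T$.
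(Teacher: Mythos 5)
Your proposal does not close. The paper itself disposes of this proposition in one sentence --- ``the proof is similar to that of Theorem 1.1'' --- by which it means: rerun the characteristics argument of Theorem 3.1, replacing the $L^2$-observation inequality for the heat equation (Proposition 3.2) by its $L^1$-observation counterpart
\[
\|w(\cdot,t_1)\|_{L^2(\Omega)}\le c_1e^{c_2/(t_1-t_0)}\int_{t_0}^{t_1}\int_{\omega}|w(x,\lambda)|\,dx\,d\lambda,
\]
which is a known estimate for the heat equation; it is exactly where the delicate absorption/telescoping argument lives, and it is carried out once and for all in the reference \cite{dd} of Micu, Roventa and Tucsnak that the paper invokes just before Theorem 4.2. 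You instead try to post-process the already-proved $L^2$--$L^2$ inequality of Theorem 3.1 via $q^2\le\|q\|_{L^\infty}|q|$, and you explicitly leave the resulting absorption step unproved. That step is not a technicality: as you yourself observe, bounding $\sup_{Q_1'\times(\eta,T)}|q|$ by $\|q(\cdot,\cdot,\cdot,T)\|_{L^2}$ is a backward-in-time parabolic estimate and is false; yet your closing sentence (``dividing through by $(\int q^2(T))^{1/2}$'') requires precisely an inequality of the form $\|q(T)\|_{L^2}^2\le K\,\|q(T)\|_{L^2}\int\!\!\int_{Q_1}|q|$, i.e.\ that very backward bound. The only other way to close your scheme would be a local $L^\infty$--$L^1$ estimate $\sup_{Q_1'\times(\eta,T)}|q|\le C\|q\|_{L^1(Q_1\times(0,T))}$, but $q$ is not a caloric function in $(x,t)$ on that cylinder: the transport in $(a,s)$ feeds information into $Q_1\times(0,T)$ through its lateral faces in age and size from regions where there is no observation, so no interior estimate confined to the observation cylinder can hold.

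The fix is to move the $L^1$ upgrade inside the proof of Theorem 3.1 rather than after it. Along each backward characteristic the restriction $w(x,\lambda)=\tilde q(x,t-\lambda,s+t-\lambda,\lambda)$ genuinely solves a heat equation in $x$ alone, so the $L^1$ observability inequality displayed above applies slice by slice at exactly the points where Proposition 3.2 was applied; the non-local renewal term is then handled as in Propositions 3.1 and 3.5. This localizes all of the delicate analysis in a single known lemma about the heat equation, instead of attempting the absorption at the level of the full age--size system, where, as your own discussion shows, it does not go through.
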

	The proof of the above proposition is similar to that of Theorem 1.1.\\
	In the following Theorem we prove the null controllability of the system (\ref{2}) by means of $L^{\infty}$ controls. Besides the above ingredients, we use a classical duality argument, following closely the methodology in Micu, Roventa and Tucsnak (see \cite{dd}).
	\begin{theorem}
		With the notation and with the assumptions in Theorem 1.1, for every $T>T_1+A-a_2+T_0$ and for every $y_0\in L^{\infty}(\overline{Q})$ there exists a control  $u\in L^{\infty}(Q_1)$ such that the solution $y$ of $(\ref{2})$ satisfies
		$y(x,a,s,T)=0$ for all $(x,a,s)\in \overline{Q}.$ Moreover, there exists a positive constant $R(T)$ such that for $y_0\in L^{\infty}(\overline{Q})$ the control
		function $u$ and the corresponding state trajectory $y$ satisfy
		\begin{equation}\|y\|_{L^{\infty}(Q)}+\|u\|_{L^{\infty}(Q)}\leq R(T)\|y_{0}\|_{L^{\infty}(\overline{Q})}.\label{evell}
		\end{equation}
	\end{theorem}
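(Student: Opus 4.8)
The plan is to obtain the $L^\infty$ control by duality, treating the $L^1$ final-state observability inequality $(\ref{lafin})$ as the dual statement of $L^\infty$ null controllability, following the scheme of Micu, Roventa and Tucsnak \cite{dd}: the $L^1$ observation on the right-hand side of $(\ref{lafin})$ is precisely the object dual to an $L^\infty$ control. First I fix $T>T_1+A-a_2+T_0$ and, for every adjoint datum $q_0\in D(\mathcal A^*)$, I let $q$ be the solution of the adjoint system $(\ref{3})$ with $q(\cdot,0)=q_0$ and introduce the convex functional
\[J(q_0)=\frac12\left(\int\limits_0^T\int\limits_{s_1}^{s_2}\int\limits_{a_1}^{a_2}\int_\omega |q(x,a,s,t)|\,dx\,da\,ds\,dt\right)^2+\int\limits_0^S\int\limits_0^A\int_\Omega y_0(x,a,s)\,q(x,a,s,T)\,dx\,da\,ds .\]
Writing $R(q_0)$ for the $L^1$ observation seminorm in the first term, Cauchy--Schwarz together with $(\ref{lafin})$ bounds the linear term by $\|y_0\|_{L^2(\overline Q)}\,\|q(\cdot,T)\|_{L^2(\overline Q)}\le K_T\|y_0\|_{L^2(\overline Q)}\,R(q_0)$, so that $J(q_0)\ge \tfrac12 R(q_0)^2-K_T\|y_0\|_{L^2(\overline Q)}\,R(q_0)$; hence $J$ is bounded below and coercive with respect to $R$.

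The second step is to produce a minimiser $\hat q_0$, with associated adjoint trajectory $\hat q$. Since $J$ is convex and lower semicontinuous and $y_0\in L^\infty(\overline Q)\subset L^2(\overline Q)$, I would run the direct method in the completion $\mathcal H$ of $D(\mathcal A^*)$ for the seminorm $R$: by $(\ref{lafin})$, $R$ is in fact a norm and the map $q_0\mapsto q(\cdot,T)$ extends to a bounded operator $\mathcal H\to L^2(\overline Q)$, so the linear term of $J$ extends continuously to $\mathcal H$. A minimising sequence is then bounded in $\mathcal H$, and passing to the limit yields the minimiser.

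Once $\hat q_0$ is found, I write the optimality condition $0\in\partial J(\hat q_0)$ and use that the subdifferential of $v\mapsto\tfrac12(\int|v|)^2$ is $(\int|v|)\,\operatorname{sign}(v)$: testing against an arbitrary adjoint solution $\phi$ gives
\[R(\hat q_0)\int\limits_0^T\int\limits_{s_1}^{s_2}\int\limits_{a_1}^{a_2}\int_\omega \operatorname{sign}(\hat q)\,\phi\,dx\,da\,ds\,dt+\int\limits_0^S\int\limits_0^A\int_\Omega y_0\,\phi(\cdot,T)\,dx\,da\,ds=0 .\]
This is exactly the weak formulation asserting that the control $u:=R(\hat q_0)\,\operatorname{sign}(\hat q)\,\mathbf 1_{Q_1}$, read with the natural reversal $t\mapsto T-t$ between the forward and adjoint flows, steers $y_0$ to $0$ at time $T$. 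Since $|\operatorname{sign}(\hat q)|\le1$ and $R(\hat q_0)$ is a finite constant, $u\in L^\infty(Q_1)$ with $\|u\|_{L^\infty(Q)}\le R(\hat q_0)$. Evaluating $J$ at $\hat q_0$ and using $J(\hat q_0)\le J(0)=0$ gives $R(\hat q_0)\le 2K_T\|y_0\|_{L^2(\overline Q)}$, and since $\overline Q$ is bounded this yields $\|u\|_{L^\infty(Q)}\le C\|y_0\|_{L^\infty(\overline Q)}$. Finally, inserting this control into the $L^\infty$ well-posedness estimate $(\ref{ertt})$ gives $\|y\|_{L^\infty(Q)}\le C(\|u\|_{L^\infty(Q)}+\|y_0\|_{L^\infty(\overline Q)})\le C\|y_0\|_{L^\infty(\overline Q)}$, and adding the two bounds produces $(\ref{evell})$ with a suitable $R(T)$.

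The step I expect to be the main obstacle is the existence of the minimiser: $J$ is coercive only in the $L^1$-type seminorm $R$, and $L^1$ lacks the weak compactness that would make the direct method routine. Making the completion argument rigorous --- in particular that $R$ separates points (which is exactly $(\ref{lafin})$) and that the linear term extends continuously and remains weakly lower semicontinuous on $\mathcal H$ --- is where the genuine work lies; an equivalent and perhaps cleaner route would be to regularise $J$ by an $L^2$ penalisation of $q(\cdot,T)$, solve the resulting strictly convex problems, and pass to the limit using the uniform bound furnished by $(\ref{lafin})$.
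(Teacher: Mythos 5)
Your overall strategy --- deducing the $L^\infty$ control by duality from the $L^1$-observation inequality $(\ref{lafin})$, in the spirit of Micu, Roventa and Tucsnak --- is the same duality the paper uses, and your final bookkeeping (bounding $\|u\|_{L^{\infty}}$ by $\|y_0\|_{L^{\infty}}$ and then invoking the well-posedness estimate $(\ref{ertt})$ to control $\|y\|_{L^{\infty}}$) matches the paper's. The problem lies in how you implement the duality, and it sits exactly where you yourself flag it. You build a HUM-type functional $J$ whose quadratic part is the square of the $L^1$ observation seminorm $R$, and you need a minimiser $\hat q_0$ in order to produce the bang-bang control $u=R(\hat q_0)\operatorname{sign}(\hat q)\mathbf{1}_{Q_1}$. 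Coercivity of $J$ with respect to $R$ is not enough: the completion $\mathcal{H}$ of $D(\mathcal{A}^*)$ under $R$ is an $L^1$-type, non-reflexive space, so a bounded minimising sequence need not have a weakly convergent subsequence and the direct method does not close. Your two proposed repairs (a completion-plus-lower-semicontinuity argument, or an $L^2$ penalisation with passage to the limit) are reasonable programmes, but neither is carried out, so as written the existence of $\hat q_0$ --- and hence of the control --- is not established. This is a genuine gap, not a technicality.

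The paper's proof avoids the issue entirely and requires no compactness. It defines the linear functional $\mathcal{F}\bigl(\mathcal{B}^*U^*(\cdot)q_0\bigr)=-\int_{\overline{Q}} y_0\, U^*(T)q_0$ on the subspace $\mathcal{X}=\{\mathcal{B}^*U^*(\cdot)q_0 : q_0\in K\}$ of $L^1(Q)$, observes that $(\ref{lafin})$ makes $\mathcal{F}$ well defined and bounded by $C\|y_0\|_{L^{\infty}(\overline{Q})}\|\cdot\|_{L^1(Q)}$, extends it by Hahn--Banach to all of $L^1(Q)$, and identifies the extension with an element $u\in L^{\infty}(Q)$ via the duality $(L^1)^*=L^{\infty}$; the duality identity then yields $y(\cdot,T)=0$ and the bound $\|u\|_{L^{\infty}}\le R\|y_0\|_{L^{\infty}}$. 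The price of this route is only the loss of the bang-bang structure of the control, which the theorem does not claim anyway. If you wish to keep your variational formulation, you must either actually run the penalised problems and pass to the limit using the uniform bound $R(\hat q_0^{\varepsilon})\le 2K_T\|y_0\|_{L^2(\overline{Q})}$, or replace the minimisation step by the Hahn--Banach extension argument.
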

	\begin{proof}
		We consider the pair $(\mathcal{A},\mathcal{B})$ defined in the section 2.
		As $proved$ in the section $\mathcal{A}$ is a infinitesimal generator of strongly semigroup $U(t)\quad t\geq 0$ of $K$.\\
		Consider the  subspace $\mathcal{X}$ of $L^{1}(Q)$ defined by:\[\mathcal{X}=\{\mathcal{B}^*U^*(t)q_0/q_0\in  K\}\]
		Given $y_0\in L^{\infty}(\overline{Q})$ consider the linear functional $\mathcal{F}$ on $\mathcal{X}$ defined by
		\[\mathcal{F}(\mathcal{B}^*U^*(t)q_0)=-\int\limits_{0}^{S}\int\limits_{0}^{A}\int_{\Omega}y_{0}(U^*(T)q_0).\]
		The fact that this functional is well defined follows from $(\ref{lafin})$ . Moreover we have 
		\[|\mathcal{F}(w)|\leq C\|y_{0}\|_{L^{\infty}(\overline{Q})}\|w\|_{L^{1}(Q)}\hbox{ for all } w\in \mathcal{X}\]
		By the Hahn-Banach Theorem, $\mathcal{F}$ can be extended to a bounded linear functional $\mathcal{\tilde{F}}\in L^{1}(Q)$ such that
		\[|\mathcal{\tilde{F}}(u)|\leq C\|y_{0}\|_{L^{\infty}(\overline{Q})}\|v\|_{L^{1}(Q)}\hbox{ for all } v\in L^{1}(Q).\]
		By the Riesz representation theorem it follows that there exists $u\in L^{\infty}(Q)$ and $R>0$ such that \begin{equation}\|u\|_{L^{\infty}(Q)}\leq R\|q_0\|_{L^{\infty}(\overline{Q})}\label{fim}\end{equation}
		and
		\[\int\limits_{0}^{T}\int\limits_{0}^{S}\int\limits_{0}^{A}\int_{\Omega}u(t-\tau)\mathcal{B}^*U^*(t)q_0+\int\limits_{0}^{S}\int\limits_{0}^{A}\int_{\Omega}y_0U^*(T)q_0=0\quad q_0\in K.\]
		which is equivalent to
		\[\int\limits_{0}^{T}<U(T-\tau)\mathcal{B}u(\tau),q_0>_K+<U(T)y_{0},q_0>_K=0 \quad q_0\in K,\]
		Since the above construction holds for every $q_0\in K$, we get 
		\[y(x,a,s,T)=\int\limits_{0}^{T}U(T-\tau)\mathcal{B}u(\tau)d\tau+U(T)y_{0}=0.\]
		By using $(\ref{ertt})$ and $(\ref{fim})$, we obtain the estimation $(\ref{evell})$
	\end{proof}
	\begin{proof}{Proof of the Theorem 4.1}
		The proof will be in three parts.\\
		\textbf{First part}:\\
		Let $g_s$ and $g_f$ be a two non negative steady states of the system $(\ref{2})$ and let $u_s$ and $u_f$ be  the corresponding steady controls. We set 
		\begin{equation}
			g_{s,j}=\left(1-\frac{j}{M}\right)g_s+\frac{j}{M}g_f \quad  and \quad u_{s,j}=\left(1-\frac{j}{M}\right)u_s+\frac{j}{M}u_f\quad j\in \{ 0,1,...,M\}
		\end{equation} 
		where $M\in \mathbb{N}.$ We assume that there exists $\delta>0$  such that the non negative steady verify
		\[g_s(.,a,s),\hbox{ } g_f(.,a,s)\geq\delta\hbox{ a.e. on }  \Omega\times[0,a_*]\times[s_{1}^{*},s_{2}^{*}]\hbox{ } j\in  \{ 0,1,...,M\}),\] where $a_1<a_2<a^*$ (without the generality) and $s_{1}^{*}< s_{2}^{*}.$ Then we have \[g_{s,j}\geq\delta\quad \hbox{ a.e. on } \Omega\times[0,a_*]\times[s_{1}^{*},s_{2}^{*}]\quad  j\in \{ 0,1,...,M\}.\]
		\textbf{Part 2}
		Using the result of the Theorem 4.2, for $a_2<a_*$ and $T^*>a_*-a_2+T_0+T_1,$ there exist a control $u_j\in L^{\infty}(\omega\times(a_1,a_2)\times(s_1,s_2)\times (0,T^*))$ such the $g_j(x,a,s,T^*)=0$ for all $j\in j\in \{ 0,1,...,M\},$ where $g_j$ is the solution of the following system:\\
		\begin{equation}
			\left\lbrace 
			\begin{array}{ll}
				\dfrac{\partial g_j}{\partial t}+\dfrac{\partial g_j}{\partial a}+\dfrac{\partial g_j}{\partial s}-\Delta g_j+\mu(a)g_j=u_j &\text{ in }Q ,\\ 
				\dfrac{\partial g_j}{\partial\nu}=0&\text{ on }\Sigma,\\ 
				g_j\left( x,0,s,t\right) =\int\limits_{0}^{A}\int\limits_{0}^{S}\beta(a,\hat{s},s)g_j(x,a,\hat{s},t)da d\hat{s}&\text{ in }Q_{S,T} \\
				g_j\left(x,a,0,t\right)=0&
				\text{ in }Q_{A,T};\\
				g_j(x,a,s,0)=(g_{s,j-1}-g_{s,j})&\text{ in }Q_{A,S}\end{array}
			\right..
		\end{equation}	 
		Moreover, there exists $R(T^*)$ such that:
		\[\|g_j\|_{L^{\infty}(\overline{Q}\times(0,T^*))}\leq R(T^*)\|g_{s,j-1}-g_{s,j}\|_{L^{\infty}(\overline{Q})}.\]
		We denoted by 
		\[G_j=g_j+g_{s,j}\quad  U_j=u_j+u_{s,j}\quad (x,a,s,t)\in \overline{Q}\times(0,T^*).\]
		We have:
		\[G_j(x,a,s,0)=g_{s,j-1}(x,a,s)\quad G_j(x,a,s,T^*)=g_{s,j}(x,a,s)\quad (x,a,s,t)\in \overline{Q}\times(0,T^*).\]
		Choosing $M$ sufficiently large large to have 
		\[\|g_{s,j-1}-g_{s,j}\|_{L^{\infty}(\overline{Q})}\leq\dfrac{\delta}{R(T^*)}.\]
		We obtain
		\[\|g_j\|_{L^{\infty}(\overline{Q}\times(0,T^*))}\leq \delta.\]
		Therefore, we get 
		\begin{equation}G_j(x,a,s,t)= g_j(x,a,s,t)+g_{s,j}(x,a,s)\geq 0\hbox{ a.e on } \Omega\times[0,a_*]\times[s_{1}^{*},s_{2}^{*}]\times (0,T^*).\label{vu}\end{equation}
		Let us now consider the sign of $G_j$ on the second part of $Q.$ For this we denote by
		\[\overline{Q}_1=\overline{Q}\backslash\left(\Omega\times[0,a_*]\times[s_{1}^{*},s_{2}^{*}]\right)\text{, } \overline{\Sigma}_1=\overline{\Sigma}\backslash\left(\partial\Omega\times[0,a_*]\times[s_{1}^{*},s_{2}^{*}]\right)\text{ and }\Theta=[0,s_{1}^{*})\cup (s_{2}^{*},S).\] Therefore, $G_j$ verifies follows
		\begin{equation}
			\left\lbrace 
			\begin{array}{ll}
				\dfrac{\partial G_j}{\partial t}+\dfrac{\partial G_j}{\partial a}+\dfrac{\partial G_j}{\partial s}-\Delta G_j+\mu(a)G_j=0 &\text{ in }\overline{Q}_1\times (0,T^*) ,\\ 
				\dfrac{\partial G_j}{\partial\nu}=0&\text{ on }\overline{\Sigma}_1\times(0,T^*),
			\end{array}
			\right.
		\end{equation}
		without the control term (Indeed the support of the control is limited only on the part $[0,a^*].$).	 
		Moreover, we have 
		$$
		G_j\left( x,a_*,s,t\right)\geq 0\text{ in }\Omega\times \Theta\times(0,T^*);$$ 
		$$ 
		G_j\left(x,a,0,t\right)=0\text{ and }G_j\left(x,a,s_{2}^{*},t\right)\geq 0
		\text{ in }\Omega\times(a^*,A)\times (0,T^*)$$
		\hbox{ and }
		$$
		G_j(x,a,s,0)=g_{s,j-1}\geq 0\text{ in }\Omega\times (a^*,A)\times\Theta\times (0,T^*).
		$$
		Using the comparison principle ( see for instance \cite{B1} in the case of a model dependent on age and spatial position), we have :
		\begin{equation}
			G_j(x,a,s,t)\geq 0\hbox{ a.e on } \overline{Q}_1\times (0,T^*).\label{vuu}
		\end{equation}
		Finally, combining $(\ref{vu})\hbox{ and }(\ref{vuu}),$ we get
		\begin{equation}
			G_j(x,a,s,t)\geq 0\hbox{ a.e on } \overline{Q}\times (0,T^*).
		\end{equation}
		\textbf{Part 3}
		We define 
		\begin{equation}
			y(x,a,s,t)=\left\lbrace
			\begin{array}{ll}
				G_1(.,.,.,t)\quad if \quad t\in (0,T^*)\\
				G_2(.,.,.,t-T^*)\quad if \quad t\in (T^*,2T^*)\\
				.\\
				.\\
				.\\
				G_M(.,.,.,t-(M-1)T^*)\quad if \quad t\in ((M-1)T^*,MT^*)
			\end{array}
			\right.
		\end{equation}
		and
		\begin{equation}
			u(x,a,s,t)=\left\lbrace
			\begin{array}{ll}
				U_1(.,.,.,t)\quad if \quad t\in (0,T^*)\\
				U_2(.,.,.,t-T^*)\quad if \quad t\in (T^*,2T^*)\\
				.\\
				.\\
				.\\
				U_M(.,.,.,t-(M-1)T^*)\quad if \quad t\in ((M-1)T^*,MT^*)
			\end{array}
			\right.
		\end{equation}
		The pair $(y,u)$ satisfies $(\ref{2})$ for $T=MT^*.$
		Hence the result.
	\end{proof}
	\section{Further comments and open problems}
	\subsection{Renewal term in the boundary condition of size}
	For size-dependent population dynamics models, we can also have a renewal term at the size boundary condition. Indeed, we consider the following system (see \cite{o}):
	\begin{equation}
		\left\lbrace\begin{array}{ll}
			\dfrac{\partial y}{\partial t}+\dfrac{\partial y}{\partial a}+\dfrac{\partial y }{\partial s}-\Delta y+\mu(a,s)y=mu &\hbox{ in }Q ,\\ 
			\dfrac{\partial y}{\partial\nu}=0&\hbox{ on }\Sigma,\\ 
			y\left( x,0,s,t\right) =\displaystyle\int\limits_{0}^{A}\beta_1(a)y(x,a,s,t)da &\hbox{ in } Q_{S,T} \\
			y(x,a,0,t)=\int\limits_{0}^{S}\beta_2(s)y(x,a,s,t)da ds& \hbox{ in } Q_{A,T}\\
			y\left(x,a,s,0\right)=y_{0}\left(x,a,s\right)&
			\hbox{ in }Q_{A,S}.
		\end{array}\right.
		\label{2gao}
	\end{equation}
	where,  $y_{0}$ is given in $K$, the positive function $\mu_1\text{ and }\mu_1$ denotes respectively the natural mortality rate of individuals of age $a$ and the natural mortality rate of individuals of size $s,$ supposed to be independent of the spatial position $x$ and of time $t$. The control function is $u$, depending on $x$, $a$, $s$ and $t$, where as $m$ is the characteristic function. \\
	We denote by $\beta_1\text{ and }\beta_2$ the positive functions describing respectively whereas the fertility rate age $a$ depending and fertility rate size $s$ depending. The fertility rate $\beta_1\text{ and }\beta_2$ is supposed to be independent of the spatial position $x$ and of time $t,$ so that the density of newly born individuals at the point $x$ at time $t$ of size $s$ privedeing of the boundary condition in age is given by
	\[\int_{0}^{A}\beta_1(a)y(x,a,s,t)da.\]
	and the density of newly born individuals at the point $x$ at time $t$ of size $s$ privedeing of the boundary condition in size is given by
	\[\int_{0}^{S}\beta_2(s)y(x,a,s,t)ds.\]
	We assume that the fertility rate $\beta_1, \text{, }\beta_2$ and the mortality rate $\mu_1\text{ and }\mu_2$ satisfy the demographic property:
	\[
	(H_1)=
	\left\{
	\begin{array}{c}
		\mu_1\left(a\right)\geq 0 \text{ for every } (a)\in (0,A)\\
		\mu_1\in L^{1}\left([0,A^*]\right)\quad\text{ for every }\quad A^*\in [0,A) \\
		\int_{0}^{A}\mu_1(a)da=+\infty
	\end{array} \right..
	\]
	,
	\[
	(H_2)=
	\left\{
	\begin{array}{c}
		\mu_2\left(s\right)\geq 0 \text{ for every } (a)\in (0,A)\\
		\mu_2\in L^{1}\left([0,S^*]\right)\quad\text{ for every }\quad S^*\in [0,A) \\
		\int_{0}^{S}\mu_2(s)ds=+\infty
	\end{array} \right..
	\]
	,
	\[
	(H_{3})=
	\left\{
	\begin{array}{c}
		\beta_1(a)\in C\left([0,A]\right)\text{ and }\beta_1\left(a\right)\geq0 \text{ in } [0,A]\\
		\beta_2\left(s\right)\in C\left([0,S]\right)
		\text{ and }\beta_2\left(s\right)\geq0 \text{ in } [0,S]
	\end{array}
	\right..
	\]
	We consider the following hypotheses:
	\[
	(H_4):
	\begin{array}{c}
		\beta_1(a)=0 \text{ } \forall a\in (0,\hat{a}) \text{ where } \hat{a}\in (0,A). 
	\end{array}
	\]
	\[
	(H_5):
	\begin{array}{c}
		\beta_2(s)=0 \text{ } \forall s\in (0,\hat{s}) \text{ where } \hat{s}\in (0,S). 
	\end{array}
	\]
	Also we denote by \[Q_{1}=\omega\times (a_1,a_2)\times (s_1,s_2),\]with $0\leq a_1<a_2\leq A\text{ and }0\leq s_1<s_2\leq S$ where $\omega\subset \Omega$ is an open set. 
	We denote by $T_2=s_1+S-s_2\text{ and }T_3=a_1+A-a_2.$ \\
	Using our main result of \cite{abst}, we obtain the following results: 
	\begin{theorem}
		Assume that $\beta_1\text{, }\beta_2$ and $\mu=\mu_1+\mu_2$ satisfy the conditions ($H_1)-(H_2)-(H_3)-(H_4)-(H_5)$ above.\\  Assume that $a_1< \hat{a}\text{, } s_1<\hat{s}$ and $T_2<\min\{a_2-a_1,\hat{a}-a_1\}.$	Then for every $T>a_1+A-a_2+2T_2$ and for every $y_0\in L^2(\Omega\times(0,A)\times (0,S))$ 
		there exists a control $u_1\in L^2(Q_1)$ such that the solution $y$ of $(\ref{2})$ satisfies 
		\[ y(x,a,s,T)=0\hbox{ a.e }  .x \in \Omega\text{ } a \in (0,A)\text{ and } s \in (0,S).\]	
	\end{theorem}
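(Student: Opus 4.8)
The plan is to reproduce, in this two-renewal setting, the duality scheme of Section~3. By the equivalence between null controllability and final-state observability (Theorem~11.2.1 of \cite{b8}), it suffices to prove an observability inequality for the adjoint of $(\ref{2gao})$. Transposing the two renewal boundary conditions $y(x,0,s,t)=\int_0^A\beta_1(a)y\,da$ and $y(x,a,0,t)=\int_0^S\beta_2(s)y\,ds$ exactly as in Proposition~2.1, each boundary term produces one nonlocal source, so the adjoint reads
\[\partial_t q-\partial_a q-\partial_s q-\Delta q+(\mu_1(a)+\mu_2(s))q=\beta_1(a)q(x,0,s,t)+\beta_2(s)q(x,a,0,t)\]
together with $q(\cdot,A,\cdot)=0$, $q(\cdot,\cdot,S)=0$ and $\partial_\nu q=0$. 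The objective is then to dominate $\|q(\cdot,T)\|_{L^2}^2$ by the observation integral over $Q_1$ whenever $T>a_1+A-a_2+2T_2$.

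First I would estimate the two nonlocal terms separately, mimicking Proposition~3.1. For the age term $q(x,0,s,t)$, assumption $(H_4)$ makes the source vanish on $\{a<\hat a\}$, so after the weight $\tilde q=q\exp(-\int_0^a\mu_1-\int_0^s\mu_2)$ the function $w(x,\lambda)=\tilde q(x,t-\lambda,s+t-\lambda,\lambda)$ solves a pure heat equation along the characteristic issued from $(0,s,t)$; applying the parabolic observability of Proposition~3.2 on a window $(t_0,t_1)$ whose image lies in $(a_1,a_2)\times(s_1,s_2)$ yields, under $a_1<\hat a$ and $T_2<\min\{a_2-a_1,\hat a-a_1\}$, a bound of the form
\[\int_\eta^T\int_0^S\int_\Omega q^2(x,0,s,t)\,dx\,ds\,dt\le C\int_0^T\int_{s_1}^{s_2}\int_{a_1}^{a_2}\int_\omega q^2\,dx\,da\,ds\,dt.\]
By the symmetry between the age and the size variables (both transported at unit speed by $\partial_a+\partial_s$), the same computation run on the characteristic issued from $(a,0,t)$ and using $(H_5)$ together with $s_1<\hat s$ gives the analogous bound for the size term $\int_\eta^T\int_0^A\int_\Omega q^2(x,a,0,t)$.

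Next I would estimate $q(\cdot,T)$ by decomposing the $(a,s)$-rectangle and imitating the proof of Theorem~3.1. Writing $q=u_1+u_2$, with $u_1$ the source-free backward solution carrying the data $q(\cdot,\eta)$ and $u_2$ the Duhamel contribution of the two renewal sources, the term $u_2(\cdot,T)$ is absorbed by the two nonlocal estimates above through an $L^2$ bound of the type of Proposition~3.5. For $u_1(\cdot,T)$ the choice $T>a_1+A-a_2+2T_2$ guarantees, via the representation $(\ref{alp})$, that every backward characteristic from $(a,s,T)$ either enters $\omega\times(a_1,a_2)\times(s_1,s_2)$ before meeting the fertile region or leaves the domain through $a=A$ or $s=S$; the slabs near $\{a<a_1\}$ and $\{s<s_1\}$ are treated directly as in Propositions~3.3 and~3.4, and the intermediate slab is handled with the identity $u_1=q-u_2$. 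Summing all contributions gives the inequality $(\ref{A})$ and hence the theorem, which is the reduction meant by ``using our main result of \cite{abst}''.

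The hard part will be the bookkeeping of the double renewal: a characteristic descending from the final time may first be renewed at $a=A$ and subsequently at $s=S$, and one must verify that $2T_2$ is precisely the residual time needed for the twice-renewed characteristic to still cross the observation cylinder, while the conditions $a_1<\hat a$, $s_1<\hat s$ and $T_2<\min\{a_2-a_1,\hat a-a_1\}$ rule out the blind characteristics of Figures~5 and~6 in both variables. The genuinely delicate point is to close the two nonlocal estimates simultaneously without circularity, since the source $\beta_1 q(\cdot,0,\cdot)$ feeds the $s=0$ boundary while $\beta_2 q(\cdot,\cdot,0)$ feeds the $a=0$ boundary; the vanishing of $\beta_1$ and $\beta_2$ near the origin granted by $(H_4)$ and $(H_5)$ is what decouples them on the relevant characteristics.
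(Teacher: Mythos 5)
The paper itself gives no proof of this theorem: it appears in the ``Further comments'' section and is simply asserted to follow from the main result of \cite{abst}, with the remark that the hypotheses become stronger because the adjoint now carries two nonlocal sources. Your plan is therefore the natural reconstruction of the intended route (duality, adjoint with sources $\beta_1(a)q(x,0,s,t)+\beta_2(s)q(x,a,0,t)$, estimation of each nonlocal term by characteristics plus the parabolic observability of Proposition~3.2, then the $q=u_1+u_2$ decomposition at the final time), and you correctly identify the coupling of the two renewal terms as the crux. Two points, however, are not merely ``bookkeeping'' but genuine gaps in the argument as written.

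First, your claim that under $(H_4)$ the function $w(x,\lambda)=\tilde q(x,t-\lambda,s+t-\lambda,\lambda)$ solves a \emph{pure} heat equation along the characteristic issued from $(0,s,t)$ is not correct as stated: $(H_4)$ kills only the source $\beta_1(a)q(x,0,s,t)$ on $\{a<\hat a\}$, while the second source $\beta_2(s)q(x,a,0,t)$ vanishes only where the \emph{size} coordinate, which along that characteristic equals $s+(t-\lambda)=s+a$, is below $\hat s$. For the observation window $a\in(a_1,a_1+\delta)$ this requires $s+a_1+\delta<\hat s$, which fails for most $s\in(0,S)$; so either you must first show that $q(x,a,0,t)$ vanishes (or is separately controlled) on the relevant portion of the characteristic, or you must treat $w$ as solving a heat equation with a source and absorb that source, and neither step is supplied. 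Second, the ``symmetric computation'' you invoke for the size term $q(x,a,0,t)$ would, by the symmetry you appeal to, require a hypothesis of the form $T_3<\min\{s_2-s_1,\hat s-s_1\}$ (the mirror image of $T_2<\min\{a_2-a_1,\hat a-a_1\}$), but Theorem~5.1 assumes only $s_1<\hat s$; the symmetric argument therefore does not close under the stated hypotheses, and one must explain how the time condition $T>a_1+A-a_2+2T_2$ compensates. Until these two points are resolved the estimate of the second nonlocal term, and hence the observability inequality, remains unproven.
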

	and
	\begin{theorem}
		Assume that $\beta_1\text{, }\beta_2$ and $\mu=\mu_1+\mu_2$ satisfy the conditions ($H_1)-(H_2)-(H_3)-(H_4)-(H_5)$ above.\\  Assume that $a_1< \hat{a}\text{, } s_1<\hat{s}$ and $T_3<\min\{s_2-s_1,\hat{s}-s_1\}.$	Then for every $T>s_1+S-s_2+2T_3$ and for every $y_0\in L^2(\Omega\times(0,A)\times (0,S))$ 
		there exists a control $u_1\in L^2(Q_1)$ such that the solution $y$ of $(\ref{2})$ satisfies 
		\[ y(x,a,s,T)=0\hbox{ a.e. }  x \in \Omega\text{ } a \in (0,A)\text{ and } s \in (0,S).\]	
	\end{theorem}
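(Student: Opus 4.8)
The plan is to exploit the symmetry of system~(\ref{2gao}) under interchange of the age and the size variables and thereby reduce the statement to the previous theorem. As a formal object, (\ref{2gao}) is invariant under the substitution $(a,A,a_1,a_2,\hat a,\beta_1,\mu_1)\leftrightarrow(s,S,s_1,s_2,\hat s,\beta_2,\mu_2)$: both variables enter through a transport term ($\partial_a$, $\partial_s$), each carries its own renewal condition at the left endpoint ($y(x,0,s,t)=\int_0^A\beta_1(a)y\,da$ and $y(x,a,0,t)=\int_0^S\beta_2(s)y\,ds$), and the mortality splits additively as $\mu=\mu_1(a)+\mu_2(s)$. Under this swap the hypotheses $(H_1)$--$(H_5)$ are preserved, the quantity $T_2=s_1+S-s_2$ is exchanged with $T_3=a_1+A-a_2$, the constraint $T_2<\min\{a_2-a_1,\hat a-a_1\}$ becomes $T_3<\min\{s_2-s_1,\hat s-s_1\}$, and the control time $T>a_1+A-a_2+2T_2$ becomes $T>s_1+S-s_2+2T_3$. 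Thus the present statement is precisely the image of the previous theorem under the age/size interchange, and it suffices to check that the argument behind that theorem is symmetric in the two structuring variables.

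First I would reduce, by the duality principle \cite[Theorem 11.2.1]{b8}, the null controllability to the final-state observability of the adjoint of (\ref{2gao}). Proceeding as in Proposition 2.1, the adjoint reads
\[
\partial_t q-\partial_a q-\partial_s q-\Delta q+(\mu_1(a)+\mu_2(s))q=\beta_1(a)\,q(x,0,s,t)+\beta_2(s)\,q(x,a,0,t),
\]
with $q(\cdot,A,\cdot)=0$, $q(\cdot,\cdot,S)=0$, homogeneous Neumann data, and datum $q_0$ at $t=0$; the inequality to establish is
\[
\int_0^A\!\int_0^S\!\int_\Omega q^2(x,a,s,T)\,dx\,ds\,da\le K_T\int_0^T\!\int_{s_1}^{s_2}\!\int_{a_1}^{a_2}\!\int_\omega q^2\,dx\,da\,ds\,dt.
\]
The two non-local terms $q(x,0,s,t)$ and $q(x,a,0,t)$ play symmetric roles to the single renewal term of Proposition 3.1, and I would estimate each from the observation box $Q_1$ by integrating the adjoint along its characteristics --- $\lambda\mapsto(t-\lambda,\,s+t-\lambda,\,\lambda)$ for the term at $a=0$ and $\lambda\mapsto(a+t-\lambda,\,t-\lambda,\,\lambda)$ for the term at $s=0$ --- and invoking the parabolic final-state observability of Proposition 3.2 for the heat equation satisfied by $w(x,\lambda)=\tilde q(x,\cdot,\cdot,\lambda)$, exactly as in the proof of Proposition 3.1. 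Here $\tilde q=q\exp(-\int_0^a\mu_1-\int_0^s\mu_2)$ removes the zero-order terms, and on the region $\{a<\hat a\}\cup\{s<\hat s\}$ the assumptions $(H_4)$--$(H_5)$ make the corresponding renewal source vanish, so that $w$ genuinely solves a pure heat equation there.

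The propagation to the final slice then follows the abstract final-state observability result of \cite{abst}, applied as in the proof of Theorem~3.1 and of the previous theorem: one splits $\{t=T\}$ according to whether the backward characteristic through $(a,s,T)$ enters $Q_1$ directly, exits through a boundary on which $q$ vanishes, or must first be renewed at $s=0$ (respectively $a=0$) before reaching $(s_1,s_2)$ inside $\omega$. The constraint $T_3<\min\{s_2-s_1,\hat s-s_1\}$, together with $s_1<\hat s$ and $(H_5)$, guarantees that the size-renewal contribution is estimated before individuals become fertile in size, while the time budget $s_1+S-s_2+2T_3$ is exactly what every relevant characteristic needs to cross its size window, be renewed, and enter the observation region.

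I expect the principal difficulty to be the bookkeeping of the characteristic geometry when both renewal terms are simultaneously active: a characteristic may now be renewed both at $a=0$ and at $s=0$, so one must verify that the decomposition of $\{t=T\}$ and of the intermediate time intervals covers every case without a circular dependence between the two non-local terms. This is precisely where $a_1<\hat a$, $s_1<\hat s$ and $(H_4)$--$(H_5)$ are used: they localize the supports of $\beta_1$ and $\beta_2$ away from the origin and decouple the two estimates. Once this decoupling is in place the remaining computations are identical, by the symmetry noted above, to those already carried out for the previous theorem, and \cite{abst} supplies the final assembly.
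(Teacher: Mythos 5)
Your overall route is the one the paper itself takes: for this theorem and its companion the paper gives no detailed proof, stating only that the results follow ``using our main result of \cite{abst}'' together with the Section~3 machinery (duality, estimation of the non-local traces along characteristics via Proposition~3.2, propagation to the final slice). Your reduction of the statement to the preceding theorem via the age--size interchange $(a,A,a_1,a_2,\hat a,\beta_1,\mu_1)\leftrightarrow(s,S,s_1,s_2,\hat s,\beta_2,\mu_2)$ is valid and is in fact a cleaner way to organize the pair of results than the paper's two parallel statements: the system (\ref{2gao}), the box $Q_1$, the hypotheses and the quantities $T_2$ and $T_3$ are exchanged exactly as you say, so only one argument is needed. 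Your identification of the adjoint source as $\beta_1(a)\,q(x,0,s,t)+\beta_2(s)\,q(x,a,0,t)$ is also correct.

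One step is wrong as written, and it sits at the crux. You claim that on $\{a<\hat a\}\cup\{s<\hat s\}$ the assumptions $(H_4)$--$(H_5)$ make the renewal source vanish so that $w(x,\lambda)=\tilde q(x,t-\lambda,s+t-\lambda,\lambda)$ solves a pure heat equation. On the union only one of the two sources dies: $(H_4)$ kills $\beta_1(a)q(x,0,s,t)$ where $a<\hat a$, and $(H_5)$ kills $\beta_2(s)q(x,a,0,t)$ where $s<\hat s$, but to invoke Proposition~3.2 for $w$ you need the \emph{entire} right-hand side to vanish along the portion of characteristic you use, i.e.\ you must remain in the intersection $\{a<\hat a\}\cap\{s<\hat s\}$. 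Along the characteristic issued from $(0,s,t)$ the size coordinate equals $s$ plus the age coordinate, so keeping it below $\hat s$ while the age coordinate sweeps the observation window is precisely what forces the extra hypotheses $s_1<\hat s$ and $T_3<\hat s-s_1$; this is what the paper means by ``the conditions become stronger because the adjoint system has two non-local second members.'' Your sketch names this decoupling as the principal difficulty but does not actually carry it out, and in particular does not verify that the two traces $q(x,0,\cdot,\cdot)$ and $q(x,\cdot,0,\cdot)$ can each be estimated without a circular dependence on the other. Since the paper omits this verification as well, supplying it carefully --- with the intersection, not the union --- is the one piece of genuine work your proposal still leaves open.
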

	Here, the conditions become stronger because the adjoint system has two non-local second members, and the estimation of these non-local terms imposes additional assumptions.\\
	The question arises whether the control time can be improved. We may be tempted to consider the characteristics according to the three temporal variables.\\
	Moreover, some open issues and generalizations remain to be investigated. They are in order:
	\begin{itemize}
		\item\textbf{The growth Modulus depending of size $s$}: One is also interested to the case where the growth function is a variable function depending on $s$. In this case the characteristics lines are no longer vectors but curves.
		\item \textbf{The nonlinear case}: Other possible directions for future extensions of the results and methods in this work concern nonlinear problems (such as considering, for instance, mortality rates depending on the total population or the non-linearity in the newborns.)
		\item \textbf{Numerical implementation}: For a given fertility rate $\beta,$  the mortality rate $\mu,$ the initial condition $y_0$ and a positive parameter $\epsilon>0$, how to determine a numerical algorithm allowing to determine the $\epsilon$-approximate null control function $h?$ 
	\end{itemize}
	\subsection{Null controllability from measurable control support}
	Given a small subset $E\subset\Omega$ (of positive Lebesgue measure or at least not too small in a sense to be made precise later).
	Recall that the Hausdorff content of a set $E\subset \mathbb{R}^n\text{ with }n\in \mathbb{N}$ is 
	$$
	\mathcal{C}^{d}_{\mathcal{H}}(E)=\inf_{d}\{\sum\limits_{j}r_{j}^{d}, E\subset U_{j}B(x_j,r_j)\}
	$$ 
	and the Hausdorff dimension of E is defined as
	$$
	\dim_{\mathcal{H}}(E)=\inf_{d}\{d,\mathcal{C}^{d}_{\mathcal{H}}(E)\}.
	$$
	We shall denote by $|E|$ the Lebesgue measure of the set $|E|.$ We have a following recent result
	establish by Nicolas Burq and Iv\'an Moyano in \cite{ivan}.
	\begin{proposition}(See \cite{ivan})\label{propo1}
		(Null controllability from sets of positive measure). Let $F\subset \Omega\times(0,T)$ of positive Lebesgue measure. Then, there exists $C>0$ such that for any $u_0\in L^2(\Omega)$ the solution $u=\exp(t\Delta)u_0$ to the heat equation
		$$(\partial_t-\Delta)u=0\text{, }u\displaystyle|_{\partial\Omega}=0 \text{ (Dirichlet condition) or } \partial_{\nu}u\displaystyle|_{\partial\Omega}=0  \text{ (Neumann condition), } u\displaystyle|_{t=0}=u_0, $$
		satisfies
		$$\displaystyle\|e^{T\Delta}u_0\|_{L^2(\Omega)}\leq C\int_{F}|u|(x,t)dxdt.$$
		\begin{proposition}
			(Observability and exact controllability from zero measure sets). There exists $\delta\in (0,1)$ (depending only on $n$) which depends only of $n$, such that for any $E\subset \Omega$ of positive $d-\delta$ dimensional Hausdorff content,
			and any $J\subset (0,T)$ of positive Lebesgue measure, there exists $C>0$ such that for any $u_0\in L^2(\Omega)$ the solution $u=\exp(t\Delta)u_0$ to the heat equation
			$$(\partial_t-\Delta)u=0\text{, }u\displaystyle|_{\partial\Omega}=0 \text{ (Dirichlet condition) or } \partial_{\nu}u\displaystyle|_{\partial\Omega}=0  \text{ (Neumann condition), } u\displaystyle|_{t=0}=u_0, $$
			satisfies
			$$\displaystyle\|e^{T\Delta}u_0\|_{L^2(\Omega)}\leq C\int_{J}\sup_{x\in E}|u|(x,t)dt.$$
		\end{proposition}
		Using this previous proposition the condition $\omega$ an open subset of $\Omega$ can be replaced by $\omega$ measurable.
		Thus, by repeating all the preceding calculations by substituting the result of Proposition 3.1 by the result of Proposition 5.1, we obtain the same results with $\omega\subset \Omega$ the measurable (of positive Lebesgue measure) domain.
	\end{proposition}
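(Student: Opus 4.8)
The final claim is that the null-controllability statements of Theorems 1.1 and 1.2 survive when the open observation set $\omega\subset\Omega$ is replaced by a merely measurable set of positive Lebesgue measure (and, via the Hausdorff-content estimate, by certain Lebesgue-null sets of positive $(d-\delta)$-content). The plan is to show that the hypothesis ``$\omega$ open'' is used throughout the development at exactly one point, namely when the parabolic observability inequality of Proposition 3.2 is invoked, and that substituting for it the positive-measure observability estimate stated above propagates through every subsequent step without change.

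First I would trace the dependence on $\omega$. By \cite{b8} the null controllability of $(\mathcal A,\mathcal B)$ is equivalent to the final-state observability of $(\mathcal A^*,\mathcal B^*)$, that is, to the inequalities of Theorems 3.1 and 3.2; these reduce, respectively, to Proposition 3.1 (control of the non-local renewal term) and to Propositions 3.3--3.5. Each of the latter is proved by integrating the adjoint equation along the characteristics $\gamma(\lambda)=(t-\lambda,s+t-\lambda,\lambda)$ and reducing the estimate to the pure heat equation satisfied by $w(x,\lambda)=\tilde q(x,t-\lambda,s+t-\lambda,\lambda)$. The set $\omega$ enters solely through the right-hand side $\int_{t_0}^{t_1}\int_\omega w^2$ of Proposition 3.2; the exponential weights built from $\pi_1,\pi_2$ and all the $s$- and $t$-integrations are insensitive to the geometry of $\omega$.

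Next I would perform the substitution. Applied to the reduced Neumann heat equation, the positive-measure estimate gives $\|e^{(t_1-t_0)\Delta}w_0\|_{L^2(\Omega)}\le C\int_F|w|$ for any measurable $F$ of positive measure; since $F$ has finite measure, Cauchy--Schwarz turns this into the $L^2$ bound $\|e^{(t_1-t_0)\Delta}w_0\|_{L^2(\Omega)}^2\le C^2|F|\int_F|w|^2$, which is exactly the inequality used in Proposition 3.2 with ``$\omega$ open'' relaxed to ``$\omega$ measurable.'' Feeding this into the characteristic computations leaves Proposition 3.1, and hence Theorems 3.1--3.2, intact, whence the duality of \cite{b8} yields Theorems 1.1 and 1.2 for measurable $\omega$. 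For the thin-set case one instead uses the $\int_J\sup_{x\in E}|u|$ estimate, controlling $\sup_{x\in E}|w|$ along the characteristics by the $L^\infty$ semigroup bound $(\ref{ertt})$; note that this $L^1$-in-observation form also matches directly the $L^1$ observability inequality $(\ref{lafin})$ underlying the $L^\infty$ and positivity results.

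The step I expect to be the main obstacle is the \emph{uniformity} of the new observability constant across the continuum of time windows $(t_0,t_1)$ and characteristic lines that appear in the decompositions of $Q'=A_1\cup A_2$ and of the intervals in the proofs of Propositions 3.3--3.5. In the open case Proposition 3.2 furnishes the explicit, datum-independent constant $c_1e^{c_2/(t_1-t_0)}$, and it is precisely this uniformity that permits integrating the pointwise-in-$(s,t)$ estimates into a single finite $K_T$. One must check that the positive-measure (resp. Hausdorff-content) constant can be chosen uniformly as the window ranges over the bounded families occurring here, degenerating no worse than the blow-up of the observability constant already recorded as $\delta,\kappa\to 0$ or $\eta\to T_1$; granted this, the recombination of the pieces closes the argument exactly as before.
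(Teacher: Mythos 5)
Your proposal is correct and follows essentially the same route as the paper, which disposes of this statement in a single sentence: replace the Fursikov--Imanuvilov observability inequality of Proposition 3.2 by the Burq--Moyano positive-measure estimate and repeat all preceding calculations verbatim. The details you supply --- the Cauchy--Schwarz passage from the $L^1$ observation on $F=\omega\times(t_0,t_1)$ to the $L^2$ form actually used along the characteristics, and the uniformity of the new constant over the translated time windows (which holds by time-translation invariance of the heat semigroup) --- are precisely the points the paper leaves implicit.
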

	\section{Conclusion}
	Considering a Population dynamics with age, size structuring and diffusion, we modeled the issue of null controllability and null controllability with positivity constraints.
	We have first proved a null controllability where  control is localized in the space variable as well as with respect to the age, a null controllability where control is localized in the space variable as well as with respect to the size and the third case, localized is space and localized obliquely with respect to age and size. The second result is a null controllability preserving the positivity.\\
	\textbf{Acknowledgement}\\
	The authors wish to thank Prof. Enrique Zuazua for his comments, suggestions and for fruitful discussions.\\
	The first author would like to thank Debayan Maity for fruitful discussions.\\
	\textbf{Declarations}\\
	-\textbf{Ethical Approval}\\
	No particular ethical approval to state for this article. Authors all approve to participate and publish in your journal.\\
	-\textbf{Competing interests}\\
	The authors declare that they have no conflicts of interest.\\
	-\textbf{Authors contributions}\\
	Y.S. wrote the main manuscript text and prepared figures 1-10.  All authors reviewed the manuscript.\\
	-\textbf{Funding}\\
	\begin{enumerate}
		\item This work has been supported by the European Research Council (ERC) under the European Union’s Horizon 2020 research and innovation programme (grant agreement No 694126-DYCON)
		\item The work of U.B. is supported by the Grant PID2020-112617GB-C22 KILEARN of MINECO (Spain).
	\end{enumerate}
	-\textbf{Availability of data and materials}\\
	No data and materials were used to support this study.
	
\end{document}